\Crefname{conjecture}{Conjecture}{Conjectures}
\newtheorem{thm}{Theorem}[section]
\newtheorem{cor}[thm]{Corollary}
\newtheorem{prop}[thm]{Proposition}
\newtheorem{lemma}[thm]{Lemma}
\newtheorem{example}[thm]{Example}
\newtheorem{remark}[thm]{Remark}
\newcommand{\C}{{\mathbb C}}
\newcommand{\bP}{{\mathbb{P}}}
\newcommand{\Z}{{\mathbb Z}}
\newcommand{\cS}{{\mathcal S}}
\newcommand{\fS}{{\mathfrak S}}
\newcommand{\cQ}{{\mathcal Q}}
\newcommand{\cO}{{\mathcal O}}
\newcommand{\Fl}{\mathrm{Fl}}
\newcommand{\Gr}{\mathrm{Gr}}
\DeclareMathOperator{\QH}{QH}
\DeclareMathOperator{\QK}{QK}
\DeclareMathOperator{\K}{K}
\DeclareMathOperator{\Rep}{Rep}
\DeclareMathOperator{\ch}{ch}
\begin{document}
\title[QK theory via Schur bundles]{Quantum K theory of Grassmannians, Wilson line operators, and Schur bundles}

\author{Wei Gu}
\address{Department of Physics MC 0435, 850 West Campus Drive,
Virginia Tech University, Blacksburg VA  24061
USA}
\email{weig8@vt.edu}

\author{Leonardo C.~Mihalcea}
\address{
Department of Mathematics, 
225 Stanger Street, McBryde Hall,
Virginia Tech University, 
Blacksburg, VA 24061
USA
}
\email{lmihalce@vt.edu}

\author{Eric Sharpe}
\address{Department of Physics MC 0435, 850 West Campus Drive,
Virginia Tech University, Blacksburg VA  24061
USA}
\email{ersharpe@vt.edu}

\author{Hao Zou}
\address{Beijing Institute of Mathematical Sciences and Applications, Beijing 101408, China}
\address{Yau Mathematical Sciences Center, Tsinghua University, Beijing 100084, China}
\email{hzou@vt.edu}

\subjclass[2020]{Primary 14M15, 14N35, 81T60; Secondary 05E05}
\keywords{Coulomb branch, Grassmannian, Grothendieck polynomial, 
Quantum K theory, Wilson lines.}

\begin{abstract} We prove a `Whitney' presentation, and a `Coulomb branch' presentation, for 
the torus equivariant quantum K theory of the Grassmann manifold $\Gr(k;n)$, inspired from 
physics, and stated in an earlier paper. 
The first presentation is obtained by quantum deforming 
the product of the Hirzebruch $\lambda_y$
classes of the tautological bundles. In physics, 
the $\lambda_y$ classes arise as certain Wilson line operators. The second presentation is obtained 
from the Coulomb branch equations involving the partial derivatives
of a twisted superpotential from supersymmetric gauge theory. This is closest to a presentation 
obtained by Gorbounov and Korff, utilizing integrable systems techniques. Algebraically, we relate
the Coulomb and Whitney presentations utilizing transition matrices from 
the (equivariant) Grothendieck polynomials 
to the (equivariant) complete homogeneous symmetric polynomials. Along the way, 
we calculate 
K-theoretic Gromov-Witten invariants of wedge powers of the tautological bundles on $\Gr(k;n)$, 
using the `quantum=classical' statement. \end{abstract}

\maketitle

\setcounter{tocdepth}{1}
\tableofcontents

\section{Introduction} Based on predictions inspired by physics, in \cite{Gu:2020zpg}
we conjectured presentations by generators and relations for the quantum K theory ring of 
the Grassmannian and for the Lagrangian Grassmannian.
The main goal in this paper is to provide rigorous mathematical proofs of these 
statements in the Grassmannian case, in the more general equivariant context.

Let $\Gr(k;n)$ denote the Grassmann manifold parametrizing linear subspaces of dimension $k$ in $\C^n$, and 
let $0 \to \cS \to \C^n \to \cQ \to 0$ be the tautological sequence, where $\mathrm{rk}(\cS) = k$. 
An influential result by Witten \cite{witten:verlinde} states that 
$(\QH^*(\Gr(k;n)),\star)$, the quantum cohomology ring  
of the Grassmannian, is determined by the `quantum Whitney relations':
\begin{equation}\label{E:wittenrel} c(\cS) \star c (\cQ) = c(\C^n) + (-1)^k q \/, \end{equation}
where $c(E)= 1+ c_1(E)+ \ldots + c_e(E)$ is the total Chern class of the rank $e$ bundle $E$. 
This equation leads to a presentation of
$\QH^*(\Gr(k;n))$ by generators and relations:
\begin{equation}\label{E:introQHpres} 
\QH^*(\Gr(k;n))= \frac{\Z[q][e_1(x), \ldots, e_k(x); e_1(\tilde{x}), \ldots, e_{n-k}(\tilde{x})]}{\left\langle \left(
\sum_{i=0}^k e_i(x) \right) \left(\sum_{j=0}^{n-k} e_j(\tilde{x}) \right) = 1 + (-1)^k q \right\rangle } \/. 
\end{equation}
Here $e_i(x) = e_i(x_1, \ldots , x_k), e_j(\tilde{x}) = e_j(\tilde{x}_1, \ldots , \tilde{x}_{n-k})$ 
denote the elementary symmetric polynomials, and $x_i, \tilde{x}_j$ correspond to 
the Chern roots of $\cS$, respectively $\cQ$.

Let $T$ be the torus of invertible diagonal matrices with its usual action
on $\Gr(k;n)$.~In this paper we generalize Witten's relations 
\eqref{E:wittenrel} from the quantum cohomology to the  
$T$-equivariant quantum $\K$ ring of $\Gr(k;n)$, 
defined by Givental and Lee
\cite{givental:onwdvv,givental.lee:quantum,lee:QK}.
Denote this ring by $\QK_T(\Gr(k;n))$ and by $\K_T(\Gr(k;n))$
the ordinary equivariant $\K$-theory ring.

For a vector bundle $E$, denote by 
$\lambda_y(E) = 1 + y E + \ldots +  y^e \wedge^e E$
the Hirzebruch $\lambda_y$-class in the $\K$ theory ring.
The $\K$-theoretic Whitney relations are 
\[ \lambda_y(\cS) \cdot \lambda_y(\cQ) = \lambda_y(\C^n) \quad \in \K_T(\Gr(k;n)) \/. \]
The main geometric result in this paper is a quantum deformation of these relations. 
\begin{thm}\label{thm:intro} The following equality holds in $\QK_T(\Gr(k;n))$: 
\begin{equation}\label{E:introQKWhitney}\lambda_y(\cS) \star \lambda_y(\cQ)  =  
\lambda_y(\C^n) - \frac{q}{1-q} y^{n-k} (\lambda_y(\cS) -1) \star \det \cQ \/.
\end{equation}
\end{thm}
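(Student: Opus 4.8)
\emph{Proof proposal.}
The plan is to translate \eqref{E:introQKWhitney} into identities among K-theoretic Gromov--Witten invariants, to evaluate those by the `quantum=classical' principle, and to resum the resulting $q$-series. Work over the ground ring $R=\K_T(\mathrm{pt})[y][[q]]$ (localized as needed). Recall that $\QK_T(\Gr(k;n))$ is $R$-free on the Schubert basis $\{\cO^w\}$, that the deformed pairing
\[
((a,b))\;=\;\chi\bigl(\Gr(k;n),\,a\cdot b\bigr)\;+\;\sum_{d\ge 1}q^{d}\,\chi\bigl(\Mb_{0,2}(\Gr(k;n),d),\ \ev_1^{*}a\cdot \ev_2^{*}b\bigr)
\]
is a nondegenerate deformation of the K-theoretic Euler pairing, and that if $\langle\!\langle a,b,c\rangle\!\rangle:=\sum_{d\ge 0}q^{d}\,\chi\bigl(\Mb_{0,3}(\Gr(k;n),d),\ \ev_1^{*}a\cdot\ev_2^{*}b\cdot\ev_3^{*}c\bigr)$ then $((a\star b,c))=\langle\!\langle a,b,c\rangle\!\rangle$. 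Since both sides of \eqref{E:introQKWhitney} are $R$-linear in their vector-bundle arguments, it is equivalent to prove, for every $\cO^w$,
\[
\langle\!\langle \lambda_y(\cS),\lambda_y(\cQ),\cO^w\rangle\!\rangle\;=\;((\lambda_y(\C^n),\cO^w))\;-\;\frac{q}{1-q}\,y^{n-k}\,\langle\!\langle \lambda_y(\cS)-1,\ \det\cQ,\ \cO^w\rangle\!\rangle,
\]
and for this it suffices to evaluate $\chi\bigl(\Mb_{0,3},\,\ev_1^{*}\wedge^{a}\cS\cdot\ev_2^{*}\wedge^{b}\cQ\cdot\ev_3^{*}\cO^w\bigr)$ for all $a,b$ and all $d$.

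Second, I would apply the equivariant K-theoretic `quantum=classical' statement (extended from Schubert classes to arbitrary K-classes by trilinearity): for each $d$ this invariant equals $\chi_{Y_d}\bigl(q_{*}p^{*}\wedge^{a}\cS\cdot q_{*}p^{*}\wedge^{b}\cQ\cdot q_{*}p^{*}\cO^w\bigr)$, where $Y_d=\Fl(k-d,\,k+d;\,n)$ (truncating the step indices into $[0,n]$ when $d>\min(k,n-k)$), $Z_d=\Fl(k-d,\,k,\,k+d;\,n)$, and $p\colon Z_d\to\Gr(k;n)$, $q\colon Z_d\to Y_d$ are the two projections; here $q$ is a $\Gr(d,2d)$-bundle and $q_{*}p^{*}\cO^w$ is a Schubert structure sheaf with $\chi_{Y_d}(q_{*}p^{*}\cO^w)=1$. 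Let $\mathcal{A}\subset\mathcal{B}\subset\C^n$ be the tautological chain on $Y_d$, with $\mathrm{rk}\,\mathcal{A}=k-d$ and $\mathrm{rk}\,\mathcal{B}=k+d$. Using the Grassmann-bundle projection formula and Bott vanishing on the fibers one computes $q_{*}p^{*}\lambda_y(\cS)=\lambda_y(\mathcal{A})$, but $q_{*}p^{*}\lambda_y(\cQ)$ is \emph{not} $\lambda_y(\C^n/\mathcal{A})$: it is the truncation $\sum_{b=0}^{n-k}y^{b}\wedge^{b}(\C^n/\mathcal{A})$, equivalently $\lambda_y(\C^n/\mathcal{A})$ multiplied by $q_{*}$ of the inverse $\lambda_y$-class of the universal rank-$d$ subbundle on the $\Gr(d,2d)$-fibers, whose higher cohomology is governed by Serre duality there. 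By the classical Whitney relation $\lambda_y(\mathcal{A})\cdot\lambda_y(\C^n/\mathcal{A})=\lambda_y(\C^n)$ on $Y_d$ together with $\chi_{Y_d}(q_{*}p^{*}\cO^w)=1$, the `$\lambda_y(\C^n)$'-part reassembles exactly into $((\lambda_y(\C^n),\cO^w))$, so that $((E,\cO^w))=-\sum_{d\ge 1}q^{d}\,\chi_{Y_d}\bigl(\lambda_y(\mathcal{A})\cdot\mathrm{tail}_d\cdot q_{*}p^{*}\cO^w\bigr)$, where $E:=\lambda_y(\cS)\star\lambda_y(\cQ)-\lambda_y(\C^n)$ and $\mathrm{tail}_d:=\sum_{j=1}^{d}y^{\,n-k+j}\wedge^{\,n-k+j}(\C^n/\mathcal{A})$.

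The core of the proof is the identification of $((E,\cO^w))$ with $-\frac{q}{1-q}\,y^{n-k}\,\langle\!\langle\lambda_y(\cS)-1,\det\cQ,\cO^w\rangle\!\rangle$. Applying Serre duality on $Y_d$ rewrites $\mathrm{tail}_d$ as $y^{n-k}\det(\C^n/\mathcal{A})$ times a degree-$(d-1)$ polynomial in the exterior powers of $(\C^n/\mathcal{A})^{\vee}$; one then needs a recursion expressing $\chi_{Y_d}$ of such classes through $\chi_{Y_e}$ with $e<d$, after which $\det\cQ$ and $\lambda_y(\cS)-1$ replace $\det(\C^n/\mathcal{A})$ and $\lambda_y(\mathcal{A})$ once $\langle\!\langle\lambda_y(\cS)-1,\det\cQ,\cO^w\rangle\!\rangle$ is itself re-expanded by `quantum=classical'. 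Equivalently, one wants the degree-by-degree recursion
\[
E_d-E_{d-1}\;=\;-\,y^{n-k}\,\bigl[(\lambda_y(\cS)-1)\star\det\cQ\bigr]_{d-1}\qquad(d\ge 1),
\]
whose base case is the classical product $E_1=-y^{n-k}(\lambda_y(\cS)-1)\cdot\det\cQ$; I would try to establish it either from the four-step flag variety $\Fl(k-d,\,k-d+1,\,k+d-1,\,k+d;\,n)$ dominating both $Y_d$ and $Y_{d-1}$, or from a boundary/degeneration analysis of $\Mb_{0,3}(\Gr(k;n),d)$. I expect this resummation --- together with controlling the $T$-equivariant terms and the dependence on $\cO^w$ throughout --- to be the main obstacle. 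Two checks guide the computation: for $\Gr(1;2)=\bP^1$, where $\xi\star\xi=q$ with $\xi=[\cO_{\mathrm{pt}}]$, both sides collapse to $(1+y)^2-y^2q$; and the degenerate range $d>\min(k,n-k)$, where $Y_d$ shrinks, must be treated separately but should feed the same geometric series with no new phenomena, the relevant wedge powers vanishing for rank reasons.
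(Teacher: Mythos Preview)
Your overall architecture---translate \eqref{E:introQKWhitney} into identities of KGW invariants, evaluate these via quantum=classical on $Y_d=\Fl(k-d,k+d;n)$, compute the pushforwards of $\lambda_y(\cS)$ and $\lambda_y(\cQ)$ using Kapranov's vanishing, isolate a ``tail'' term, and compare degrees $d$ and $d-1$---is exactly the paper's strategy. But your execution contains one error and misses two simplifications that make the paper's argument work. First, your formula $q_*p^*\lambda_y(\cQ)=\sum_{b=0}^{n-k}y^b\wedge^b(\C^n/\mathcal{A})$ is \emph{false} on $Y_d$: the correct answer there is $\lambda_y(\C^n/\mathcal{B})\cdot\lambda_y(\mathcal{B}/\mathcal{A})_{\le d}$, which genuinely involves $\mathcal{B}$ and is not a truncation of $\lambda_y(\C^n/\mathcal{A})$. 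Your truncation formula holds instead on $\Gr(k-d;n)$, and this is the paper's first simplification: because $(p_2)_*p_1^*\lambda_y(\cS)=\lambda_y(\cS_{k-d})$ is pulled back from $\Gr(k-d;n)$ along $pr$, part (b) of the quantum=classical theorem lets one compute the three-point invariant entirely on $\Gr(k-d;n)$, where the pushforwards are the simple truncations you wrote and the product is $\lambda_y(\C^n)-\lambda_y(\cS_{k-d})\cdot\lambda_y(\C^n/\cS_{k-d})_{>n-k}$.

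The second simplification concerns your recursion, which you correctly flag as the main obstacle. Rather than proving a recursion among $\star$-coefficients via a four-step flag, the paper proceeds in two steps. It first proves separately the ``quantum dual'' identity $(\lambda_y(\cS)-1)\star\det\cQ=(1-q)\bigl((\lambda_y(\cS)-1)\cdot\det\cQ\bigr)$, which converts the $\star$-product on the right of \eqref{E:introQKWhitney} into an ordinary K-theory product. It then proves directly the degree-$d$ KGW identity
\[
\langle\lambda_y(\cS),\lambda_y(\cQ),a\rangle_d=\lambda_y(\C^n)\langle 1,a\rangle_d-y^{n-k}\langle(\lambda_y(\cS)-1)\cdot\det\cQ,\,a\rangle_{d-1},
\]
with a \emph{two}-point invariant on the right; no further unwinding of $\star$ is needed. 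Both this identity and the quantum dual reduce to the same push-forward equation on the \emph{three}-step flag $\Fl(k-d,k-d+1,k;n)$ (not four-step), comparing $\Gr(k-d;n)$ and $\Gr(k-d+1;n)$; the key computation is that $p'_*\bigl(\lambda_y(\cS_{k-d})\cdot\lambda_y(\C^n/\cS_{k-d})_{>n-k}\bigr)$ equals $\lambda_y(\cS_{k-d+1})\cdot\lambda_y(\C^n/\cS_{k-d+1})_{\ge n-k}-y^{n-k}\wedge^{n-k}(\C^n/\cS_{k-d+1})$, proved by factoring through the $\bP^{k-d}$-bundle $\Fl(k-d,k-d+1,k;n)\to\Fl(k-d+1,k;n)$. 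Your Serre-duality manipulation of $\mathrm{tail}_d$ and boundary-of-$\Mb_{0,3}$ suggestion are unnecessary once you have this.
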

In the theorem we regard $\C^n$ as the $T$-module with weight space decomposition 
$\C^n=\C_{t_1} \oplus \ldots \oplus \C_{t_n}$. 
If we set $T_1:=e^{t_1} , \ldots , T_n:=e^{t_n}$, 
then $\K_T(pt)$ is the Laurent polynomial ring $\Z[T_1^{\pm 1}, \ldots, T_n^{\pm 1}]$, 
and $\lambda_y(\C^n) = \prod_{i=1}^n (1+y T_i)$, see~\S \ref{sec:preliminaries}.

The equality \eqref{E:introQKWhitney} may be translated into an abstract `quantum K-theoretic 
Whitney' presentation of the ring $\QK_T(\Gr(k;n))$ by generators and relations; 
this is stated in \Cref{thm:intro2} and \Cref{thm:qkpres} below. 
In this case one uses variables $X_i, \tilde{X}_j$, interpreted as 
exponentials of the Chern roots: $X_i = e^{x_i}$, $\tilde{X}_j= e^{\tilde{x}_j}$.
This naturally generalizes the classical presentations of $\K(\Gr(k;n))$ by Lascoux 
\cite{lascoux:anneau}, and of $\QH^*(\Gr(k;n))$ by Witten from \eqref{E:introQHpres} above.

In \cite{Gu:2020zpg} we conjectured a second `Coulomb branch presentation' of the ring
$\QK_T(\Gr(k;n))$, 
and in this paper we prove an equivariant generalization of it.
Let $W$ be a twisted
superpotential \cite{morrison.plesser,AHKT:mirror,Closset:2016arn,Gu:2020zpg}
arising in the study of supersymmetric gauge theory; 
cf.~\S \ref{sect:physbase} below.
The expression for $W$ depends
on the exponentials $X_1, \ldots, X_k$. 
In this context, the exterior powers $\wedge^i \cS, \wedge^j \cQ$ 
arise as certain Wilson line operators considered in the physics literature
\cite{Jockers:2018sfl,Jockers:2019lwe,Jockers:2019wjh,Ueda:2019qhg}. 
The Coulomb branch (or vacuum) equations
for $W$ are
\begin{equation}\label{E:introCoulomb} \exp\left(\frac{\partial W}{\partial \ln X_i}\right) = 1 \/, \quad 1 \le i \le k \/.\end{equation}
It is convenient to work
with the `shifted Wilson line operators', or, equivalently, with the (dual) $\K$-theoretic 
Chern roots
\begin{equation}\label{E:cov} z_i = 1 - X_i \/, \quad (1 \le i \le k) \/; \quad \zeta_j = 1 - T_j\/,  \quad (1 \le j \le n)\/. \end{equation}
The Coulomb branch equations show that $z_i$ are the roots 
of a `characteristic polynomial':   
\begin{equation} \label{eq:introchar}
f(\xi,z,\zeta,q) \: = \: 
\xi^n \: + \: \sum_{i=0}^{n-1} (-1)^{n-i} \xi^i \hat{g}_{n-i}(z,\zeta,q) \/,
\end{equation}
where the polynomials $\hat{g}_j(z,\zeta,q)$, defined in \eqref{E:gell} below,
 are symmetric both in $z_i$'s 
and $\zeta_j$'s.~Our main result in this paper is that
the relations from \Cref{thm:intro} generate the ideal
of relations of $\QK_T(\Gr(k;n))$, and, furthermore,
that the Vieta relations satisfied by the roots of the polynomial \eqref{eq:introchar} 
are algebraically equivalent to those from \Cref{thm:intro}; 
see \Cref{thm:main-result}
below. Specifically,
denote by $\tilde{z}_j = 1- \tilde{X}_j$ the K-theoretic Chern roots of the dual of $\cQ$. 
\begin{thm}\label{thm:intro2}
The following two rings give presentations by generators and relations of $\QK_T(\Gr(k;n))$:
\begin{enumerate}[label=(\alph*)]
\item (Coulomb branch presentation)
The ring 
$\widehat{\QK}_T(\Gr(k;n))$ given by
\begin{equation*}\label{E:intro-Coulomb-pres} 
{\mathrm K}_T(pt)[[q]] [e_1(z), \cdots, e_k(z), e_1(\hat{z}), \cdots, e_{n-k}(\hat{z})]
\, / \,
\langle \sum_{i+j=\ell} e_i(z) e_j(\hat{z}) - \hat{g}_{\ell}(z,\zeta,q) 
\rangle_{1 \leq \ell \leq n} \/.
\end{equation*}
{Here $\hat{z}=(\hat{z}_1, \ldots, \hat{z}_{n-k})$ and the polynomials $\hat{g}_\ell(z,\zeta,q)$ are defined 
in \eqref{E:gell}.}
\item (QK-theoretic Whitney presentation) The ring 
$\widetilde{\QK}_T(\Gr(k;n))$ given by
\begin{equation*}\label{E:intro:QWhitney-pres}
{\mathrm K}_T(pt)[[q]] [e_1(z),\cdots,e_k(z),e_1(\tilde{z}),\cdots,e_{n-k}(\tilde{z}
)]
\, / \,
\langle \sum_{i+j=\ell} e_{i}(z) e_{j}(\tilde{z}) \: - \: \tilde{g}_{\ell}
(z,\zeta,q)
\rangle_{1 \leq \ell \leq n} \/, 
\end{equation*}
and where the polynomials $\tilde{g}_\ell$ are defined in \eqref{E:tildeg2}.
\end{enumerate}
In each situation, $e_i(z)$ is sent to
$\sum_{p =0}^i (-1)^p {k -p \choose i-p} \wedge^p \cS$, for $1 \le i \le k$.
\end{thm}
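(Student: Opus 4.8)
The plan is to realize each of the two presentations as a ring that surjects onto $\QK_T(\Gr(k;n))$, and then to show that source and target are free $\K_T(pt)[[q]]$-modules of the same finite rank $\binom{n}{k}$; a surjection of free modules of equal finite rank over a commutative ring is forced to be an isomorphism (determinant trick), so this proves the theorem. We first treat the Whitney presentation (b), which is the geometric one, and deduce the Coulomb presentation (a) from an algebraic comparison of the two rings.

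To construct the map for (b), observe that the identity of \Cref{thm:intro} is really an identity among the quantum products $\wedge^a\cS\star\wedge^b\cQ$. Expressing $\lambda_y(\cS)$ and $\lambda_y(\cQ)$ through the dual K-theoretic Chern roots $z_i=1-X_i$ and $\tilde z_j=1-\tilde X_j$ — equivalently, applying the unitriangular change of basis $e_i(z)=\sum_{p}(-1)^p\binom{k-p}{i-p}\wedge^p\cS$ and its analogue for $\cQ$ — relation \eqref{E:introQKWhitney}, after the substitution $\xi=-y/(1+y)$ and division by $(1+y)^n$, becomes $\bigl(\sum_i \xi^i e_i(z)\bigr)\star\bigl(\sum_j \xi^j e_j(\tilde z)\bigr)=\sum_{\ell}\xi^\ell\,\tilde g_\ell(z,\zeta,q)$, with $\tilde g_\ell$ exactly the polynomial of \eqref{E:tildeg2}; its non-polynomial dependence on $q$ through $q/(1-q)$ is harmless since we work over $\K_T(pt)[[q]]$. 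Comparing coefficients of $\xi^\ell$ for $1\le\ell\le n$ shows the defining relations of $\widetilde{\QK}_T(\Gr(k;n))$ hold in $\QK_T(\Gr(k;n))$, so there is a $\K_T(pt)[[q]]$-algebra map $\phi\colon\widetilde{\QK}_T(\Gr(k;n))\to\QK_T(\Gr(k;n))$ with the asserted effect on $e_i(z)$ and on $e_j(\tilde z)$.

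The map $\phi$ is surjective: its image contains every $\wedge^p\cS$ (invert the binomial change of basis), and classically $\K_T(\Gr(k;n))$ is generated over $\K_T(pt)$ by $\wedge^1\cS,\dots,\wedge^k\cS$ by Lascoux's theorem; since the quantum product reduces to the classical one modulo $q$ and $\QK_T(\Gr(k;n))$ is $q$-adically complete, a successive approximation argument promotes this to generation over $\K_T(pt)[[q]]$. On the other hand $\QK_T(\Gr(k;n))$ is a free $\K_T(pt)[[q]]$-module of rank $\binom{n}{k}$ on the Schubert basis. For $\widetilde{\QK}_T(\Gr(k;n))$, reduce modulo $q$: the relations specialize to the classical K-theoretic Whitney relations, so one recovers Lascoux's presentation of $\K_T(\Gr(k;n))$, free of rank $\binom{n}{k}$ over $\K_T(pt)$. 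The first $n-k$ relations express the $e_j(\hat z)$ (resp. $e_j(\tilde z)$) through the $e_i(z)$, leaving $k$ relations in $e_1(z),\dots,e_k(z)$; the number of relations equals the expected codimension — a count already visible at $q=0$ — and the characteristic polynomial \eqref{eq:introchar}, monic in $\xi$ with the $z_i$ among its roots, exhibits the $e_i(z)$ as integral over $\K_T(pt)[q]$, so the presentation ring is module-finite over $\K_T(pt)[q]$, the $k$ relations form a regular sequence, and the ring is free of rank $\binom{n}{k}$ over $\K_T(pt)[[q]]$ after base change. Therefore $\phi$ is a surjection of free modules of equal rank, hence an isomorphism, proving (b); in passing, the standard monomials, or the Grothendieck polynomials $\mathfrak{G}_\lambda$ with $\lambda$ inside the $k\times(n-k)$ rectangle, are identified as a basis.

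Finally, (a) follows from the algebraic identification $\widehat{\QK}_T(\Gr(k;n))\cong\widetilde{\QK}_T(\Gr(k;n))$: both rings are cut out inside the same polynomial ring by $n$ relations of the shape "complete homogeneous symmetric-type polynomial $=$ prescribed polynomial in $z,\zeta,q$", and the passage from the Coulomb polynomials $\hat g_\ell$ of \eqref{E:gell} to the Whitney polynomials $\tilde g_\ell$ is implemented by the unitriangular transition matrix between the equivariant Grothendieck polynomials and the equivariant complete homogeneous symmetric polynomials; this matches the two ideals and transports $\phi$ to an isomorphism $\widehat{\QK}_T(\Gr(k;n))\xrightarrow{\sim}\QK_T(\Gr(k;n))$. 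The geometric content sits entirely in \Cref{thm:intro}; the main obstacle is algebraic and two-fold — pinning down the explicit polynomials $\tilde g_\ell,\hat g_\ell$ and proving the unitriangularity of the Grothendieck-to-complete-homogeneous transition that makes the Coulomb and Whitney ideals coincide, and checking that quantization does not change the rank, i.e.\ that after eliminating the $\hat z$- (or $\tilde z$-) variables the remaining $k$ relations over $\K_T(pt)[[q]]$ are still a regular sequence, for which the characteristic polynomial $f$ is the crucial device.
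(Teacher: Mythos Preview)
Your overall architecture matches the paper's: construct the map to $\QK_T(\Gr(k;n))$ from \Cref{thm:intro}, check it is an isomorphism by comparison at $q=0$, and then handle (a) via an algebraic comparison with (b). The genuine gap is in how you certify the map is an isomorphism.

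You want to show that the presentation ring $\widetilde{\QK}_T(\Gr(k;n))$ is itself free of rank $\binom{n}{k}$ over $\K_T(pt)[[q]]$ and then invoke the determinant trick for a surjection of free modules of equal rank. But your freeness argument does not stand. First, the characteristic polynomial \eqref{eq:introchar} is a creature of the \emph{Coulomb} presentation; invoking it to prove integrality in the \emph{Whitney} ring presupposes the identification (a)$\cong$(b) you have not yet established. Second, even granting that the $e_i(z)$ are integral and that the $k$ residual relations form a regular sequence (which you assert but do not prove), a polynomial ring modulo a regular sequence is not automatically \emph{free} over the base; you would still owe a flatness argument, and the usual graded miracle--flatness tricks are unavailable since the ring is only filtered. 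Proving freeness of the source a priori is essentially as hard as the theorem itself.

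The paper sidesteps this by never asking whether the source is free. It uses only that the \emph{target} $\QK_T(\Gr(k;n))$ is free of rank $\binom{n}{k}$ on the Schubert basis, and that the map is an isomorphism modulo $q$ (the classical presentation, \Cref{prop:KTpres}). Since $\K_T(pt)[[q]]$ is $q$-adically complete, $q$ lies in the Jacobson radical, and a Nakayama-type argument (\Cref{prop:Nakiso}) forces the map to be an isomorphism: lift a basis of $M/qM$ to $m_1,\dots,m_p\in M$; their images generate the free module $N$, hence form a basis over the integral domain, so any dependence $\sum c_i m_i=0$ maps to $\sum c_i f(m_i)=0$ and forces $c_i=0$. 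This needs only finite generation of the source, not freeness.

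For (a)$\cong$(b), ``unitriangular transition between Grothendieck and complete homogeneous polynomials'' is the right slogan but hides real work. After eliminating the auxiliary variables (\Cref{prop:Ehat_groth}, \Cref{prop:Etilde_h}), the two ideals $\widehat I$ and $\widetilde I$ live in $\K_T(pt)[[q]][e_1(z),\dots,e_k(z)]$, but their generators are \emph{not} related by an invertible linear change: there are correction column vectors $C^\zeta_{\ge n-k+2}$ and $C^{z,\zeta}_{\ge n-k+1}$ that do not cancel formally. The paper first proves only an \emph{inclusion} $\widehat I\subset\widetilde I$ (\Cref{cor:inclusion}), and this already requires the matrix identity $EA=A'E$ together with a computation (\Cref{lemma:key}) that must be carried out \emph{modulo} $\widetilde I$, using in particular the relation $e_k(X)e_{n-k}(\tilde X)\equiv(1-q)\prod_i(1-\zeta_i)$. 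The inclusion is then upgraded to equality by comparing the associated graded rings (\Cref{thm:isomorphism}). So the two ideals do coincide, but not by a bare change of basis.
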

The relations giving $\widetilde{\QK}_T(\Gr(k;n))$ are obtained from those in \Cref{thm:intro}
by the change of variables \eqref{E:cov} and $\tilde{X}_j = 1-\tilde{z}_j$. 

The two presentations satisfy some remarkable algebraic properties. In both situations,
the polynomials $\hat{g}_\ell$ and $\tilde{g}_\ell$ do not depend on $q$ if $1 \le \ell \le n-k$;
in other words, the first $n-k$ relations are `classical'. Furthermore, we may use the first 
$n-k$ relations to eliminate the variables
$\hat{z}_j,\tilde{z}_j$ for $1 \le j \le n-k$. We prove that in $\widehat{\QK}_T(\Gr(k;n))$,
\[ (-1)^\ell e_\ell(\hat{z}) = G_\ell'(z,\zeta):= \sum_{i+j = \ell} (-1)^j G_i(z) e_j(\zeta) \/, \]
where $G_j(z)$ is the Grothendieck polynomial; see \Cref{prop:Ehat_groth}. 
Note that the elimination of variables process
led us to define $G_\ell'(z,\zeta)$, which is an equivariant deformation of $G_\ell(z)$.
This is different from another equivariant deformation of $G_j(z)$, 
the factorial Grothendieck polynomial defined by McNamara 
\cite{mcnamara:factorial}, and which represents equivariant 
Schubert classes. (The latter are not symmetric in $\zeta$'s.) 
In analogy to the presentation of 
$H^*_T(\Gr(k;n))$, the Cauchy formulae calculating 
$\sum_{i+j=\ell} G_i'(z,\zeta) e_j(\zeta)$
allow us to formulate the relations in $\widehat{\QK}_T(\Gr(k;n))$ as 
expressions involving
$G'_{n-k+1}(z,\zeta), \ldots, G_n'(z,\zeta)$; see \Cref{lemma:twopres} below.  
We have not seen 
a presentation formulated naturally in terms of these polynomials,
even in the ordinary non-quantum ring $\K_T(\Gr(k;n))$.

A similar description holds for the $\QK$-Whitney presentation
$\widetilde{\QK}_T(\Gr(k;n))$, with the caveat that the 
Grothendieck polynomials $G_j(z)$, $G'_\ell(z,\zeta)$ are replaced 
by the complete homogeneous symmetric polynomials $h_j(z)$
and their equivariant versions $h_\ell'(z,\zeta) = \sum_{i+j=\ell} (-1)^j h_i(z) e_j(\zeta)$.
Geometrically, the polynomials $h_\ell'(z,\zeta)$
represent (equivariant)
$\K$-theoretic Chern classes of the tautological quotient bundle
$\cQ$. 
Analyzing the precise algebraic relationship between $G'_j(z,\zeta)$
and $h_j'(z,\zeta)$ lies at the foundation of proving the isomorphism
between the two presentations in \Cref{thm:intro2}; this is done
in \S \ref{sec:pres-iso}. 

One advantage of working with $\K$-theoretic Chern roots 
is that the leading terms of these presentations give presentations of the 
equivariant quantum cohomology ring $\QH^*_T(\Gr(k;n))$; both specialize to
Witten's presentation \eqref{E:introQHpres}. In physics terminology, 
one recovers the $2d$ limit of the theory. This is illustrated in \S \ref{sec:2dlimit}.

As expected from physics \cite{nekrasov.shatashvili}, the Coulomb branch 
equations \eqref{E:introCoulomb}
coincide with the Bethe Ansatz equations considered by Gorbounov and Korff 
\cite{Gorbounov:2014}. In their paper, Gorbounov and Korff 
utilized integrable systems and equivariant localization 
techniques to obtain another presentation of the 
equivariant quantum K ring $\QK_T(\Gr(k;n))$,
different from those in \Cref{thm:intro2}. In order to identify 
the geometric ring $\QK_T(\Gr(k;n))$ with that
given from integrable systems, they require the equivariant quantum K 
Chevalley formula (a formula   
to multiply by a Schubert divisor).~As proved in \cite{BCMP:qkchev}, this formula determines 
the ring structure.

{Aside from a finite generation statement of the `classical' Coulomb branch 
presentation (i.e., modulo $q$), used 
in the proof of \Cref{thm:isomorphism}}, our proofs are logically 
independent from results in \cite{Gorbounov:2014}.
We calculate the product $\lambda_y(\cS) \star \lambda_y(\cQ)$ from \Cref{thm:intro}
directly, based on the 
`quantum=classical' statement by Buch and one of the authors 
\cite{buch.m:qk}, applied to the Schur bundles. This requires the 
calculation of certain push-forwards of the $\K$-theory classes of 
the exterior powers 
$\wedge^i \cS, \wedge^j \cQ$. For this, we rely on sheaf cohomology vanishing 
statements obtained by Kapranov \cite{kapranov:Gr}.
We obtain rather explicit calculations of KGW invariants and quantum K products 
involving Schur bundles, which 
may be interesting in their own right. To illustrate, we prove the following 
quantum deformation of the classical equality 
$\wedge^i \cS \cdot \det(\cQ) = \wedge^{k-i} \cS^* \cdot \det(\C^n) \in \K_T(\Gr(k;n))$
(cf.~\Cref{thm:qdual}, see also \Cref{conj2:qSvee}).
\begin{thm}\label{thm:qdual-intro}The following holds in $\QK_T(\Gr(k;n))$:
\[ (\lambda_y(\cS)-1) \star \det (\cQ) = (1-q) ((\lambda_y(\cS) -1) \cdot \det (\cQ)) \/. \]
Equivalently, for any $i>0$, 
\[ \wedge^i (\cS) \star \det (\cQ) = (1-q) \wedge^{k-i}(\cS^*) \cdot \det(\C^n) \/. \]
\end{thm}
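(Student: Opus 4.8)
The plan is to deduce \Cref{thm:qdual-intro} from the `quantum=classical' technology of Buch and Mihalcea \cite{buch.m:qk}, exactly as the product $\lambda_y(\cS) \star \lambda_y(\cQ)$ in \Cref{thm:intro} is to be handled. First I would recall that, for a degree-$d$ quantum K product, the `quantum=classical' statement identifies the relevant K-theoretic Gromov--Witten invariants with ordinary pushforwards along projections from a two-step flag bundle (or, for $\Gr(k;n)$ with small $d$, from an auxiliary Grassmann bundle) built from the universal curve over the moduli space of stable maps; for the product of two classes only degrees $d=0$ and $d=1$ contribute, since $\det(\cQ)$ is (up to the equivariant constant) the class of a Schubert divisor and $\wedge^i\cS$ is a subbundle class, so their combined codimension is too small to receive contributions from $d\ge 2$. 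The $d=0$ term is just the classical product $\wedge^i(\cS)\cdot\det(\cQ)$, while the $d=1$ term must be computed as a pushforward of $\wedge^i(\cS)\boxtimes\det(\cQ)$ along the correspondence ${\Gr(k-1;n)\leftarrow \Fl(k-1,k+1;n)\to \Gr(k;n)}$ twisted appropriately, or more precisely via the incidence variety parametrizing degree-one rational curves.

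Second, the key computational input is the behaviour of the exterior powers $\wedge^i\cS$ and of $\det\cQ$ under these pushforwards. Here I would invoke the sheaf-cohomology vanishing of Kapranov \cite{kapranov:Gr} for Schur functors of $\cS$ and $\cQ$ on Grassmannians and on the flag bundles in the correspondence: this guarantees that the higher direct images vanish and that the pushforward is computed by a single Euler-characteristic/Borel--Weil--Bott type term, making the $d=1$ contribution explicit. Concretely I expect the $d=1$ term to produce, up to the factor $q$, precisely the classical class $\wedge^{k-i}(\cS^*)\cdot\det(\C^n)$ minus (or plus) a correction that, after using the classical identity $\wedge^i(\cS)\cdot\det(\cQ)=\wedge^{k-i}(\cS^*)\cdot\det(\C^n)$ in $\K_T(\Gr(k;n))$, collapses the whole expression into the stated form $(1-q)\,\wedge^{k-i}(\cS^*)\cdot\det(\C^n)$. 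The equivalence of the two displayed formulas is then the elementary observation that $\lambda_y(\cS)-1=\sum_{i\ge1}y^i\wedge^i\cS$, so the generating-function identity is just the collection of the individual identities in each degree $y^i$, combined with $\det\cQ$ being a nonzerodivisor so one may compare coefficients.

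Third, to make the $d=1$ pushforward rigorous I would set up the evaluation maps $\ev_1,\ev_2:\Mb_{0,2}(\Gr(k;n),1)\to\Gr(k;n)$ and the forgetful/stabilization structure, identify $\Mb_{0,2}(\Gr(k;n),1)$ with the (smooth, rational-singularity) incidence space of a line together with two marked points, and use that $\ev_1$ is flat with fibers again flag-bundle-like; then $(\ev_2)_*\,\ev_1^*(\wedge^i\cS\cdot\det\cQ)$ is evaluated fiberwise, and Kapranov's vanishing says the fiberwise cohomology is concentrated in degree zero with the expected rank, giving a clean K-class on $\Gr(k;n)$. An alternative, which I might run in parallel as a check, is to use the `quantum=classical' map directly in the form $\QK_T(\Gr(k;n))\hookrightarrow \K_T(\text{two-step flag bundle})$ and pull everything back to a computation among Schur bundles on a flag bundle, where the classical identity and the projection formula suffice.

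The main obstacle I anticipate is pinning down the exact form of the $d=1$ contribution: one must verify that the Borel--Weil--Bott computation for $\wedge^i\cS\otimes\det\cQ$ along the line-incidence correspondence yields exactly the class $\wedge^{k-i}(\cS^*)\otimes\det(\C^n)$ with coefficient $q$ (and no further terms), rather than some linear combination of Schur bundles that only simplifies after nontrivial K-theoretic relations. This amounts to a careful weight/dominance analysis of the relevant line bundle on the fibers of $\ev_1$ — essentially checking which Weyl chamber the shifted weight lands in — and it is where sign conventions, the precise normalization of $\det\cQ$ versus the Schubert divisor class, and the equivariant bookkeeping of $\det\C^n$ all have to be reconciled. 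Everything downstream of that identification (deriving the $\lambda_y$ form, checking the equivalence of the two statements, and matching with the $q/(1-q)$ factor appearing in \Cref{thm:intro}) is then formal.
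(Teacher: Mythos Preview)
Your proposal has a genuine gap. The codimension claim that only degrees $d=0$ and $d=1$ contribute to $\wedge^i\cS \star \det\cQ$ is unjustified: quantum K theory is not graded, so dimension arguments from quantum cohomology do not transfer. While the final answer is indeed a degree-$1$ polynomial in $q$, that is the \emph{conclusion}, not an input you may assume. Moreover, even if one somehow knew the structure constants truncate, the quantum K product is defined through the quantum K metric $(\cdot,\cdot)_{sm}$, which involves $2$-point KGW invariants in \emph{all} degrees; by \Cref{lemma:QKeq}, verifying $a\star b=c$ means checking $\sum_d q^d\langle a,b,\kappa\rangle_d=\sum_d q^d\langle c,\kappa\rangle_d$ for every $\kappa$, and both sides are genuine power series in $q$.

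The paper's proof does not restrict to $d\le 1$. It establishes, for \emph{every} $d\ge 0$ and every $a\in\K_T(\Gr(k;n))$, the telescoping identity
\[
\langle \lambda_y(\cS)-1,\det\cQ,a\rangle_d=\langle(\lambda_y(\cS)-1)\cdot\det\cQ,a\rangle_d-\langle(\lambda_y(\cS)-1)\cdot\det\cQ,a\rangle_{d-1}\,,
\]
which after summing against $q^d$ produces the factor $(1-q)$ on the nose. The left-hand side is computed via `quantum=classical' on $\Gr(k-d;n)$ (\Cref{E:LHSqdual}), and the right-hand side via a pushforward formula for $\lambda_y(\cS)\cdot\det\cQ$ (\Cref{lemma:pfGr}). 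The heart of the argument is then a comparison of the degree-$d$ and degree-$(d-1)$ pushforwards, carried out on the three-step flag manifold $\Fl(k-d,k-d+1,k;n)$; this is the identity \eqref{E:qdualfinal}, proved in \Cref{lemma:maineq} by a further projection to $\Fl(k-d+1,k;n)$ and Kapranov-type vanishing. Your instinct to use Kapranov's vanishing is correct, but the specific computation you sketch for $d=1$ (a single Borel--Weil--Bott step on the line-incidence variety) is not how the factor $(1-q)$ arises; it comes from a recursion in $d$ that you are missing entirely.
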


Of course, the $\QK$ Whitney, the Coulomb branch, and the 
presentation from \cite{Gorbounov:2014} are isomorphic.~In 
\S \ref{sec:GKpres} we provide a direct isomorphism from
the non-equivariant Coulomb branch presentation
$\widehat{\QK}(\Gr(k;n))$ to the one from \cite{Gorbounov:2014}.
\begin{footnote}{In an upcoming paper, it will 
be shown that a certain `functional relation', which determines the relations from 
\cite{Gorbounov:2014}, is equivalent to the quantum K Whitney 
relations \eqref{E:introQKWhitney}.}\end{footnote}
The final step in proving \Cref{thm:intro2} is 
to show that there are no other relations 
in the quantum $\K$ ring beyond those stated in \Cref{thm:intro}. 
In the quantum cohomology case,
a result going back to Siebert and Tian
\cite{siebert.tian:on} (see also \cite{fulton.pandh:notes}) states that
if one has a set of quantum relations such that the $q=0$ 
specialization gives the ideal of the `classical' relations, then these generate
the ideal of quantum relations. The proof utilizes the graded Nakayama lemma.
The quantum K theory ring is not graded.
Still, a similar statement holds, utilizing the ordinary Nakayama lemma
for the quantum K ring regarded as a module over the power series ring
$\QK_T(pt):= \K_T(pt)[[q]]$, {together with a finite generation
statement for completed rings; cf. ~\cite[Ex.~7.4,~p.~203]{eisenbud:CAbook} 
or \Cref{rmk:Eis-fg}.}
Such arguments are useful in studying presentations of 
more general quantum K rings, therefore 
we took the opportunity to gather in \Cref{sec:filtered} the relevant results about completions
and filtered modules. ~A second 
appendix contains a worked out example for $\QK_T(\Gr(2;5))$. 

We mention that our initial paper \cite{Gu:2020zpg} also gives physics inspired predictions for 
the quantum K theory of the Lagrangian Grassmannian $\mathrm{LG}(n;2n)$; a mathematical
followup analyzing this situation will be considered elsewhere.

{\em Acknowledgments.} We would like to thank Dave Anderson, Vassily Gorbounov, Takeshi Ikeda,
Christian Korff, Yaoxiong Wen, and Ming Zhang for useful discussions.~Special 
thanks are due to Prof.~Satoshi Naito, whose remarks helped us realize that the hypothesis that $R$ is
complete, or is localized, needs to be added to an earlier ar$\chi$iv version of \Cref{cor:gen}.
W.G. was partially supported by NSF grant PHY-1720321.
This material is partly based upon work supported by the National Science Foundation 
under Grant No.~DMS-1929284 while LM was in residence at the Institute for Computational and 
Experimental Research in Mathematics in Providence, RI, during the 
Combinatorial Algebraic Geometry program. LM was also supported in part by the NSF grant 
DMS-2152294 and a Simons Collaboration Grant. 
E.S. was partially supported by NSF grant
PHY-2014086.

Throughout this project we utilized the Maple based program
\texttt{Equivariant Schubert Calculator}, written by Anders Buch.\begin{footnote}
{The program is available at \texttt{https://sites.math.rutgers.edu/$\sim$asbuch/equivcalc/}}\end{footnote}

\section{Preliminaries on equivariant K theory}\label{sec:preliminaries} In this section we recall some basic facts about the equivariant 
K-theory of a variety with a group action. For an introduction to equivariant K theory, and more details, see \cite{chriss2009representation}. 

Let $X$ be a smooth projective variety with an action of a linear algebraic group $G$. The equivariant K theory ring $\K_G(X)$ is the Grothendieck ring generated by symbols $[E]$, where $E \to X$ is an $G$-equivariant vector bundle, modulo the relations $[E]=[E_1]+[E_2]$ for any short exact sequence $0 \to E_1 \to E \to E_2 \to 0$ of equivariant vector bundles. The additive ring structure is given by direct sum, and the multiplication is given by tensor products of vector bundles. 
Since $X$ is smooth, any $G$-linearized coherent sheaf has a finite resolution by (equivariant) vector bundles, and the ring $\K_G(X)$ coincides with the Grothendieck group of $G$-linearized coherent sheaves on $X$. In particular, any 
$G$-linearized coherent sheaf $\mathcal{F}$ on $X$ determines a class $[\mathcal{F}] \in \K_G(X)$. 
An important special case is if $\Omega \subset X$ is a $G$-stable subscheme; then its structure sheaf 
determines a class $[\cO_\Omega] \in \K_G(X)$.

The ring $\K_G(X)$ is an algebra over $\K_G(pt) = \Rep(G)$, the representation ring of 
$G$. If $G=T$ is a complex torus, then this is the Laurent polynomial ring 
$\K_T(pt) = \Z[T_1^{\pm 1}, \ldots, T_n^{\pm 1}]$ where 
$T_i:=e^{t_i}$ are characters corresponding to a basis of the 
Lie algebra of $T$. 

The (Hirzebruch) $\lambda_y$ class is defined by 
\[ \lambda_y(E) := 1 + y [E] + y^2 [\wedge^2 E] + \ldots + y^e  [\wedge^e E] \in \K_G(X)[y] \/. \] 
This class was introduced by Hirzebruch \cite{hirzebruch:topological} in order to help with the 
formalism of the Grothendieck-Riemann-Roch theorem. It may be thought as the 
K theoretic analogue of the (cohomological) Chern polynomial 
\[ c_y(E)= 1+ c_1(E) y + \ldots + c_e(E) y^e\] of the bundle $E$. 
The $\lambda_y$ class is multiplicative with respect to short exact sequences, i.e. if 
\[ \begin{tikzcd} 0 \arrow[r] & E_1 \arrow[r] & E_2 \arrow[r] & E_3 \arrow[r] & 0 \end{tikzcd} \] 
is such a sequence of vector bundles then 
\[ \lambda_y(E_2) = \lambda_y(E_1) \cdot \lambda_y(E_3) \/; \] 
cf.~\cite{hirzebruch:topological}. 
A particular case of this construction is when $V$ is a (complex) vector space with an action 
of a complex torus $T$, 
and with weight decomposition 
$V = \oplus_i V_{\mu_i}$, where each $\mu_i$ is a weight in the dual of the Lie algebra of $T$. 
The {\em character} of $V$ is the element $\ch_T(V):=\sum_i \dim V_{\mu_i} e^{\mu_i}$, regarded  in $\K_T(pt)$.
The  $\lambda_y$ class of $V$ is the element 
$\lambda_y(V) =\sum_{i \ge 0} y^i ch(\wedge^i V) \in \K_T(pt)[y]$.
From the multiplicativity property of the $\lambda_y$ class it follows that 
\[\lambda_y(V) = \prod_i (1+y e^{\mu_i})^{\dim V_{\mu_i}} \/;\] 
see \cite{hirzebruch:topological}.

Since $X$ is proper, the push-forward to a point equals the Euler 
characteristic, or, equivalently, the virtual representation, 
\[\chi(X, \mathcal{F})= \int_X [\mathcal{F}] := \sum_i (-1)^i \ch_T H^i(X, \mathcal{F}) \/. \] 
For $E,F$ equivariant vector bundles, this gives a pairing 
\[ \langle - , - \rangle :\K_G(X) \otimes \K_G(X) \to \K_G(pt); \quad
\langle [E], [F] \rangle := \int_X E \otimes F = \chi(X, E \otimes F) \/. \]

\section{Equivariant K theory of flag manifolds} 
In this paper we will utilize the torus equivariant K-theory of the partial flag 
manifolds $\Fl(i_1, i_2, \ldots, i_k;n)$, which parametrize partial flags 
$F_{i_1} \subset \ldots \subset F_{i_p} \subset \C^n$, where $\dim F_i = i$, to study
the equivariant quantum K theory ring of Grassmann manifolds $\Gr(k;n)$.
The goal of this section is to review some of the basic features of the equivariant
K rings of the partial flag manifolds. Of special importance is the calculations of K theoretic 
push forwards of Schur bundles, which will play a key role later in the paper. 
 
\subsection{Basic definitions} The flag manifold $\Fl(i_1, i_2, \ldots, i_k;n)$ is an algebraic variety
homogeneous under the action of $\mathrm{GL}_n:=\mathrm{GL}_n(\C)$. 
Let $T \subset \mathrm{GL}_n$ be the subgroup
of diagonal matrices acting coordinate-wise on $\C^n$. Denote by 
$T_i \in \K_T(pt)$ the weights of this action.

Set $W$ to be the symmetric group in $n$ letters, and let $W_{i_1, \ldots, i_p} \le W$ be 
the subgroup generated by simple reflections $s_i = (i,i+1)$ where $i \notin \{ i_1, \ldots, i_p\}$. 
Denote by $\ell:W \to \mathbb{N}$ the length function, and by  
$W^{i_1, \ldots, i_k}$ the set of minimal length representatives of $W/W_{i_1, \ldots, i_p}$. This consists
of permutations $w \in W$ which have descents at positions $i_1, \ldots, i_p$, i.e., $w(i_k+1)< \ldots <w(i_{k+1})$, for $k=0, \ldots, p$, with the convention that $i_0=1, i_{p+1} =n$. 

The torus fixed points $e_w \in \Fl(i_1, i_2, \ldots, i_k;n)$ are indexed by the permutations 
$w \in W^{i_1, \ldots, i_p}$. For each such point, the {\em Schubert cell} 
$X_{w}^\circ \subset \Fl(i_1, i_2, \ldots, i_k;n)$ is the orbit $B^-. e_w$ of the Borel subgroup of lower triangular matrices, and the {\em Schubert variety} is the closure $X_w = \overline{X^{w,\circ}}$. Our convention ensures that $X_w$ has codimension $\ell(w)$. The Schubert varieties $X^w$ determine classes $\mathcal{O}_w:= [\mathcal{O}_{X_w}]$ 
in $\K_T(X)$. Similarly, the $S$-fixed points give classes denoted by 
$\iota_w:= [\mathcal{O}_{e_w}]$. The equivariant K-theory $\K_T(\Fl(i_1, \ldots, i_p;n))$ is a free module over 
$\K_T(pt)$ with a basis given by Schubert classes $\{\mathcal{O}_w \}_{w \in W^{i_1, \ldots, i_p}}$. 
For the Grassmannian $\Gr(k;n)$, the Schubert varieties are indexed by partitions 
$\lambda= (\lambda_1, \ldots, \lambda_k)$ included in 
the $k \times (n-k) $ rectangle, i.e., $n-k \ge \lambda_1 \ge \ldots \ge \lambda_k \ge 0$. For Grassmanians,
we denote the Schubert variety by $X_\lambda$; this has codimension $|\lambda| = \lambda_1 + \ldots + \lambda_k$.
The corresponding class in $\K_T(\Gr(k;n))$ is denoted $\cO_\lambda:= [\cO_{X_\lambda}]$. 

Let ${\bf i} = (1 < i_1<  \ldots <i_p < n)$ and ${\bf j}$ obtained from 
${\bf{i}}$ by removing some of the indices $i_s$. Denote by $\Fl({\bf i})$ and $\Fl({\bf j})$ the 
respective flag manifolds and by $\pi_{{\bf i},{\bf j}}: \Fl({\bf i}) \to \Fl({\bf j})$ the natural projection. 
We will need the following well known fact:
\begin{lemma}\label{lemma:proj} Let $w \in W^{{\bf i}}$. Then for any Schubert varieties
$X_w \in \Fl({\bf i})$ and $X_v \in \Fl({\bf j})$
\[ (\pi_{{\bf i},{\bf j}})_*\mathcal{O}_w = \mathcal{O}_{w'} \in K_T(\Fl({\bf j})) \/, \textrm{ and } 
\pi_{{\bf i},{\bf j}}^*\mathcal{O}_v = \mathcal{O}_{v} \in K_T(\Fl({\bf i})) \]
where $w' \in W^{{\bf j}}$ is the minimal length representative for $w \in W/W_{{\bf j}}$.
\end{lemma}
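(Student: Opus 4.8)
\textbf{Proof proposal for Lemma \ref{lemma:proj}.}

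The plan is to split the statement into its two halves and treat them independently. For the pull-back formula $\pi_{{\bf i},{\bf j}}^*\mathcal{O}_v = \mathcal{O}_v$, the key observation is that the Schubert variety $X_v \subset \Fl({\bf j})$ is $B^-$-stable, hence so is its preimage $\pi_{{\bf i},{\bf j}}^{-1}(X_v)$; moreover, since $\pi_{{\bf i},{\bf j}}$ is a locally trivial fibration with smooth (partial flag) fibers, this preimage is the Schubert variety $X_v \subset \Fl({\bf i})$ indexed by the same minimal-length representative $v$ (which lies in $W^{{\bf i}}$ as well, since removing descents only enlarges the allowed descent set). The scheme-theoretic preimage is reduced because $\pi_{{\bf i},{\bf j}}$ is flat with reduced fibers, so $\pi_{{\bf i},{\bf j}}^*[\mathcal{O}_{X_v}] = [\mathcal{O}_{\pi^{-1}(X_v)}] = [\mathcal{O}_{X_v}]$. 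First I would isolate this flatness/reducedness point and cite the standard fibration structure of projections between flag manifolds.

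For the push-forward formula, the heart of the matter is a cohomology-vanishing and connectedness-of-fibers argument. Given $w \in W^{{\bf i}}$, the restricted map $\pi_{{\bf i},{\bf j}}|_{X_w} : X_w \to \Fl({\bf j})$ has image a $B^-$-stable irreducible closed subvariety, hence a Schubert variety $X_{w'}$, where $w'$ is the minimal-length representative of the coset $wW_{{\bf j}}$; this is a combinatorial bookkeeping step using that $W_{{\bf i}} \subseteq W_{{\bf j}}$. Then I would invoke the rational-singularities / normality of Schubert varieties together with the fact that the generic fiber (and in fact every fiber, by the Frobenius-splitting or Demazure-type arguments of the standard references) of $\pi_{{\bf i},{\bf j}}|_{X_w}$ over $X_{w'}$ is itself a Schubert variety in the relevant fiber flag manifold, with vanishing higher cohomology of its structure sheaf and with $H^0 = \C$. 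By cohomology and base change this gives $R^q(\pi_{{\bf i},{\bf j}}|_{X_w})_* \mathcal{O}_{X_w} = 0$ for $q > 0$ and $(\pi_{{\bf i},{\bf j}}|_{X_w})_* \mathcal{O}_{X_w} = \mathcal{O}_{X_{w'}}$, whence $(\pi_{{\bf i},{\bf j}})_* \mathcal{O}_w = \mathcal{O}_{w'}$ in $\K_T(\Fl({\bf j}))$, equivariantly since the whole picture is $T$-equivariant.

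The main obstacle I anticipate is the cohomology-vanishing input: one needs that $\pi_{{\bf i},{\bf j}}|_{X_w}$ has fibers with trivial structure-sheaf cohomology in positive degree and one-dimensional $H^0$. Rather than reproving this, I would deduce it from the known statements that Schubert varieties have rational singularities and that the Bott--Samelson-type resolutions are compatible with these projections (references such as Brion--Lakshmibai, Kumar's book, or the original work of Ramanan--Ramanathan / Mehta--Srinivas on Frobenius splitting), which yield exactly that the derived push-forward of a Schubert structure sheaf along a $G/P \to G/P'$ projection is again a Schubert structure sheaf with no higher direct images. Everything else — the identification of $w'$, the reducedness of preimages, and the passage from sheaf-level identities to $\K_T$ — is then formal. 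Since this is a "well known" lemma, I would keep the write-up brief, citing the standard sources for the vanishing and normality statements and only spelling out the combinatorial identification of $w'$ as the minimal representative of $w W_{{\bf j}}$.
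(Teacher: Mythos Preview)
Your proposal is correct and follows essentially the same approach as the paper: the paper proves the push-forward by a direct citation to \cite[Thm.~3.3.4(a)]{brion.kumar:frobenius} (which packages precisely the rational-singularities/Frobenius-splitting vanishing you sketch), and proves the pull-back in one line by invoking flatness of $\pi_{{\bf i},{\bf j}}$. Your write-up simply unpacks the content of that citation and spells out why flatness together with reducedness of fibers identifies the scheme-theoretic preimage with the correct Schubert variety.
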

\begin{proof} The first equality follows from \cite[Thm. 3.3.4(a)]{brion.kumar:frobenius} and the second 
because $\pi_{{\bf i},{\bf j}}$ is a flat morphism.\end{proof}

\subsection{Push-forward formulae of Schur bundles} Next we recall some results about cohomology of 
Schur bundles on Grassmannians. Our main reference is Kapranov's paper \cite{kapranov:Gr}. A reference
for basic definitions of Schur bundles is Weyman's book \cite{weyman}.

Recall that if $X$ is a $T$-variety, $\pi:E \to X$ is any $T$-equivariant vector bundle of rank $e$, 
and $\lambda=(\lambda_1, \ldots, \lambda_k)$ 
is a partition with at most $e$ parts, the {\em Schur bundle} $\mathfrak{S}_\lambda(E)$ is a 
$T$-equivariant vector bundle over $X$. It has the property
that if $x \in X$ is a $T$-fixed point, the fibre $(\fS_\lambda(E))_x$ 
is the $T$-module with character the Schur function $s_\lambda$. For example,
if $\lambda = (1^k)$, then $\fS_{(1^k)}(E) = \wedge^k E$, and if $\lambda = (k)$ then
$\fS_{(k)}(E)=\mathrm{Sym}^k(E)$.

In this paper $X=\Gr(k;\C^n)$ with the $T$-action restricted from
$\mathrm{GL}_n(\C)$. 
To emphasize the $T = (\C^*)^n$-module structure on
$\C^n$, we will occasionally denote by $V:=\C^n$ and
by $\Gr(k;V)$ the corresponding Grassmannian.
Further, $V$ will also be identified with the trivial, 
but not equivariantly trivial, vector bundle $\Gr(k;V) \times V$.
The following was proved in \cite{kapranov:Gr}.
\begin{prop}[Kapranov]\label{lemma:BWB} Consider the 
Grassmannian $\Gr(k;V)$ with the tautological sequence 
$0 \to \cS \to V \to \cQ \to 0$. 
For any nonempty partition {$\lambda= (\lambda_1\geq \lambda_2 \geq \ldots \geq \lambda_k \geq 0)$ such that 
$\lambda_1 \le n-k$}, 
there are the following isomorphisms of $T$-modules:

(a) For all $i \ge 0$, $H^i(\Gr(k;V), \fS_\lambda(\cS)) = 0$.

(b) \[ H^i(\Gr(k;V), \fS_\lambda(\mathcal{S}^*)) = \begin{cases} \fS_\lambda(V^*) & i=0 \\ 0 & i>0 \/. \end{cases}\]

(c) \[ H^i(\Gr(k;V), \fS_\lambda(\cQ)) = \begin{cases} \fS_\lambda(V) & i=0 \\ 0 & i>0 \/. \end{cases}\]

\end{prop}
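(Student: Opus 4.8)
The plan is to prove Proposition \ref{lemma:BWB} using the Borel--Weil--Bott theorem applied fibrewise along a suitable tower of projective bundles, in the spirit of Kapranov's original argument. Recall that $\Gr(k;V)$ can be realized as a tower of projectivized bundles: write $\Fl(1,2,\ldots,k;V) \to \Gr(k;V)$ for the flag bundle of full flags inside the tautological $\cS$, and note $\Fl(1,2,\ldots,k;V)$ is itself an iterated $\bP$-bundle over $\Gr(k;V)$. The key observation is that the Schur bundle $\fS_\lambda(\cS)$ (resp.\ $\fS_\lambda(\cS^*)$, $\fS_\lambda(\cQ)$) arises as a pushforward along this flag bundle of an explicit line bundle built from the tautological quotient line bundles, by the classical description of Schur functors via the relative Borel--Weil theorem. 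Since these pushforwards of line bundles along the flag bundle are computed by Bott's theorem applied fibrewise, and the higher direct images vanish in the relevant dominant-weight ranges, the cohomology of $\fS_\lambda(\cS)$ on $\Gr(k;V)$ coincides with the cohomology on $\Fl(1,\ldots,k;V)$ of the corresponding line bundle.

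First I would set up the $\bP$-bundle tower explicitly and record the Leray--Hirsch / projection-formula identities expressing $R\pi_* $ of a tensor product of relative $\cO(1)$'s in terms of Schur functors; this is where the partition $\lambda$ and the rank bound $\lambda_1 \le n-k$ (equivalently, $\lambda$ fits in the $k \times (n-k)$ rectangle) will enter. Second, for part (a), I would argue that the relevant line bundle on the full flag manifold $\Fl(V)$ (pulling back all the way) has a weight that, after the $\rho$-shift, lies on a wall or in a chamber forcing all cohomology to vanish — this is precisely the statement that $\fS_\lambda(\cS)$ has no cohomology in any degree, reflecting that $\cS$ is a ``negative'' bundle and $\lambda$ nonempty. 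Third, for parts (b) and (c), the dual and the quotient produce line bundles whose shifted weights are dominant, so Bott gives cohomology concentrated in degree zero, equal to the $\mathrm{GL}_n$-irreducible $\fS_\lambda(V^*)$ resp.\ $\fS_\lambda(V)$; here I would use that $\cS^*$ and $\cQ$ are globally generated and that $V = \Gr(k;V)\times V$ so its cohomology is just $V$ placed in degree zero, with the Schur functor commuting with taking global sections in this vanishing situation.

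The $T$-equivariant refinement is essentially automatic: every bundle, every map in the tower, and every cohomology group carries a compatible $\mathrm{GL}_n$-action (hence a $T$-action), so the identifications of cohomology groups are identifications of $T$-modules, and the characters are read off from the $\mathrm{GL}_n$-structure. I would simply remark that all constructions are $\mathrm{GL}_n$-equivariant and invoke the $\mathrm{GL}_n$-equivariant Borel--Weil--Bott theorem rather than its non-equivariant shadow.

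The main obstacle, and the step I would spend the most care on, is the bookkeeping of weights and the $\rho$-shift across the iterated $\bP$-bundle so as to verify that, in part (a), the shifted weight genuinely hits a wall of the Weyl chamber for \emph{every} nonempty $\lambda$ with $\lambda_1 \le n-k$ — and, symmetrically, that in (b) and (c) the shifted weights are strictly dominant so only $H^0$ survives. Rather than reprove Bott from scratch, I would cite Kapranov \cite{kapranov:Gr} (and Weyman \cite{weyman} for the Schur-functor/line-bundle dictionary) for the non-equivariant statement and then observe that the argument is manifestly $\mathrm{GL}_n$-equivariant, so no new input is needed beyond tracking the $T$-action through an argument already in the literature. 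If a fully self-contained treatment were desired, the genuine work would be the combinatorial weight computation just described.
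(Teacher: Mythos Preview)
Your proposal is correct and, for parts (a) and (b), is essentially identical to the paper's treatment: the paper simply cites \cite[Prop.~2.2]{kapranov:Gr} as a consequence of Borel--Weil--Bott on the complete flag manifold, and notes that the argument is $T$-equivariant --- exactly the strategy you outline (and you even propose the same citation).

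The one genuine difference is in part (c). You propose to handle $\fS_\lambda(\cQ)$ directly via the same Borel--Weil--Bott weight analysis, arguing that the relevant shifted weight is dominant. The paper instead uses a one-line duality trick: the $T$-equivariant isomorphism $\Gr(k;V) \simeq \Gr(n-k;V^*)$ sends $\cQ$ to $\cS^*$, so (c) is immediately reduced to (b) already established. Your direct route works but requires redoing the weight bookkeeping; the paper's duality avoids this entirely and is shorter. Conversely, your approach has the minor advantage of being self-contained and not requiring one to check that the Grassmannian duality is $T$-equivariant in the needed sense.
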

\begin{proof} Parts (a) and (b) were proved in \cite[Prop. 2.2]{kapranov:Gr}, as a consequence of
the Borel-Weil-Bott theorem on the complete flag manifold.
For part (c), observe that there is a $T$-equivariant isomorphism 
$\Gr(k;V) \simeq \Gr(\dim V - k; V^*)$ 
under which the $T$-equivariant bundle $\cQ$ is sent to $\cS^*$. Then part (c) follows from (b).
\end{proof} 

We also need a relative version of \Cref{lemma:BWB}. Consider an $T$-variety $X$ equipped with 
an $T$-equivariant
vector bundle $\mathcal{V}$ of rank $n$. Denote by 
$\pi: \mathbb{G}(k,\mathcal{V}) \to X$ the Grassmann bundle over $X$. It is equipped with a 
tautological sequence
$0 \to \underline{\cS} \to \pi^* \mathcal{V} \to \underline{\cQ} \to 0$ over $\mathbb{G}(k,\mathcal{V})$. The following corollary follows from \Cref{lemma:BWB},
using that $\pi$ is a $T$-equivariant locally trivial fibration:

\begin{cor}\label{cor:relativeBWB}
For any nonempty partition {$\lambda= (\lambda_1\geq \lambda_2 \geq \ldots \geq \lambda_k \geq 0)$ such that 
$\lambda_1 \le n-k$}, there are the following isomorphisms of $T$-modules:

(a) For all $i \ge 0$, the higher direct images, $R^i \pi_* \fS_\lambda(\underline{\cS}) = 0$. 

(b) \[ R^i \pi_*\fS_\lambda(\underline{\cS}^*)) = \begin{cases} \fS_\lambda(\mathcal{V}^*) & i=0 \\ 0 & i>0 \/. \end{cases}\]

(c) \[ R^i \pi_*\fS_\lambda(\underline{\cQ}) = \begin{cases} \fS_\lambda(\mathcal{V}) & i=0 \\ 0 & i>0 \/. \end{cases}\]
\end{cor}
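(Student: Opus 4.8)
\textbf{Proof proposal for \Cref{cor:relativeBWB}.}
The plan is to deduce the relative statement from the absolute one in \Cref{lemma:BWB} by a local-triviality argument, reducing cohomology along $\pi:\G(k,\mathcal{V})\to X$ to fibrewise cohomology on honest Grassmannians $\Gr(k;n)$. First I would fix a $T$-stable affine open cover $\{U_\alpha\}$ of $X$ over which $\mathcal{V}$ trivializes $T$-equivariantly (such a cover exists after passing, if necessary, to the standard Zariski-local statement and using that $T$-linearizations glue; on each $U_\alpha$ one has $\mathcal{V}|_{U_\alpha}\cong U_\alpha\times V$ as $T$-bundles up to a possible twist by a character, which does not affect vanishing). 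Over such $U_\alpha$ the Grassmann bundle restricts to $U_\alpha\times\Gr(k;V)$, the tautological sequence $0\to\underline{\cS}\to\pi^*\mathcal{V}\to\underline{\cQ}\to0$ restricts to the pullback of $0\to\cS\to V\to\cQ\to0$, and hence $\fS_\lambda(\underline{\cS})|_{\pi^{-1}U_\alpha}$ is the external pullback of $\fS_\lambda(\cS)$ on $\Gr(k;V)$.

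The key step is then the base-change/Künneth identification: for the projection $p:U_\alpha\times\Gr(k;V)\to U_\alpha$ and any vector bundle $F$ on $\Gr(k;V)$, one has $R^i p_*(\mathrm{pr}_2^*F)\cong H^i(\Gr(k;V),F)\otimes_{\C}\cO_{U_\alpha}$ as $T$-equivariant sheaves, because $\Gr(k;V)$ is proper and the situation is a trivial product. Applying this with $F=\fS_\lambda(\cS)$ and invoking \Cref{lemma:BWB}(a) gives $R^i\pi_*\fS_\lambda(\underline{\cS})|_{U_\alpha}=0$ for all $i$; since this holds on each member of the cover, the sheaf $R^i\pi_*\fS_\lambda(\underline{\cS})$ vanishes, proving (a). For (b) and (c) the same argument with $F=\fS_\lambda(\cS^*)$, resp.\ $F=\fS_\lambda(\cQ)$, shows $R^i\pi_*$ vanishes for $i>0$ and that $\pi_*\fS_\lambda(\underline{\cS}^*)$, resp.\ $\pi_*\fS_\lambda(\underline{\cQ})$, is locally free; the local expressions $\fS_\lambda(V^*)\otimes\cO_{U_\alpha}$, resp.\ $\fS_\lambda(V)\otimes\cO_{U_\alpha}$, glue — again using the functoriality of the Schur construction and of base change — to the global bundles $\fS_\lambda(\mathcal{V}^*)$, resp.\ $\fS_\lambda(\mathcal{V})$, on $X$.

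The main obstacle I anticipate is bookkeeping the $T$-equivariant structure through the gluing in parts (b) and (c): one must check that the cocycle data identifying $R^0\pi_*\fS_\lambda(\underline{\cQ})$ with $\fS_\lambda(\mathcal{V})$ is $T$-equivariant, not merely an isomorphism of underlying sheaves. This follows formally from the naturality of cohomology and base change with respect to the $T$-action (equivalently, one can run the whole argument in the category of $T$-equivariant sheaves from the start, using a $T$-equivariant trivializing cover), but it is the point that deserves care. A cleaner alternative, which I would mention, is to avoid the cover entirely: since $\pi$ is a $T$-equivariant \emph{Zariski}-locally trivial fibration with proper fibre $\Gr(k;V)$, cohomology-and-base-change (the fibre dimensions $h^i$ are constant by \Cref{lemma:BWB}, in fact zero or matching a fixed Schur module) shows directly that $R^i\pi_*$ of each Schur bundle is locally free of the predicted rank and commutes with arbitrary base change, and evaluating at a point of $X$ pins down the answer via \Cref{lemma:BWB}.
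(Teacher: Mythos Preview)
Your proposal is correct and follows exactly the approach the paper takes: the paper simply asserts that the corollary follows from \Cref{lemma:BWB} together with the fact that $\pi$ is a $T$-equivariant locally trivial fibration, without spelling out the base-change and gluing details you supply. Your writeup is in fact considerably more thorough than the paper's one-line justification, and the equivariant bookkeeping you flag is handled (implicitly) by working in the $T$-equivariant category throughout, as you suggest.
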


\section{Quantum K theory and `quantum=classical'}
\subsection{Definitions and notation} The quantum K ring was defined by Givental and Lee \cite{givental:onwdvv, lee:QK}. 
We recall 
the definition below, following \cite{givental:onwdvv}.~The {\em quantum K metric} 
is a deformation of the usual K-theory pairing; we recall the definition of this metric for $X= \Gr(k;n)$. 
Fix any basis $(a_\lambda)$ of $\K_T(\Gr(k;n))$ over $\K_T(pt)$, 
where $\lambda$ varies over partitions in the $k \times (n-k)$ rectangle.
The {\em (small) quantum K metric} is defined by 
\begin{equation}\label{E:qKmetric} (a_\lambda , a_\mu)_{sm} 
= \sum_{d \ge 0} q^d \langle a_\lambda, a_\mu \rangle_d \quad \in \K_T(pt)[[q]]\/,\end{equation} 
extended by $K_T(pt)[[q]]$-linearity. The elements 
$\langle a_\lambda, a_\mu \rangle_d \in \K_T(pt)$ denote
$2$-point (genus $0$, equivariant) K-theoretic Gromov-Witten (KGW) invariants if $d>0$; if $d=0$ then 
$\langle a_\lambda, a_\mu \rangle_0= \langle a_\lambda, a_\mu\rangle$ are given 
by the ordinary K-theory pairing. The ($n$-point, genus $0$) KGW invariants 
$\langle a_1, \ldots, a_n \rangle_d \in \K_T(pt)$
are defined by pulling
back via the evaluation maps, then integrating, over the Kontsevich moduli space of stable maps 
$\overline{\mathcal{M}}_{0,n}(\Gr(k;n),d)$. Instead of recalling the precise definition of the KGW invariants, 
in \Cref{thm:q=cl} below
we give a `quantum=classical' statement calculating $2$ and $3$-point KGW invariants of Grassmannians.  
Explicit combinatorial formulae for the $2$-point KGW invariants for any homogeneous space 
may be found in \cite{buch.m:nbhds,BCLM:euler}.

\begin{example} Consider the projective plane $\bP^2$, and for simplicity work non-equivariantly. 
Consider the Schubert basis $\cO_0= [\cO_{\bP^2}], \cO_1= [\cO_{line}], \cO_2=[\cO_{pt}]$. 
The classical K-theory pairing gives the matrix 
\[ (\langle \cO_i, \cO_j \rangle) = \begin{pmatrix} 1 & 1 & 1 \\ 1 & 1 & 0 \\ 1 & 0 & 0 \end{pmatrix} \] 
For any $i,j \ge 0$ and $d >0$, $\langle \cO_i, \cO_j \rangle_d =1$. Then the quantum K metric is given by
\[ (\cO_i, \cO_j )_{sm} = \langle \cO_i, \cO_j \rangle + q+q^2 + ... = \langle \cO_i, \cO_j \rangle + \frac{q}{1-q} \/.\] 
\end{example} 

\begin{thm}[Givental \cite{givental:onwdvv}] Consider the $\K_T[pt][[q]]$-module 
\[ \QK_T(\Gr(k;n)):= \K_T(\Gr(k;n)) \otimes \K_T(pt)[[q]] \/. \]
Define the (small) equivariant quantum K product $\star$ on $\QK_T(\Gr(k;n))$ by the equality
\begin{equation}\label{E:QKdefn}(a \star b, c)_{sm} = \sum_{d \ge 0} q^d \langle a,b,c \rangle_d \/,\end{equation}
for any $a, b, c \in \K_T(\Gr(k;n))$. Then $(\QK_T(\Gr(k;n)), +,\star)$ is a commutative, associative $\K_T(pt)[[q]]$-algebra. Furthermore, the small quantum K metric
gives it a structure of 
a Frobenius algebra, i.e. $(a \star b, c)_{sm} = (a,b \star c)_{sm} $.
\end{thm}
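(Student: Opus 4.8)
The plan is to follow Givental's argument \cite{givental:onwdvv}, carried out over the equivariant coefficient ring. The first task is to check that the small quantum K metric $(-,-)_{sm}$ from \eqref{E:qKmetric} is a well-defined, symmetric, nondegenerate $\K_T(pt)[[q]]$-bilinear form on $\QK_T(\Gr(k;n))$. Each $2$-point invariant $\langle a_\lambda,a_\mu\rangle_d$ lies in $\K_T(pt)$, so the sum over $d$ is a genuine element of $\K_T(pt)[[q]]$; symmetry comes from the action permuting the two marked points of $\overline{\mathcal{M}}_{0,2}(\Gr(k;n),d)$, and $\K_T(pt)[[q]]$-bilinearity is built into the definition. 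For nondegeneracy, observe that modulo $q$ the metric specializes to the ordinary K-theory pairing $\langle-,-\rangle$, whose Gram matrix in the Schubert basis $\{\cO_\lambda\}$ is invertible over $\K_T(pt)$ (this basis has an explicit dual basis given by ideal sheaves of opposite Schubert varieties). A square matrix over the $q$-adically complete ring $\K_T(pt)[[q]]$ that is invertible modulo $\langle q\rangle$ is invertible, so $(-,-)_{sm}$ admits a dual basis $(a^\mu)$ with entries in $\K_T(pt)[[q]]$.

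Granting this, the product $\star$ is well defined: \eqref{E:QKdefn} has the unique solution $a\star b=\sum_\mu\bigl(\sum_{d\ge 0}q^d\langle a,b,a^\mu\rangle_d\bigr)\,a_\mu$, which lies in $\QK_T(\Gr(k;n))$ and is $\K_T(pt)[[q]]$-bilinear in $a$ and $b$. Commutativity and the Frobenius identity are then immediate: the right-hand side of \eqref{E:QKdefn} is symmetric in its three arguments because $\overline{\mathcal{M}}_{0,3}(\Gr(k;n),d)$ carries a permutation action on its marked points commuting with the evaluation maps; hence $(a\star b,c)_{sm}=(b\star a,c)_{sm}$ and $(a\star b,c)_{sm}=(a,b\star c)_{sm}$, and nondegeneracy of $(-,-)_{sm}$ upgrades these to $a\star b=b\star a$ and the stated Frobenius property.

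The one substantive point is associativity, and here I would reproduce Givental's derivation of the K-theoretic WDVV equation. For $n\ge 4$ consider the forgetful morphism $\overline{\mathcal{M}}_{0,n}(\Gr(k;n),d)\to\overline{\mathcal{M}}_{0,4}\cong\bP^1$. In $\K(\bP^1)$ the structure sheaves of the three boundary points all equal $1-[\cO(-1)]$; pulling this relation back, applying the projection formula, and decomposing each boundary divisor of $\overline{\mathcal{M}}_{0,4}(\Gr(k;n),d)$ as a gluing of two spaces of stable maps of lower degree along $\Gr(k;n)$ produces an identity among $4$-point KGW invariants. The feature distinguishing K-theory from cohomology is the precise form of the gluing axiom: the normalization exact sequence of a nodal curve contributes a correction at the node, with the effect that the pairing appearing between the two glued factors is exactly the deformed metric $(-,-)_{sm}$, not $\langle-,-\rangle$. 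Expanding both $((a\star b)\star c,e)_{sm}$ and $(a\star(b\star c),e)_{sm}$ through \eqref{E:QKdefn} and inserting the $(-,-)_{sm}$-dual basis from the first step, this WDVV identity turns into precisely the relation $(a\star b)\star c=a\star(b\star c)$, which then follows by nondegeneracy.

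I expect this last step to be the crux: carefully formulating the splitting/gluing axiom for equivariant KGW invariants of $\Gr(k;n)$, isolating the node correction, and verifying that it reorganizes into the quantum K metric is essentially the entire content of the theorem and the reason associativity persists in the K-theoretic deformation. The $q$-adic convergence needed throughout is automatic, since for each power of $q$ only finitely many degrees $d$ contribute and every KGW invariant is a Laurent polynomial in the variables $T_i$.
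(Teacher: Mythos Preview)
The paper does not give its own proof of this theorem: it is stated with attribution to Givental \cite{givental:onwdvv} and used as background. There is therefore nothing in the paper to compare your argument against beyond the citation itself.

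Your outline is a faithful summary of Givental's original strategy, and the logical structure is sound: nondegeneracy of $(-,-)_{sm}$ via invertibility of the Gram matrix modulo $q$, commutativity and the Frobenius identity from the $S_3$-symmetry of $3$-point invariants, and associativity from the K-theoretic WDVV relation. You are right that the crux is the gluing axiom at the node and the way the correction term assembles into the quantum metric rather than the classical pairing; this is precisely the content of \cite{givental:onwdvv} (see also \cite{lee:QK} for the foundational details on the virtual structure sheaf in the equivariant setting that make the splitting axiom rigorous). One small point: you reuse the symbol $n$ for the number of marked points while it already denotes the ambient dimension of $\Gr(k;n)$; in a written-out version you would want distinct notation. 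Otherwise, as a sketch of the cited argument your proposal is accurate, and there is no discrepancy with the paper since the paper simply defers to the reference.
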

\begin{remark} It was proved in \cite{BCMP:qkfin} that the submodule 
$\K_T(\Gr(k;n)) \otimes \K_T(pt)[q]$ is stable under the QK product $\star$. This means 
that the product of two Schubert classes in $\QK_T(\Gr(k;n))$ 
has structure constants which are polynomials in 
$q$. Similar statements hold for any flag manifold \cite{ACTI:finiteness,kato:loop}. 
However, working over the ring of formal power series in $q$ {is needed for 
proving module finite generation of the claimed presentations. It is also}
natural if one studies quantizations of dual bundles,
such as those from \Cref{thm:qdual} below.
\end{remark}

We record the following immediate consequence.
\begin{cor}\label{lemma:QKeq} Let $a,b,c \in \K_T(\Gr(k;n))[[q]]$. Then $a \star b = c$ if and only if for any 
$\kappa \in \K_T(\Gr(k;n))$, 
\[ \sum_{d \ge 0} q^d \langle a,b,\kappa \rangle_d = \sum_{d \ge 0} q^d \langle c , \kappa \rangle_d \/. \]
\end{cor}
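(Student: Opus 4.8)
The plan is to deduce the claim from nondegeneracy of the small quantum K metric $(-,-)_{sm}$, viewed as a $\K_T(pt)[[q]]$-bilinear form on $\QK_T(\Gr(k;n))$. First I would record why this nondegeneracy holds. Fix a $\K_T(pt)$-basis $(\cO_\lambda)$ of $\K_T(\Gr(k;n))$ (e.g. the Schubert basis); it is at the same time a $\K_T(pt)[[q]]$-basis of $\QK_T(\Gr(k;n))$. The Gram matrix $M(q) := \big( (\cO_\lambda, \cO_\mu)_{sm} \big)$ has entries in $\K_T(pt)[[q]]$, and by \eqref{E:qKmetric} its reduction modulo $q$ is the classical Gram matrix $M(0) = \big( \langle \cO_\lambda, \cO_\mu \rangle \big)$. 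By Poincaré duality in equivariant K theory, the classical pairing on $\K_T(\Gr(k;n))$ is perfect over $\K_T(pt)$, so $M(0)$ is invertible and $\det M(0) \in \K_T(pt)^\times$. Hence $\det M(q) \equiv \det M(0) \pmod{q}$ is a unit in $\K_T(pt)[[q]]$, so $M(q)$ is invertible there and the metric is nondegenerate. This is precisely the sort of argument to be collected in \Cref{sec:filtered}: a square matrix over the $(q)$-adically complete ring $\K_T(pt)[[q]]$ whose reduction mod $q$ is invertible is itself invertible.

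Then, given $a,b,c \in \K_T(\Gr(k;n))[[q]]$, I would set $x := a \star b - c \in \QK_T(\Gr(k;n))$. By the nondegeneracy just established, $a \star b = c$ iff $x = 0$, and — since the $\cO_\lambda$ form a $\K_T(pt)[[q]]$-basis of $\QK_T(\Gr(k;n))$ — iff $(x,\cO_\lambda)_{sm} = 0$ for all $\lambda$; by $\K_T(pt)[[q]]$-bilinearity this is the same as $(x,\kappa)_{sm} = 0$ for every $\kappa \in \K_T(\Gr(k;n))$. It then remains to rewrite this in terms of KGW invariants. Writing $a = \sum_e q^e a_e$ and $c = \sum_e q^e c_e$ with $a_e, c_e \in \K_T(\Gr(k;n))$, and extending the multilinear symbols $\langle -,-,- \rangle_d$ and $\langle -,- \rangle_d$ $\K_T(pt)[[q]]$-linearly, the defining relation \eqref{E:QKdefn} of $\star$ gives $(a\star b, \kappa)_{sm} = \sum_{d\ge 0} q^d \langle a,b,\kappa \rangle_d$, while the definition \eqref{E:qKmetric} of the metric gives $(c,\kappa)_{sm} = \sum_{d\ge 0} q^d \langle c,\kappa \rangle_d$. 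Thus $(x,\kappa)_{sm} = 0$ for all $\kappa$ is exactly the stated identity.

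The main — and essentially only — obstacle is the nondegeneracy of the quantum metric, and even that reduces to the classical fact that the equivariant K-theoretic Poincaré pairing on $\Gr(k;n)$ is perfect (so that $\det M(0)$ is a unit in $\K_T(pt)$); everything else is unwinding the definitions \eqref{E:qKmetric} and \eqref{E:QKdefn}. This is why the statement can be recorded as an immediate corollary rather than a theorem with a substantive proof.
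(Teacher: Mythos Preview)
Your argument is correct and is precisely the natural elaboration the paper leaves implicit: the corollary is stated there as an ``immediate consequence'' with no proof, and the content is exactly the nondegeneracy of the quantum $\K$ metric (invertibility of the Gram matrix mod $q$ lifting to $\K_T(pt)[[q]]$) together with the defining identities \eqref{E:qKmetric} and \eqref{E:QKdefn}. There is nothing to add or correct.
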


\subsection{Quantum=classical} The goal of this section is to recall the `quantum = classical' statement,
which relates the ($3$-point, genus $0$) equivariant KGW invariants on Grassmannians to a `classical' calculation in the equivariant K-theory of a two-step flag manifold. This statement was proved in \cite{buch.m:qk}, and it generalized 
results of Buch, Kresch and Tamvakis \cite{buch.kresch.ea:gromov-witten} from quantum cohomology. The proofs
rely on the `kernel-span' technique introduced by Buch \cite{buch:qcohgr}. A Lie-theoretic approach, 
for large degrees $d$, and for the larger family of cominuscule Grassmannians, was obtained in 
\cite{chaput.perrin:rationality}; see also \cite{bcmp:projgw} for an alternative way to `quantum=classical' utilizing projected 
Richardson varieties. 

We recall next the `quantum = classical' result, proved in \cite{buch.m:qk}, and 
which will be used later in this paper. To start, form the 
following incidence diagram:
\begin{equation}\label{E:qcl-all}\begin{CD}  Z_d:=\mathrm{Fl}(k-d, k , k+d; n) @>p_1' >> \mathrm{Fl}(k-d,k;n) 
@>{\hspace{0.5cm p}\hspace{0.5cm}}>>X:=\Gr(k;n) \\ @V{p_2}VV @V{q}VV @. \\  
Y_d:=\mathrm{Fl}(k-d,k+d;n) @>pr>>\mathrm{Gr}(k-d;n)@.\end{CD} \end{equation}
Here all maps are the natural projections. Denote by $p_1: \Fl(k-d,k,k+d;n) \to \Gr(k;n)$
the composition $p_1:=p \circ p_1'$. If $d\ge k$ then we set $Y_d:= \Fl(k+d;n)$ and
if $k+d \ge n$ then we set $Y_d:= \Gr(k-d;n)$. In particular, if $d \ge \min \{ k, n-k \}$, then
$Y_d$ is a single point.

\begin{thm}\label{thm:q=cl} Let $a,b,c \in K_T(\Gr(k;n))$ and $d \ge 0$ a degree.

(a) The following equality holds in $K_T(pt)$:
\[ \langle a, b, c \rangle_d = \int_{Y_d} (p_2)_*(p_1^*(a))\cdot (p_2)_*(p_1^*(b))\cdot (p_2)_*(p_1^*(c)) \/. \]

(b) Assume that $(p_2)_*(p_1^*(a)) = pr^*(a')$ for some $a' \in K_T(\Gr(k-d;n)$. Then 
\[ \langle a, b, c \rangle_d = \int_{\Gr(k-d;n)} a' \cdot (q)_*(p^*(b))\cdot (q)_*(p^*(c)) \/. \]
\end{thm}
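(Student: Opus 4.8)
The plan is to rewrite the $3$-point invariant as an integral against the pushforward of the structure sheaf of the stable-map space, and then to move that pushforward onto the incidence variety $Z_d$. By definition $\langle a,b,c\rangle_d=\chi\bigl(\Mb_{0,3}(\Gr(k;n),d),\,\ev_1^*a\otimes\ev_2^*b\otimes\ev_3^*c\bigr)$, so writing $\ev=(\ev_1,\ev_2,\ev_3)$ we have $\langle a,b,c\rangle_d=\int_{X^3}(a\boxtimes b\boxtimes c)\cdot\ev_*[\cO_{\Mb_{0,3}(\Gr(k;n),d)}]$. I would then form the triple fiber product $W_d:=Z_d\times_{Y_d}Z_d\times_{Y_d}Z_d$, equipped with three maps $p_1^{(1)},p_1^{(2)},p_1^{(3)}\colon W_d\to X$ induced by $p_1$ and the common structure map $\bar p_2\colon W_d\to Y_d$ induced by $p_2$; over a point $(A\subset C)\in Y_d$ the three maps $p_1^{(i)}$ just record three $k$-planes $B_i$ with $A\subset B_i\subset C$. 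The goal is a morphism $\Phi\colon\Mb_{0,3}(\Gr(k;n),d)\to W_d$ satisfying $p_1^{(i)}\circ\Phi=\ev_i$ together with $R\Phi_*\cO_{\Mb_{0,3}(\Gr(k;n),d)}=\cO_{W_d}$.

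I would construct $\Phi$ via Buch's kernel--span technique. For a genus-$0$ stable map $f$ of degree $d$, set $\operatorname{Ker}(f)=\bigcap_x f(x)$ and $\operatorname{Span}(f)=\sum_x f(x)$ inside $\C^n$; one shows $\dim\operatorname{Ker}(f)\ge k-d$ and $\dim\operatorname{Span}(f)\le k+d$, and that the assignment $f\mapsto(\operatorname{Ker}(f)\subset\operatorname{Span}(f))$, which is manifestly a morphism on the locus where the two dimensions are extremal, extends to a morphism $\Mb_{0,3}(\Gr(k;n),d)\to Y_d$. Since $\operatorname{Ker}(f)\subseteq\ev_i(f)\subseteq\operatorname{Span}(f)$ for each marked point, these data assemble into $\Phi$. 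The identity $R\Phi_*\cO=\cO_{W_d}$ would then follow from the facts that $\Mb_{0,3}(\Gr(k;n),d)$ is irreducible, normal, with rational singularities, that $W_d$ is smooth (a tower of Grassmann bundles over the smooth variety $Y_d$), and that the fibers of $\Phi$ are rationally connected with vanishing higher cohomology of the structure sheaf. This is the step I expect to be the main obstacle: in quantum cohomology it is enough that $\Phi$ be generically finite of degree one, but in K-theory one must genuinely control the higher direct images $R^i\Phi_*\cO$, which is exactly the substance of the kernel--span argument and the reason the rational-singularities input is needed.

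Granting this, part (a) is formal. The projection formula gives $\langle a,b,c\rangle_d=\int_{W_d}p_1^{(1)*}(a)\cdot p_1^{(2)*}(b)\cdot p_1^{(3)*}(c)$. Every map in the tower $W_d\to Z_d\times_{Y_d}Z_d\to Z_d\to Y_d$ is a composition of Grassmann-bundle projections, hence flat, so one may push forward along $\bar p_2$ factor by factor, applying flat base change and the projection formula at each step, to obtain $(\bar p_2)_*\bigl(p_1^{(1)*}(a)\cdot p_1^{(2)*}(b)\cdot p_1^{(3)*}(c)\bigr)=(p_2)_*(p_1^*a)\cdot(p_2)_*(p_1^*b)\cdot(p_2)_*(p_1^*c)$, and integrating over $Y_d$ yields (a).

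For part (b), I would substitute the hypothesis $(p_2)_*(p_1^*a)=pr^*(a')$ into (a) and apply the projection formula along $pr\colon Y_d\to\Gr(k-d;n)$, reducing to the claim that $pr_*\bigl((p_2)_*(p_1^*b)\cdot(p_2)_*(p_1^*c)\bigr)=q_*(p^*b)\cdot q_*(p^*c)$. Using base change over $p_2$ the left-hand side becomes a pushforward from $Z_d\times_{Y_d}Z_d$, while using base change over $q$ the right-hand side becomes a pushforward from $\Fl(k-d,k;n)\times_{\Gr(k-d;n)}\Fl(k-d,k;n)$. The product $p_1'\times p_1'$ defines a proper birational morphism between these two smooth projective varieties that intertwines the relevant projections (both sides remember the common $(k-d)$-plane $A$ and the two $k$-planes $B_1,B_2$, while the only difference is the auxiliary $(k+d)$-plane $C$, whose fiber is a Grassmannian); hence its derived pushforward of $\cO$ is $\cO$, and the two sides agree. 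This gives (b).
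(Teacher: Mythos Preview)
The paper does not actually prove this theorem: part (a) is quoted from Buch--Mihalcea \cite{buch.m:qk}, and for part (b) the paper only says ``part (b) follows from (a) and the fact that the left diagram is a fibre square; for details see \cite{buch.m:qk}.'' So there is nothing to compare against beyond that one-line hint.

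Your outline for (a) is the Buch--Mihalcea kernel--span argument, reconstructed correctly. You rightly identify $R\Phi_*\cO_{\Mb_{0,3}}=\cO_{W_d}$ as the substantive step that separates the K-theoretic statement from its cohomological predecessor, and the ingredients you list (irreducibility and rational singularities of the stable-map space, smoothness of $W_d$, rational connectedness of the fibers) are exactly what that reference uses.

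For (b), your argument is in fact more careful than the paper's hint. Note that the left square in diagram~\eqref{E:qcl-all} is \emph{not} literally Cartesian: given $(A\subset C)\in Y_d$ and $(A\subset B)\in\Fl(k-d,k;n)$ over the same $A$, nothing forces $B\subset C$, so $Z_d$ is only a closed subvariety of the fibre product. The paper's phrase ``fibre square'' is thus shorthand, with the real work deferred to the reference. Your route---pushing the product down from $Z_d\times_{Y_d}Z_d$ and comparing with $\Fl(k-d,k;n)\times_{\Gr(k-d;n)}\Fl(k-d,k;n)$ via the forgetful map $p_1'\times p_1'$, which is proper birational between smooth varieties with Grassmannian fibres (parametrizing the forgotten $(k+d)$-plane $C\supset B_1+B_2$)---is a valid and transparent way to close the gap. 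The generic fibre is a point because generically $\dim(B_1+B_2)=2k-(k-d)=k+d$, forcing $C=B_1+B_2$; and $R\pi_*\cO=\cO$ for a proper birational morphism of smooth varieties gives the needed identification.
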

Observe that part (b) follows from (a) and the fact that the left diagram is a fibre square; for details
see \cite{buch.m:qk}. We will often use the tautological sequence on $\Gr(k;n)$: 
\[ 0 \to \cS \to  \mathbb{C}^n \to  \cQ:=\mathbb{C}^n/\cS \to 0 \/. \] 
To lighten notation, we will denote by the same letters the bundles on various flag manifolds
from the diagram \eqref{E:qcl-all}, but we will indicate in the subscript the rank of the bundle in question.
To illustrate in a situation used below, observe that 
$\mathrm{Fl}(k-d,k;n)$ equals to the Grassmann bundle 
$\mathbb{G}(d, \mathbb{C}^n/\cS_{k-d}) \to \Gr(k-d;n)$, 
with tautological sequence 
\[ 0 \to \cS_k/\cS_{k-d} \to  \mathbb{C}^n/\cS_{k-d} \to  \mathbb{C}^n/\cS_{k} \simeq p^*(\mathbb{C}^n/\cS_{k}) \to 0 \/; \]
all these are bundles on $\mathrm{Fl}(k-d,k;n)$, with $\cS_k$ is pulled back from $\Gr(k;n)$, and 
$\cS_{k-d}$ is pulled back from $\Gr(k-d;n)$. 

\section{Cohomological calculations from `quantum = classical' diagrams} In this section 
we calculate some push-forwards of tautological bundles, which will be utilized in the proof of our main result.
For a vector bundle $E$ of rank $e$, we denote by $\lambda(E)_{\ge s}$, 
respectively $\lambda(E)_{\le s}$ and so on, the truncations 
$y^s [\wedge^s E] + \ldots + y^e [\wedge^e E]$, respectively 
$1 + y [E] + \ldots + [\wedge^{s} E]$ etc. 
We use the notation from the diagram \eqref{E:qcl-all}.
\begin{prop}\label{prop:push-pull} Let $d \ge 1$. Then the following hold:

(a) For any $j \ge 0$, $(p_2)_*p_1^* (\wedge^j \cS) = \wedge^j  \cS_{k-d}$ in $\K_T(Y_d)$. In particular, 
\begin{equation}\label{E:pfS} (p_2)_*(p_1^* \lambda_y(\cS)) = \lambda_y(\cS_{k-d}) \/. \end{equation}

(b) The following holds in $\K_T(Y_d)$:
\begin{equation}\label{E:pfQ} (p_2)_*p_1^* (\lambda_y (\cQ)) = \lambda_y(\C^n/\cS_{k+d}) \cdot \lambda_y(\cS_{k+d}/\cS_{k-d})_{\le d} \/. \end{equation} 

(c) The following holds in $\K_T(Y_d)$:
\begin{equation}\label{E:pfSvee} (p_2)_*p_1^* (\lambda_y (\cS^*)) = \lambda_y(\cS_{k-d}^*) \cdot 
\lambda_y((\cS_{k+d}/\cS_{k-d})^*)_{\le d} \/. \end{equation}
\end{prop}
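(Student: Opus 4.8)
The plan is to exploit the fact that $p_2\colon Z_d=\Fl(k-d,k,k+d;n)\to Y_d=\Fl(k-d,k+d;n)$ is a Grassmann bundle and then reduce all three formulas to \Cref{cor:relativeBWB}. Over a point of $Y_d$, i.e.\ a flag $F_{k-d}\subset F_{k+d}\subset\C^n$, the fibre of $p_2$ is the Grassmannian of $k$-planes $F_k$ with $F_{k-d}\subset F_k\subset F_{k+d}$, which is $\Gr(d;F_{k+d}/F_{k-d})$. Hence $p_2$ identifies $Z_d$ with the Grassmann bundle $\G(d,\cS_{k+d}/\cS_{k-d})\to Y_d$ attached to the rank-$2d$ bundle $\cS_{k+d}/\cS_{k-d}$, with tautological subbundle $\underline{\cS}:=\cS_k/\cS_{k-d}$ and tautological quotient $\underline{\cQ}:=\cS_{k+d}/\cS_k$, both of rank $d$. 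On $Z_d$ one has $p_1^*\cS=\cS_k$ and $p_1^*\cQ=\C^n/\cS_k$ (so $p_1^*\cS^*=\cS_k^*$), while $\cS_{k-d}$, $\cS_{k+d}$ and $\C^n/\cS_{k+d}$ are pulled back from $Y_d$ via $p_2$.

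In each case I would then split the pulled-back bundle by a short exact sequence on $Z_d$ in which one term descends to $Y_d$ and the other is $\underline{\cS}$, $\underline{\cQ}$, or $\underline{\cS}^*$: for (a), $0\to\cS_{k-d}\to\cS_k\to\underline{\cS}\to 0$; for (b), $0\to\underline{\cQ}\to\C^n/\cS_k\to\C^n/\cS_{k+d}\to 0$; for (c), the dual $0\to\underline{\cS}^*\to\cS_k^*\to\cS_{k-d}^*\to 0$. Multiplicativity of $\lambda_y$ together with the projection formula reduces the left-hand sides to $\lambda_y(\cS_{k-d})\cdot(p_2)_*\lambda_y(\underline{\cS})$, to $\lambda_y(\C^n/\cS_{k+d})\cdot(p_2)_*\lambda_y(\underline{\cQ})$, and to $\lambda_y(\cS_{k-d}^*)\cdot(p_2)_*\lambda_y(\underline{\cS}^*)$, respectively. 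I then apply \Cref{cor:relativeBWB} with the column partitions $\lambda=(1^j)$, using $\wedge^j\underline{\cS}=\fS_{(1^j)}(\underline{\cS})$ and its analogues. Since the relative Grassmann bundle has ambient rank $2d$ and tautological rank $d$, the bound required in \Cref{cor:relativeBWB} is $\lambda_1\le d$, which is automatic for $(1^j)$ because $\lambda_1=1\le d$ (recall $d\ge 1$). Thus $R^i(p_2)_*\wedge^j\underline{\cS}=0$ for all $i$ and all $1\le j\le d$, which together with $R^\bullet(p_2)_*\cO_{Z_d}=\cO_{Y_d}$ (Grassmann bundle fibres are connected) and the vanishing of $\wedge^j$ of a rank-$d$ bundle for $j>d$ gives $(p_2)_*\lambda_y(\underline{\cS})=1$; similarly $(p_2)_*\wedge^j\underline{\cQ}=\wedge^j(\cS_{k+d}/\cS_{k-d})$ by \Cref{cor:relativeBWB}(c) and $(p_2)_*\wedge^j\underline{\cS}^*=\wedge^j((\cS_{k+d}/\cS_{k-d})^*)$ by \Cref{cor:relativeBWB}(b) for $1\le j\le d$, which sum with weights $y^j$ to $\lambda_y(\cS_{k+d}/\cS_{k-d})_{\le d}$ and $\lambda_y((\cS_{k+d}/\cS_{k-d})^*)_{\le d}$. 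Substituting back yields (a)--(c); the term-by-term statement $(p_2)_*p_1^*(\wedge^j\cS)=\wedge^j\cS_{k-d}$ in (a) is read off by extracting the coefficient of $y^j$.

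I do not anticipate a genuine obstacle: once the Grassmann bundle structure of $p_2$ is identified, the rest is bookkeeping on top of \Cref{cor:relativeBWB}. The only points needing care are that the cohomology input is invoked solely for column shapes $(1^j)$ with $j\le d$, so the bound $\lambda_1\le d$ holds unconditionally, and that one must keep straight which tautological bundles on $Z_d$ descend to $Y_d$ (namely $\cS_{k-d}$, $\cS_{k+d}$, $\C^n/\cS_{k+d}$) and which ones ($\underline{\cS}$, $\underline{\cQ}$ and their duals) are genuinely relative and hence carry the nontrivial direct images.
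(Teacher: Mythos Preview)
Your proposal is correct and follows essentially the same approach as the paper: identify $p_2$ with the Grassmann bundle $\G(d,\cS_{k+d}/\cS_{k-d})$ with tautological sub $\cS_k/\cS_{k-d}$ and quotient $\cS_{k+d}/\cS_k$, split each pulled-back bundle by the appropriate short exact sequence, apply multiplicativity of $\lambda_y$ and the projection formula, and invoke \Cref{cor:relativeBWB} for the column partitions $(1^j)$. Your extra remarks checking the bound $\lambda_1\le d$ and handling the $j=0$ term are accurate and slightly more explicit than the paper's presentation, but there is no substantive difference in strategy.
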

\begin{proof} Consider the exact sequence on $\Fl(k-d,k,k+d)$ given by \[ 0 \to \cS_{k-d} \to S_k = p_1^*\cS \to \cS_k/\cS_{k-d} \to 0 \/. \]
It follows that $\lambda_y(\cS_k) = \lambda_y(\cS_{k-d}) \cdot \lambda_y(\cS_k/\cS_{k-d})$. By the projection formula, 
\begin{equation}\label{E:p2lyS} (p_2)_*(\lambda_y(\cS_k)) = \lambda_y(\cS_{k-d}) \cdot (p_2)_*(\lambda_y(\cS_k/\cS_{k-d})) \/. \end{equation}
Observe now that $p_2: \Fl(k-d,k,k+d;n) \to \Fl(k-d,k+d;n)$ may be identified with the Grassmann bundle
$\mathbb{G}(d,\cS_{k+d}/\cS_{k-d})$, with tautological subbundle $\cS_k/\cS_{k-d}$. Then part (a) follows from 
the projection formula and \Cref{cor:relativeBWB}, which shows that
\[ (p_2)_*(\lambda_y(\cS_k/\cS_{k-d})) = [\cO_{\Fl(k-d,k+d;n)}] \/.\] 


{For (b)}, consider the short exact sequence on $\Fl(k-d,k,k+d;n)$, 
\[ 0 \to \cS_{k+d}/\cS_k \to \C^n/\cS_k = p_1^*\cQ \to \C^n/\cS_{k+d} \to 0 \/. \] 
Then $\lambda_y(p_1^* \cQ) = \lambda_y(\cS_{k+d}/\cS_k) \cdot (p_2)^*\lambda_y(\C^n/\cS_{k+d})$, {and 
by the projection formula 
\[ (p_2)_*(\lambda_y(p_1^* \cQ)) = (p_2)_*(\lambda_y(\cS_{k+d}/\cS_k)) \cdot \lambda_y(\C^n/\cS_{k+d}) \/. \]
}The claim follows again by \Cref{cor:relativeBWB}, {using that 
$\cS_{k+d}/\cS_k$ is the tautological quotient bundle of the Grassmann bundle 
$\mathbb{G}(d,\cS_{k+d}/\cS_{k-d})$, thus}
\[ \pi_*\lambda_y(\cS_{k+d}/\cS_k) = 1+ y[\cS_{k+d}/\cS_{k-d}] + 
\ldots + y^d \wedge^d [\cS_{k+d}/\cS_{k-d}] \/. \]  

Part (c) follows similarly to (b), utilizing the exact sequence 
$0 \to (\cS_k/\cS_{k-d})^* \to \cS_k^* = p_1^*(\cS^*) \to \cS_{k-d}^* \to 0$
and \Cref{cor:relativeBWB}.\end{proof}
We need the analogues of some of the previous equations in the equivariant K theory of $\Gr(k-d;n)$.
\begin{cor}\label{cor:pfinGr} (a) The following equations hold in $\K_T(\Gr(k-d;n))$:

(a) $q_*(p^* \lambda_y(\cS)) = \lambda_y(\cS_{k-d})$.

(b) $q_*p^* (\lambda_y (\cS^*)) = \lambda_y(\cS_{k-d}^*) \cdot \lambda_y((\C^n/\cS_{k-d})^*)_{\le d}$.

(c) $q_*p^* (\lambda_y (\cQ)) = \lambda_y(\C^n/\cS_{k-d})_{\le n-k}$.
\end{cor}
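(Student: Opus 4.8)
\textbf{Proof plan for \Cref{cor:pfinGr}.}

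The strategy is to reduce the three push-forward computations over $\Gr(k-d;n)$ to the already-established computations over $Y_d = \Fl(k-d,k+d;n)$ in \Cref{prop:push-pull}, by factoring the map $q : \Fl(k-d,k;n) \to \Gr(k-d;n)$ and $p : \Fl(k-d,k;n) \to \Gr(k;n)$ through the diagram \eqref{E:qcl-all}. Concretely, recall that $p_1 = p \circ p_1'$ and that $pr \circ p_2 = q \circ p_1'$ (both equal the projection $Z_d \to \Gr(k-d;n)$), where $p_1' : Z_d \to \Fl(k-d,k;n)$ and $pr : Y_d \to \Gr(k-d;n)$. Since $p_1'$ is a flat (Grassmann bundle) map and the relevant square in \eqref{E:qcl-all} is a fibre square, for any class $\alpha \in \K_T(\Gr(k;n))$ one has $q_*(p^*\alpha) = q_*(p_1')_*(p_1')^* p^*\alpha = pr_*\, (p_2)_*(p_1^*\alpha)$; that is, the desired push-forward over $\Gr(k-d;n)$ is obtained by applying $pr_*$ to the output of \Cref{prop:push-pull}. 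So the whole content of the corollary is: take the formulas \eqref{E:pfS}, \eqref{E:pfSvee}, \eqref{E:pfQ}, and push them forward along the Grassmann bundle $pr : \Fl(k-d,k+d;n) \to \Gr(k-d;n)$, which is $\mathbb{G}(2d, \C^n/\cS_{k-d})$ with tautological subbundle $\cS_{k+d}/\cS_{k-d}$.

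Now I would treat the three parts. For (a): \Cref{prop:push-pull}(a) gives $(p_2)_* p_1^* \lambda_y(\cS) = \lambda_y(\cS_{k-d})$, and $\cS_{k-d}$ is pulled back from $\Gr(k-d;n)$, so $pr_*$ of it is just $\lambda_y(\cS_{k-d})$ by the projection formula together with $pr_*[\cO_{Y_d}] = [\cO_{\Gr(k-d;n)}]$ (a Grassmann bundle has connected fibres with no higher cohomology of the structure sheaf), proving (a). For (b): by \eqref{E:pfSvee}, $(p_2)_*p_1^*\lambda_y(\cS^*) = \lambda_y(\cS_{k-d}^*) \cdot \lambda_y((\cS_{k+d}/\cS_{k-d})^*)_{\le d}$; here $\lambda_y(\cS_{k-d}^*)$ is pulled back from the base, so by the projection formula it suffices to compute $pr_* \big(\lambda_y((\cS_{k+d}/\cS_{k-d})^*)_{\le d}\big)$, which is a push-forward of a truncated $\lambda_y$-class of (the dual of) the tautological subbundle of $\mathbb{G}(2d,\C^n/\cS_{k-d})$. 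For (c): by \eqref{E:pfQ}, $(p_2)_*p_1^*\lambda_y(\cQ) = \lambda_y(\C^n/\cS_{k+d}) \cdot \lambda_y(\cS_{k+d}/\cS_{k-d})_{\le d}$, and here one must push forward along $pr$ a product of $\lambda_y$-classes of the tautological quotient bundle $\C^n/\cS_{k+d}$ and the tautological subbundle $\cS_{k+d}/\cS_{k-d}$ of the Grassmann bundle, landing on $\lambda_y(\C^n/\cS_{k-d})_{\le n-k}$.

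The main obstacle is the last two Grassmann-bundle push-forwards: unlike in the proof of \Cref{prop:push-pull}, the integrand over $pr$ is not a single Schur bundle of $\underline\cS$ or $\underline\cQ$ with partition inside the relevant rectangle, but a \emph{truncated} $\lambda_y$-class of the subbundle, multiplied (in case (c)) by a $\lambda_y$-class of the quotient. To handle this I would expand $\lambda_y(\cS_{k+d}/\cS_{k-d})_{\le d} = \sum_{p \le d} y^p \wedge^p(\cS_{k+d}/\cS_{k-d}) = \sum_{p \le d} y^p \fS_{(1^p)}(\underline\cS)$ (and similarly for the dual, using $\wedge^p(\underline\cS^*) = \fS_{(1^p)}(\underline\cS^*)$), then apply \Cref{cor:relativeBWB} term by term to each Schur-bundle summand — noting that the partitions $(1^p)$ with $p \le d$ are exactly those fitting in the $2d \times (n-k-2d)$\,-type rectangle controlling the bundle $\mathbb{G}(2d,\C^n/\cS_{k-d})$, which is what makes the relative Borel--Weil--Bott vanishing and the identification $R^0 pr_*$ applicable; in case (c) I would additionally pull the factor $\lambda_y(\C^n/\cS_{k+d})$ through via the projection formula after noting $\C^n/\cS_{k+d}$ is the tautological quotient of the bundle, so that $pr_*$ of $\lambda_y(\C^n/\cS_{k+d})$ is $\lambda_y(\C^n/\cS_{k-d})$ truncated to degree $\le n-k-2d$, and then reassemble the product to recognize the telescoping that yields exactly $\lambda_y(\C^n/\cS_{k-d})_{\le n-k}$. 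This bookkeeping with truncations and the rectangle constraint is the only delicate point; everything else is the projection formula and \Cref{cor:relativeBWB}.
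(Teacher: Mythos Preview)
Your route is genuinely different from the paper's, and unnecessarily circuitous. The paper does \emph{not} factor through $Z_d$ and $Y_d$ at all: it simply observes that $q:\Fl(k-d,k;n)\to\Gr(k-d;n)$ is \emph{itself} the Grassmann bundle $\mathbb{G}(d,\C^n/\cS_{k-d})$, with tautological subbundle $\cS_k/\cS_{k-d}$ and tautological quotient $\C^n/\cS_k=p^*\cQ$. Then (a) and (b) follow from the exact sequence $0\to\cS_{k-d}\to\cS_k\to\cS_k/\cS_{k-d}\to 0$ (and its dual), the projection formula, and \Cref{cor:relativeBWB} applied directly to $q$; part (c) is immediate since $p^*\cQ$ \emph{is} the tautological quotient of $q$, so $q_*\wedge^j(p^*\cQ)=\wedge^j(\C^n/\cS_{k-d})$ for $j\le n-k$ by \Cref{cor:relativeBWB}(c) in one step. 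Your factorization identity $q_*p^*=pr_*(p_2)_*p_1^*$ is correct (it only needs $(p_1')_*(p_1')^*=\mathrm{id}$ and functoriality; note, incidentally, that the left square in \eqref{E:qcl-all} is \emph{not} a fibre square --- the fibre product also contains flags with $F_k\not\subset F_{k+d}$ --- but you do not actually use base change). With this, your arguments for (a) and (b) do go through, modulo a slip in the rectangle dimension: the ambient bundle of $pr$ has rank $n-k+d$, so the quotient rank is $n-k-d$, not $n-k-2d$.

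Part (c), however, has a real gap. After \eqref{E:pfQ} you must compute $pr_*\bigl(\lambda_y(\C^n/\cS_{k+d})\cdot\lambda_y(\cS_{k+d}/\cS_{k-d})_{\le d}\bigr)$, and neither factor is pulled back from $\Gr(k-d;n)$, so the projection formula does not let you ``pull one through''. Nor does \Cref{cor:relativeBWB} apply term-by-term: the summands are tensor products $\wedge^i\underline\cQ\otimes\wedge^j\underline\cS$, not single Schur bundles of either $\underline\cS$ or $\underline\cQ$, and the corollary says nothing about such mixed products. Your claimed value for $pr_*\lambda_y(\C^n/\cS_{k+d})$ is also off (it is $\lambda_y(\C^n/\cS_{k-d})_{\le n-k-d}$, not ${\le}\,n-k-2d$), and in any case knowing $pr_*$ of each factor separately would not give $pr_*$ of the product. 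The ``telescoping'' you allude to would require a separate computation of $pr_*(\wedge^i\underline\cQ\otimes\wedge^j\underline\cS)$ via Borel--Weil--Bott for two-step weights, which is substantially more work than the one-line direct argument the paper uses. I would recommend abandoning the detour through $Y_d$ for (c) and noting instead that $p^*\cQ$ is already the tautological quotient of the Grassmann bundle $q$.
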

\begin{proof} Parts (a) and (b) follow similarly to \Cref{prop:push-pull} (a). For (a), we utilize the short exact sequence
$0 \to \cS_{k-d} \to S_k = p_1^*\cS \to \cS_k/\cS_{k-d} \to 0$ on $\Fl(k-d, k;n)$, and that the projection $q:\Fl(k-d,k;n) \to \Gr(k-d;n)$ is identified to the Grassmann bundle $\mathbb{G}(d;\C^n/\cS_{k-d}) \to \Gr(k-d;n)$ with tautological subbundle $\cS_k/\cS_{k-d}$. For (b) one takes the dual of this sequence. Then by \Cref{cor:relativeBWB}
\[ q_*(\lambda_y(\cS_{k}/\cS_{k-d})) = [\cO_{\Gr(k-d;n)}] \textrm{ and } 
q_*(\lambda_y((\cS_{k}/\cS_{k-d})^*))= \lambda_y((\C^n/\cS_{k-d})^*)_{\le d} \/.\] 
Then (a) and (b) follow from this and the projection formula.

For part (c), observe that $\Fl(k-d,k;n) \to \Gr(k-d;n)$ is the Grassmann bundle $\mathbb{G}(d,\C^n/\cS_{k-d})$
with tautological quotient bundle $\C^n/\cS_k = p^*\cQ$. Then the claim follows again by \Cref{cor:relativeBWB}.
\end{proof}
Observe that $pr^*(\lambda_y(\cS_{k-d})) = \lambda_y(\cS_{k-d})$ in $\K_T(Y_d)$. Utilizing this, and combining \Cref{prop:push-pull}, 
\Cref{cor:pfinGr} and part (b) from \Cref{thm:q=cl} we obtain the following useful corollary:

\begin{cor}\label{cor:KGWS} Fix arbitrary $b,c \in \K_T(\Gr(k;n))$ and any degree $d \ge 0$. Then
the equivariant KGW
invariant $\langle \lambda_y(\cS),b,c \rangle_d$ satisfies:
\[ \langle \lambda_y(\cS),b,c \rangle_d = \int_{\Gr(k-d;n)} \lambda_y(\cS_{k-d}) \cdot q_*p^*(b) \cdot q_*p^*(c) \/. \]
In particular, the $2$-point KGW invariant $\langle b,c \rangle_d$ satisfies:
\[ \langle b,c \rangle_d = \int_{\Gr(k-d;n)} q_*p^*(b)\cdot q_*p^*(c) \/. \]
\end{cor}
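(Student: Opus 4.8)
The plan is to deduce \Cref{cor:KGWS} by combining the `quantum=classical' statement of \Cref{thm:q=cl}(b) with the push-forward computations already assembled in \Cref{prop:push-pull} and \Cref{cor:pfinGr}. First I would verify that the class $a = \lambda_y(\cS)$ satisfies the hypothesis of \Cref{thm:q=cl}(b): by \Cref{prop:push-pull}(a) we have $(p_2)_* p_1^*(\lambda_y(\cS)) = \lambda_y(\cS_{k-d})$ in $\K_T(Y_d)$, and since $\cS_{k-d}$ is pulled back from $\Gr(k-d;n)$ along $pr: Y_d \to \Gr(k-d;n)$, this equals $pr^*(a')$ with $a' = \lambda_y(\cS_{k-d}) \in \K_T(\Gr(k-d;n))$. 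That is exactly the compatibility noted just before the statement, so \Cref{thm:q=cl}(b) applies verbatim and yields
\[ \langle \lambda_y(\cS), b, c \rangle_d = \int_{\Gr(k-d;n)} \lambda_y(\cS_{k-d}) \cdot q_* p^*(b) \cdot q_* p^*(c) \/, \]
which is the first asserted formula.

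For the second formula, the `in particular' statement about the $2$-point invariant $\langle b, c \rangle_d$, I would take $a$ to be the structure sheaf class $[\cO_{\Gr(k;n)}] = 1$, which is the unit of $\K_T(\Gr(k;n))$. Since $\lambda_y(\cS)$ at $y=0$ (or more directly, since the constant term of $\lambda_y(\cS)$) is $1$, and $q_* p^*$ is a ring homomorphism preserving the unit, one sees $(p_2)_* p_1^*(1) = 1 = pr^*(1)$, so again \Cref{thm:q=cl}(b) applies with $a' = 1$ and gives $\langle b, c \rangle_d = \int_{\Gr(k-d;n)} q_* p^*(b) \cdot q_* p^*(c)$. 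Alternatively one can simply extract the $y^0$-coefficient of the first formula, since $\lambda_y(\cS)$ has constant term $1$ and $\lambda_y(\cS_{k-d})$ likewise; I would present whichever is cleaner, probably just reading off the $y^0$ term.

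Since every ingredient is already in place, there is essentially no obstacle here: the only thing to be careful about is the bookkeeping of which flag manifold each bundle lives on and the identification $pr^* \lambda_y(\cS_{k-d}) = \lambda_y(\cS_{k-d})$ inside $\K_T(Y_d)$, which is flagged in the sentence preceding the corollary. The degenerate cases where $d \ge k$ (so $\cS_{k-d}$ and $\Gr(k-d;n)$ degenerate) should be checked to cause no trouble: when $d \ge k$ the bundle $\cS_{k-d}$ is zero, $\lambda_y(\cS_{k-d}) = 1$, $\Gr(k-d;n)$ is a point, and the formulas still read correctly with the conventions fixed after \eqref{E:qcl-all}. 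The proof is therefore a short assembly: cite \Cref{prop:push-pull}(a) for the hypothesis of \Cref{thm:q=cl}(b), invoke \Cref{thm:q=cl}(b), and specialize $a$ (or take the $y^0$-coefficient) for the $2$-point case.
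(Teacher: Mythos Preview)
Your proposal is correct and follows exactly the paper's own approach: the paper's entire justification is the sentence preceding the corollary, which notes that $pr^*(\lambda_y(\cS_{k-d})) = \lambda_y(\cS_{k-d})$ and cites \Cref{prop:push-pull} together with \Cref{thm:q=cl}(b). Your expansion of this reasoning, including the $y^0$-specialization for the $2$-point case, matches the intended argument.
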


\section{Quantum duals}\label{sec:qduals} The main goal of this section is to prove the next identity.
\begin{thm}\label{thm:qdual}The following holds in $\QK_T(\Gr(k;n))$:
\[ (\lambda_y(\cS)-1) \star \det (\cQ) = (1-q) ((\lambda_y(\cS) -1) \cdot \det (\cQ)) \/. \]
Equivalently, for any $i>0$, 
\[ \wedge^i (\cS) \star \det (\cQ) = (1-q) \wedge^{k-i}(\cS^*) \cdot \det(\C^n) \/. \]
\end{thm}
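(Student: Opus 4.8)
The plan is to compute the quantum product $(\lambda_y(\cS)-1)\star \det\cQ$ directly using \Cref{lemma:QKeq}: it suffices to check that for every $\kappa\in\K_T(\Gr(k;n))$ the power series $\sum_{d\ge 0}q^d\langle \lambda_y(\cS)-1,\det\cQ,\kappa\rangle_d$ equals $\sum_{d\ge 0}q^d\langle (1-q)((\lambda_y(\cS)-1)\cdot\det\cQ),\kappa\rangle_d$. Splitting off $\lambda_y(\cS)-1=\lambda_y(\cS)-\one$ and using that the product with $\one$ is the identity for the quantum K product, this reduces to understanding the KGW invariants $\langle \lambda_y(\cS),\det\cQ,\kappa\rangle_d$ for $d\ge 1$, and showing
\[ \langle \lambda_y(\cS),\det\cQ,\kappa\rangle_d \;=\; \langle (\lambda_y(\cS)\cdot\det\cQ) ,\kappa\rangle_d - \langle (\lambda_y(\cS)-1)\cdot\det\cQ,\kappa\rangle_{d-1} \]
for $d\ge 1$ (with the $d=0$ terms matching trivially), since that telescoping is exactly what the factor $(1-q)$ encodes.

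First I would apply \Cref{cor:KGWS}: for $d\ge 1$,
\[ \langle \lambda_y(\cS),\det\cQ,\kappa\rangle_d \;=\; \int_{\Gr(k-d;n)} \lambda_y(\cS_{k-d})\cdot q_*p^*(\det\cQ)\cdot q_*p^*(\kappa)\/. \]
The key computation is $q_*p^*(\det\cQ)$ on $\Gr(k-d;n)$. Here $p^*\cQ = \C^n/\cS_k$ is the tautological quotient bundle on the Grassmann bundle $\Fl(k-d,k;n)=\mathbb{G}(d,\C^n/\cS_{k-d})\to\Gr(k-d;n)$, and $\det(\C^n/\cS_k)=\det(\C^n/\cS_{k-d})\otimes\det(\cS_{k-d}/\cS_k)^*=\det(\C^n/\cS_{k-d})\otimes\det(\underline{\cQ}')$ where $\underline{\cQ}'=\cS_{k-d}/\cS_k$ is (a twist of) the tautological quotient of that Grassmann bundle of rank $d$. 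So by the projection formula and the $\lambda_y$-truncation computation already used in the proof of \Cref{prop:push-pull}(b)/\Cref{cor:pfinGr}(c), $q_*p^*(\det\cQ)=\det(\C^n/\cS_{k-d})\cdot\big(\text{top piece of } \lambda_y(\C^n/\cS_{k-d}) \text{ contribution}\big)$; carefully, $R^\bullet\pi_*\det(\underline{\cQ}')$ is the top exterior power $\wedge^{?}$ term, yielding $q_*p^*(\det\cQ)=\det(\C^n/\cS_{k-d})$ itself (the rank-$d$ quotient's determinant pushes to the relevant Schur bundle, which by \Cref{cor:relativeBWB}(c) with $\lambda=(1^d)$ applied after identifying the dual Grassmann bundle gives precisely $\det(\C^n/\cS_{k-d})$, up to the expected degree shift). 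I would then also record $q_*p^*(\kappa)$ abstractly and compare against the $d=0$ and $d-1$ incidence diagrams for $\Gr(k;n)$ and $\Gr(k-d;n)$; the point is that $\int_{\Gr(k-d;n)}\lambda_y(\cS_{k-d})\det(\C^n/\cS_{k-d})\cdot(\cdots)$ naturally factors through the two-step flag $\Fl(k-d,k;n)$ in a way matching a degree-$(d-1)$ invariant on $\Gr(k;n)$, via $\det\cS_{k-d}=\det\cS_k\otimes\det(\cS_k/\cS_{k-d})$ and the classical identity $\wedge^i\cS\cdot\det\cQ=\wedge^{k-i}\cS^*\cdot\det\C^n$ stated in the introduction. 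The "equivalently" reformulation then follows by expanding $\lambda_y(\cS)=\sum_i y^i\wedge^i\cS$, comparing coefficients of $y^i$, and using that classical identity to rewrite $\wedge^i\cS\cdot\det\cQ$ as $\wedge^{k-i}\cS^*\cdot\det\C^n$ on the right-hand side.

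The main obstacle I anticipate is the bookkeeping of the degree shift: the identity has the shape "degree-$d$ quantum invariant $=$ degree-$0$ classical $-$ degree-$(d-1)$ classical", so one must show that the geometry of \Cref{cor:KGWS} for degree $d$, after the above push-pull simplifications, lands exactly in the degree-$(d-1)$ ''quantum=classical'' picture rather than degree $d$ or $d-2$. This amounts to checking that $\det\cQ$ absorbs one ''unit'' of the quantum parameter — equivalently that multiplication by $\det\cQ$ shifts the relevant $\Gr(k-d;n)$ back to $\Gr(k-(d-1);n)=\Gr(k-d+1;n)$ under the maps $pr$ and $p$. I would verify this by a direct comparison of the two incidence diagrams (for degree $d$ on $\Gr(k;n)$ versus degree $d-1$), tracking $\det$'s of the step bundles $\cS_k/\cS_{k-d}$; this is where the factor $1-q$ (rather than $\frac{q}{1-q}$ or $\frac{1}{1-q}$) gets pinned down, and it is the one place where a sign or an off-by-one in the rank of the pushed-forward Schur bundle would break the argument, so I would double-check it against the worked example $\QK_T(\Gr(2;5))$ in the appendix.
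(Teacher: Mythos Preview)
Your overall strategy matches the paper's: reduce via \Cref{lemma:QKeq} to the KGW identity~\eqref{E:qdualGW}, then evaluate both sides through the quantum=classical machinery of \Cref{cor:KGWS}. The reformulation with $\lambda_y(\cS)$ in place of $\lambda_y(\cS)-1$ is harmless since $\langle 1,\det\cQ,\kappa\rangle_d=\langle\det\cQ,\kappa\rangle_d$.

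However, there is a concrete error and a missing key step. First, your computation of $q_*p^*(\det\cQ)$ is wrong: the correct answer is $\wedge^{n-k}(\C^n/\cS_{k-d})$, not $\det(\C^n/\cS_{k-d})=\wedge^{n-k+d}(\C^n/\cS_{k-d})$. This follows directly from the $y^{n-k}$-coefficient in \Cref{cor:pfinGr}(c), or from your own projection-formula setup once you push forward $\det(\cS_k/\cS_{k-d})^*$ correctly: by \Cref{cor:relativeBWB}(b) with $\lambda=(1^d)$ one gets $\wedge^d((\C^n/\cS_{k-d})^*)$, and tensoring with $\det(\C^n/\cS_{k-d})$ yields $\wedge^{n-k}(\C^n/\cS_{k-d})$, not the determinant. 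This is exactly the ``off-by-one in the rank'' you flagged, and it already occurs.

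Second, and more seriously, you never compute the push-forward needed for the \emph{right}-hand side two-point invariants, namely $q_*p^*\bigl(\lambda_y(\cS)\cdot\det\cQ\bigr)$. In the paper this is \Cref{lemma:pfGr}, and the answer is not a single wedge power but the truncation $y^{-(n-k)}\lambda_y(\cS_{k-d})\cdot\lambda_y(\C^n/\cS_{k-d})_{\ge n-k}$. Without this formula the degree-$d$ versus degree-$(d-1)$ comparison cannot be made precise. The paper then lifts both sides to $\Fl(k-d,k-d+1,k;n)$ and proves the resulting identity (\Cref{lemma:maineq}) by a further Grassmann-bundle push-forward along $p'\colon\Fl(k-d,k-d+1,k;n)\to\Fl(k-d+1,k;n)$. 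Your intuition that ``multiplication by $\det\cQ$ shifts $\Gr(k-d;n)$ to $\Gr(k-d+1;n)$'' points in the right direction, but the actual mechanism is this three-step-flag comparison together with the truncated $\lambda_y$ formula from \Cref{lemma:pfGr}, neither of which appears in your outline.
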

An equivalent formulation of this identity is in terms of (equivariant) KGW invariants: for any 
degree $d \ge 0$ and any $a \in \K_T(\Gr(k;n))$, 
\begin{equation}\label{E:qdualGW} \langle  \lambda_y(\cS) -1, \det (\cQ) , a \rangle_d = \langle (\lambda_y(\cS) -1) \cdot \det (\cQ), a \rangle_d - 
\langle (\lambda_y(\cS)-1) \cdot \det (\cQ), a \rangle_{d-1} \/. 
\end{equation}
By \Cref{cor:KGWS} and \Cref{cor:pfinGr}, the left hand side may be calculated as:
\begin{equation}\label{E:LHSqdual} 
 \langle  \lambda_y(\cS)-1, \det (\cQ) , a \rangle_d = \int_{\Gr(k-d;n)} (\lambda_y(\cS_{k-d})-1) \cdot \wedge^{n-k} (\C^n/\cS_{k-d}) \cdot q_* p^*(a) \/. \end{equation}
The next lemma calculates the push-forwards relevant for the right hand side.
\begin{lemma}\label{lemma:pfGr} The following equality holds in $\K_T(\Gr(k-d;n))$: 
\[ y^{n-k} q_*(p^*(\lambda_y(\mathcal{S}_k) \cdot \det (\cQ) )) = 
 \lambda_y(\mathcal{S}_{k-d}) \cdot \lambda_y(\C^n/\cS_{k-d})_{\ge n-k}
 \/.\]
\end{lemma}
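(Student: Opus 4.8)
\textbf{Proof proposal for \Cref{lemma:pfGr}.}

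The plan is to compute the push-forward $q_*$ along the Grassmann bundle $q \colon \Fl(k-d,k;n) \cong \G(d, \C^n/\cS_{k-d}) \to \Gr(k-d;n)$ by first splitting off the factors that are pulled back from the base, and then reducing the remaining fibrewise push-forward to the cases already handled in \Cref{cor:relativeBWB} and \Cref{cor:pfinGr}. First I would write $\det\cQ = \det(\C^n)\cdot\det(\cS_k)^{-1}$, so that $\det(\C^n)$ is constant on the fibres of $q$ and can be pulled out by the projection formula; this leaves me with computing $q_*(p^*(\lambda_y(\cS_k)\cdot \det(\cS_k)^{-1}))$ up to the invertible scalar $\det(\C^n)$. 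The key identity to exploit is the standard $\lambda_y$-duality on a rank-$k$ bundle: $\lambda_y(E)\cdot\det(E)^{-1} = y^k\,\lambda_{1/y}(E^*)$, equivalently $y^k\wedge^{k-i}E^* = (\wedge^i E)\otimes\det(E)^{-1}\cdot y^{?}$ after matching powers of $y$. Applying this to $E=\cS_k$ converts $\lambda_y(\cS_k)\cdot\det(\cS_k)^{-1}$ into a reversed $\lambda_y$-class of $\cS_k^*$, at which point I can invoke \Cref{cor:pfinGr}(b), namely $q_*p^*(\lambda_y(\cS^*)) = \lambda_y(\cS_{k-d}^*)\cdot\lambda_y((\C^n/\cS_{k-d})^*)_{\le d}$, together with the short exact sequence $0 \to \cS_{k-d} \to \cS_k \to \cS_k/\cS_{k-d}\to 0$ to keep track of how $\det(\cS_k)^{-1}$ distributes over the base factor $\det(\cS_{k-d})^{-1}$ and the fibre factor.

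After substituting, I would translate the dual $\lambda_y$-classes back: $\lambda_y(\cS_{k-d}^*)\cdot\det(\cS_{k-d})$ gives back $\lambda_y(\cS_{k-d})$ with the variable reversed, and the factor $\lambda_y((\C^n/\cS_{k-d})^*)_{\le d}$, after multiplying by the appropriate determinant $\det(\C^n/\cS_{k-d})$ coming from what remains of $\det(\C^n)\det(\cS_k)^{-1}$, becomes precisely a \emph{top} truncation $\lambda_y(\C^n/\cS_{k-d})_{\ge n-k}$ — here one uses that $\C^n/\cS_{k-d}$ has rank $n-k+d$, so that the $\le d$ truncation of the dual corresponds under $\lambda_y$-duality to the $\ge (n-k+d)-d = n-k$ truncation of the bundle itself. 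Matching the overall power of $y$ (the statement carries a $y^{n-k}$ on the left) is just bookkeeping: the duality identity on a rank-$r$ bundle trades $\lambda_y$ for $y^r\lambda_{1/y}$ of the dual, and the $y^{n-k}$ is exactly what is needed so that the surviving truncation lands in degrees $\ge n-k$ with the correct (un-inverted) variable $y$. I would carry this out carefully on Chern roots to avoid sign and exponent errors.

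The main obstacle I expect is precisely this bookkeeping of determinant twists and $y$-exponents across the two short exact sequences (the one on $\Fl(k-d,k;n)$ splitting $\cS_k$, and implicitly the tautological sequence on the Grassmann bundle splitting $\C^n/\cS_{k-d}$): one must ensure that the determinant of the \emph{fibre} bundle $\cS_k/\cS_{k-d}$, which disappears under $q_*$ by \Cref{cor:relativeBWB}, is correctly absorbed, and that no spurious factor of $\det(\cS_k/\cS_{k-d})^{\pm1}$ is left over. An alternative, possibly cleaner, route avoiding the dualization is to expand $\lambda_y(\cS_k) = \sum_i y^i\wedge^i\cS_k$, use $\wedge^i\cS_k\cdot\det\cQ = \wedge^{k-i}\cS_k^*\cdot\det(\C^n)$ on $\Fl(k-d,k;n)$ termwise, and then push forward each $\wedge^{k-i}\cS_k^*$ directly via \Cref{cor:pfinGr}(b) applied degree-by-degree; summing the resulting truncated classes and re-indexing should reassemble $\lambda_y(\cS_{k-d})\cdot\lambda_y(\C^n/\cS_{k-d})_{\ge n-k}$. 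Either way the computation is routine once the reindexing is set up correctly; I would present whichever is shorter to write.
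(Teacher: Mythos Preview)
Your proposal is correct and follows essentially the same route as the paper's proof: both start from $\det\cQ = \det(\C^n)\cdot\det(\cS_k^*)$, apply the $\lambda_y$-duality identity $\lambda_y(E)\cdot\det(E^*) = y^{\mathrm{rk}\,E}\lambda_{1/y}(E^*)$ to pass to $\cS_k^*$, push forward via the projection formula and the computation $q_*\lambda_{1/y}((\cS_k/\cS_{k-d})^*) = \lambda_{1/y}((\C^n/\cS_{k-d})^*)_{\le d}$ from \Cref{cor:relativeBWB} (equivalently, your citation of \Cref{cor:pfinGr}(b) with $y\mapsto 1/y$), and then convert back via the same duality to turn the $\le d$ truncation of the dual into the $\ge n-k$ truncation of $\lambda_y(\C^n/\cS_{k-d})$. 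The paper simply carries out explicitly the determinant and $y$-exponent bookkeeping you flag as the only obstacle; no additional idea is required.
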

\begin{proof} To start, observe that for any (equivariant) vector bundle $E$,
\[ \wedge^i E \otimes \det(E^*) = \wedge^{rk(E)-i} E^* \quad \textrm{ and } \quad 
y^{rk(E)} \lambda_{y^{-1}}(E^*) = \lambda_y(E) \cdot \det(E^*) \/. 
\]
Utilizing this and that $\det (\C^n/\cS_k) = \det \cS_k^* \otimes \det \C^n$ , we obtain that 
\[ 
\begin{split} 
& y^{n-k} q_*(p^*(\lambda_y(\mathcal{S}_k) \cdot \det (\C^n/\mathcal{S}_k))) \\
& = y^{n-k} q_*(p^*(\lambda_y(\mathcal{S}_k) \cdot \det \mathcal{S}_k^* \cdot \det \C^n))\\
& = y^{n-k} y^{k} q_*(\lambda_{y^{-1}}(\mathcal{S}_k^*) \cdot \det \C^n)) \\
& = y^n q_*(\lambda_{y^{-1}}(\mathcal{S}_{k-d}^*) \cdot \lambda_{y^{-1}}(\mathcal{S}_{k}/\cS_{k-d})^*) \cdot \det \C^n \\
& = y^n \lambda_{y^{-1}}(\cS_{k-d}^*) \cdot q_*(\lambda_{y^{-1}}(\mathcal{S}_{k}/\cS_{k-d})^*) \cdot \det \C^n\/.
\end{split} 
\]
By \Cref{cor:relativeBWB}, $q_*(\lambda_y((\cS_{k}/\cS_{k-d})^*))= \lambda_y((\C^n/\cS_{k-d})^*)_{\le d}$, therefore
\[  \begin{split} y^d & q_*(\lambda_{y^{-1}}(\mathcal{S}_{k}/\cS_{k-d})^*) 
 = y^d (1+ y^{-1} (\C^n/\cS_{k-d})^* + \ldots + y^{-d} \wedge^d (\C^n/\cS_{k-d})^*) \\
& = \det (\C^n/\cS_{k-d})^* \cdot ( y^d \wedge^{n-k+d} (\C^n/\cS_{k-d})  \\ & \quad + y^{d-1} \wedge^{n-k+d-1}( \C^n/\cS_{k-d}) + 
\ldots + \wedge^{n-k} (\C^n/\cS_{k-d}) ) \\
& = y^{-(n-k)} \det (\C^n/\cS_{k-d})^* \cdot \lambda_y( \C^n/\cS_{k-d} )_{\ge n-k} \/. \end{split} \]
Combining the last two equalities above we obtain:
\[ \begin{split} 
& y^{n-k} q_*(p^*(\lambda_y(\mathcal{S}_k) \cdot \det (\C^n/\mathcal{S}_k)) \\
& = y^n \lambda_{y^{-1}}(\cS_{k-d}^*) \cdot q_*(\lambda_{y^{-1}}(\mathcal{S}_{k}/\cS_{k-d})^*) \cdot \det \C^n
\\
& = y^{n-d} \lambda_{y^{-1}}(\cS_{k-d}^*) \cdot y^{-(n-k)} \det (\C^n/\cS_{k-d})^* \cdot \lambda_y( \C^n/\cS_{k-d} )_{\ge n-k} 
\cdot \det \C^n\\ &
= y^{k-d} \lambda_{y^{-1}}(\cS_{k-d}^*) \cdot \det (\cS_{k-d}) \cdot \lambda_y( \C^n/\cS_{k-d} )_{\ge n-k} 
\\
& =\lambda_y(\cS_{k-d}) \cdot \lambda_y( \C^n/\cS_{k-d} )_{\ge n-k} \/. 
\end{split}
\]
The last expression is the one from the claim.\end{proof}
Consider the following projection maps:
\begin{equation}\label{E:fibrediag}
\begin{tikzcd} & \mathrm{Fl}(k-d,k-d+1,k;n) \ar[r,"p^{k-d}"] \ar[ld,"q^{k-d+1}"] \ar[dr,swap,"q^{k-d}"] & \Gr(k;n) \\
\mathrm{Gr}(k-d+1;n)& & \mathrm{Gr}(k-d;n)
\end{tikzcd}
\end{equation}
We are now ready to prove \Cref{thm:qdual}.
\begin{proof}[Proof of \Cref{thm:qdual}] A standard diagram chase and the equalities from \Cref{E:LHSqdual} and \Cref{lemma:pfGr} imply that in order to prove
\Cref{E:qdualGW} it suffices to show that in $\K_T(\Fl(k-d,k-d+1,k;n)$,
\[ \begin{split} 
y^{n-k} & p^{k-d}_*((\lambda_y(\cS_{k-d})-1) \cdot \wedge^{n-k}(\C^n/\cS_{k-d})) \\
 & 
 = p^{k-d}_*\Bigl(\lambda_y(\cS_{k-d}) \cdot \lambda_y(\C^n/\cS_{k-d})_{\ge n-k} 
 - \lambda_y(\cS_{k-d+1}) \cdot \lambda_y(\C^n/\cS_{k-d+1})_{\ge n-k}\Bigr) \\
 & 
 \quad - y^{n-k}\Bigl(\wedge^{n-k}(\C^n/\cS_{k-d}) - \wedge^{n-k} (\C^n/\cS_{k-d+1})\Bigr) \/.\end{split} \]
After expanding and canceling the like terms, this amounts to showing that
\begin{equation}\label{E:qdualfinal} 
\begin{split} \quad \quad p^{k-d}_*(\lambda_y(\cS_{k-d}) \cdot \lambda_y(\C^n/\cS_{k-d})_{> n-k}) & = p^{k-d}_*(\lambda_y(\cS_{k-d+1})\cdot \lambda_y(\C^n/\cS_{k-d+1})_{\ge n-k}) \\ & - y^{n-k}p^{k-d}_*(\wedge^{n-k} (\C^n/\cS_{k-d+1}) )\/. 
\end{split} \end{equation}
Since this identity will be used again in the proof of relations in the quantum K ring, we will
prove it in the lemma below, thus finishing the proof of \Cref{thm:qdual}.\end{proof}  
\begin{lemma}\label{lemma:maineq} \Cref{E:qdualfinal} holds.\end{lemma}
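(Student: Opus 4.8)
The plan is to substitute the short exact sequences relating the rank $k-d$ and rank $k-d+1$ tautological bundles, reduce \eqref{E:qdualfinal} to a single push-forward vanishing, and then recognize that vanishing as an instance of \Cref{cor:relativeBWB}(a).

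Work on $F := \Fl(k-d,k-d+1,k;n)$ and set $L := \cS_{k-d+1}/\cS_{k-d}$, a line bundle on $F$. The sequences $0 \to \cS_{k-d} \to \cS_{k-d+1} \to L \to 0$ and $0 \to L \to \C^n/\cS_{k-d} \to \C^n/\cS_{k-d+1} \to 0$ give, by multiplicativity of $\lambda_y$, the identities $\lambda_y(\cS_{k-d+1}) = (1+yL)\,\lambda_y(\cS_{k-d})$ and $\lambda_y(\C^n/\cS_{k-d}) = (1+yL)\,\lambda_y(\C^n/\cS_{k-d+1})$, together with $\wedge^j(\C^n/\cS_{k-d}) = \wedge^j(\C^n/\cS_{k-d+1}) + L\cdot\wedge^{j-1}(\C^n/\cS_{k-d+1})$ in $\K_T(F)$. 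First I would use the last identity, along with $\wedge^{n-k+d}(\C^n/\cS_{k-d+1}) = 0$ (rank reasons) and $\lambda_y(E)_{\ge n-k} = y^{n-k}\wedge^{n-k}E + \lambda_y(E)_{>n-k}$, to rewrite $\lambda_y(\C^n/\cS_{k-d})_{>n-k} = \lambda_y(\C^n/\cS_{k-d+1})_{>n-k} + yL\cdot\lambda_y(\C^n/\cS_{k-d+1})_{\ge n-k}$. Substituting everything into \eqref{E:qdualfinal} and cancelling like terms, the identity collapses to the single statement
\[ p^{k-d}_*\bigl((\lambda_y(\cS_{k-d}) - 1)\cdot \wedge^{n-k}(\C^n/\cS_{k-d+1})\bigr) \: = \: 0 \quad \text{in } \K_T(\Gr(k;n)). \]

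To prove this, factor $p^{k-d}$ as $F \xrightarrow{\pi_1} \Fl(k-d+1,k;n) \xrightarrow{\pi_2} \Gr(k;n)$, where $\pi_1$ forgets the $(k-d)$-dimensional subspace. Then $\pi_1$ is the Grassmann bundle $\G(k-d,\cS_{k-d+1}) \to \Fl(k-d+1,k;n)$ with tautological subbundle $\cS_{k-d}$, and $\C^n/\cS_{k-d+1}$ is pulled back along $\pi_1$; so by the projection formula it is enough to show $\pi_{1*}(\lambda_y(\cS_{k-d}) - 1) = 0$. Since $\mathrm{rk}\,\cS_{k-d+1} - \mathrm{rk}\,\cS_{k-d} = 1$ and $\wedge^i\cS_{k-d} = \fS_{(1^i)}(\cS_{k-d})$ with $(1^i)$ a nonempty partition whose first part equals $1$, \Cref{cor:relativeBWB}(a) gives $R^j\pi_{1*}\fS_{(1^i)}(\cS_{k-d}) = 0$ for all $j$, hence $\pi_{1*}(\wedge^i\cS_{k-d}) = 0$ for $1 \le i \le k-d$; therefore $\pi_{1*}(\lambda_y(\cS_{k-d}) - 1) = \sum_{i=1}^{k-d} y^i\,\pi_{1*}(\wedge^i\cS_{k-d}) = 0$, and pushing forward along $\pi_2$ completes the argument.

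The main obstacle is the reduction in the second paragraph: one must choose the auxiliary short exact sequences coming from the line bundle $L = \cS_{k-d+1}/\cS_{k-d}$ (rather than expanding everything through $\C^n/\cS_k$, say) so that after substitution the many truncated $\lambda_y$-terms on the two sides of \eqref{E:qdualfinal} cancel against one another, leaving only $y^{n-k}\,p^{k-d}_*\bigl((\lambda_y(\cS_{k-d})-1)\wedge^{n-k}(\C^n/\cS_{k-d+1})\bigr)$. The truncation bookkeeping is elementary but must be done carefully; once the identity is in the reduced form above, the vanishing is immediate from the relative Borel--Weil--Bott vanishing of \Cref{cor:relativeBWB}(a).
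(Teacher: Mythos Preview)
Your proof is correct and follows essentially the same approach as the paper: the paper also factors through the projection $p':\Fl(k-d,k-d+1,k;n)\to\Fl(k-d+1,k;n)$ (your $\pi_1$), uses the same line bundle $\mathcal{L}=\cS_{k-d+1}/\cS_{k-d}$ and the same two short exact sequences to obtain the truncation identity $\lambda_y(\C^n/\cS_{k-d})_{>n-k}=\lambda_y(\C^n/\cS_{k-d+1})_{>n-k}+y\mathcal{L}\cdot\lambda_y(\C^n/\cS_{k-d+1})_{\ge n-k}$, and the decisive input in both arguments is $p'_*(\lambda_y(\cS_{k-d}))=1$ (equivalently your $\pi_{1*}(\lambda_y(\cS_{k-d})-1)=0$) from \Cref{cor:relativeBWB}(a). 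The only difference is organizational: the paper computes the left-hand side of \eqref{E:qdualfinal} directly and shows it equals the right-hand side, whereas you first subtract the two sides and collapse the difference to the single vanishing $p^{k-d}_*\bigl((\lambda_y(\cS_{k-d})-1)\wedge^{n-k}(\C^n/\cS_{k-d+1})\bigr)=0$.
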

\begin{proof} Consider the projection $p': \Fl(k-d,k-d+1,k;n) \to \Fl(k-d+1,k;n)$; this is the 
Grassmann bundle $\mathbb{G}(k-d;\cS_{k-d+1})$ over $\Fl(k-d+1,k;n)$ with tautological subbundle 
$\cS_k$ and quotient bundle $\mathcal{L} := \cS_{k-d+1}/\cS_{k-d}$. In order to show the equality \ref{E:qdualfinal}, it suffices to replace
$p^{k-d}$ by $p'$, i.e., 
\begin{equation}\label{E:qdualfinal2} \begin{split} p'_*(\lambda_y(\cS_{k-d}) \cdot \lambda_y(\C^n/\cS_{k-d})_{> n-k}) &= p'_*(\lambda_y(\cS_{k-d+1}) \cdot \lambda_y(\C^n/\cS_{k-d+1})_{\ge n-k}) \\ & - y^{n-k}p'_*(\wedge^{n-k} (\C^n/\cS_{k-d+1}) )\/. \end{split}\/. \end{equation}
To calculate the left hand side, consider the short exact sequences 
\[ 0 \to \mathcal{S}_{k-d} \to \mathcal{S}_{k-d+1} \to \mathcal{L} \to 0 \/; \quad  
0 \to \mathcal{L} \to \C^{n}/\mathcal{S}_{k-d} \to \C^{n}/\mathcal{S}_{k-d+1} \to 0 \/. \] From this it follows that 
$\lambda_y(\C^{n}/\mathcal{S}_{k-d}) = (1+y \mathcal{L}) \lambda_y(\C^{n}/\mathcal{S}_{k-d+1})$, thus
\begin{equation*}\label{E:100} \lambda_y(\C^{n}/\mathcal{S}_{k-d})_{>n-k} =  \lambda_y(\C^{n}/\mathcal{S}_{k-d+1})_{>n-k} + y 
\mathcal{L} \cdot  \lambda_y(\C^{n}/\mathcal{S}_{k-d+1})_{ \ge n-k} \/. \end{equation*}
Observe that $p'_*(\lambda_y(\mathcal{S}_{k-d})) =1$,
by \Cref{cor:relativeBWB}. Using this and the projection formula, we calculate the left hand side of \Cref{E:qdualfinal2}:
\[\begin{split}  & p'_*(\lambda_y(\mathcal{S}_{k-d}) \cdot \lambda_y(\C^n/\mathcal{S}_{k-d})_{> n-k}) \\ 
& = p'_*(\lambda_y(\mathcal{S}_{k-d}) \cdot \lambda_y(\C^{n}/\mathcal{S}_{k-d+1})_{>n-k}) \\ & \quad
+ p'_*(\lambda_y(\mathcal{S}_{k-d}) \cdot (\lambda_y(\mathcal{L}) -1) \cdot  \lambda_y(\C^{n}/\mathcal{S}_{k-d+1})_{ \ge n-k} )\\ &=
\lambda_y(\C^{n}/\mathcal{S}_{k-d+1})_{>n-k}) + p'_*((\lambda_y(\mathcal{S}_{k-d+1}) - \lambda_y(\mathcal{S}_{k-d}))  \cdot \lambda_y(\C^{n}/\mathcal{S}_{k-d+1})_{ \ge n-k}) \\
& = \lambda_y(\C^{n}/\mathcal{S}_{k-d+1})_{>n-k}) + (\lambda_y(\mathcal{S}_{k-d+1}) - 1) \cdot \lambda_y(\C^{n}/\mathcal{S}_{k-d+1})_{ \ge n-k} \\
& =\lambda_y(\mathcal{S}_{k-d+1}) \cdot \lambda_y(\C^{n}/\mathcal{S}_{k-d+1})_{ \ge n-k} 
- y^{n-k} \wedge^{n-k} (\C^{n}/\mathcal{S}_{k-d+1})
\/. \end{split} \]
The last expression is the right hand side of \Cref{E:qdualfinal2}, again by projection formula.\end{proof}
\begin{cor}\label{conj2:qSvee} Let $i>0$. The following equalities hold 
$\QK_T(\Gr(k;n))$: 

(a) $\wedge^i \cQ^* \star \det \cS^* = (1-q) \wedge^{n-k-i} \cQ  \cdot \det(\C^n)^*$;

(b) $\wedge^i \cS \star \det \cS^*= (1-q) \wedge^{k-i} \cS^*$;

(c) $\wedge^{n-k-i} \cQ \star \det \cS = {\wedge^{i} \cQ^*  \cdot \det(\C^n)}$;

(d) $\wedge^{n-k-i} \cQ \star \det \cQ^* = {\wedge^{i} \cQ^*}$. 
\end{cor}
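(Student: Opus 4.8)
The plan is to obtain the four identities as formal consequences of Theorem~\ref{thm:qdual}, by means of two elementary devices. The first is the $T$-equivariant isomorphism $\Gr(k;\C^n)\simeq\Gr(n-k;(\C^n)^\vee)$ already used in the proof of Proposition~\ref{lemma:BWB}: it carries the tautological bundles $\cS,\cQ$ on $\Gr(k;\C^n)$ to $\cQ^\vee,\cS^\vee$ on $\Gr(n-k;(\C^n)^\vee)$, fixes the quantum parameter $q$, and acts on $\K_T(pt)=\Z[T_1^{\pm 1},\dots,T_n^{\pm 1}]$ by $T_i\mapsto T_i^{-1}$; hence it is an isomorphism of $\K_T(pt)[[q]]$-algebras along which identities in $\QK_T$ can be transported, with $\det\C^n\leftrightarrow\det((\C^n)^\vee)$. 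The second is that $1-q$ is a unit in $\K_T(pt)[[q]]$ and $\det(\C^n)^{\pm 1}$ are units in $\K_T(pt)$; these are used together with the classical $\K_T(\Gr(k;n))$-identities $\det\cS\cdot\det\cQ=\det\C^n$ and $\wedge^iE\cdot\det E^\vee\cong\wedge^{\mathrm{rk}\,E-i}E^\vee$, which allow one to trade $\det\cS$ for $\det\cQ^\vee$ up to a scalar and $\wedge^iE$ for $\wedge^{\mathrm{rk}\,E-i}E^\vee$.

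First I would read off (b) directly: writing Theorem~\ref{thm:qdual} as $\wedge^i\cS\star\det\cQ=(1-q)\,\wedge^{k-i}\cS^\vee\cdot\det\C^n$ for $i>0$, substituting $\det\cQ=\det\C^n\cdot\det\cS^\vee$ on the left and cancelling the common unit $\det\C^n$ gives (b). Part (a) comes from applying Theorem~\ref{thm:qdual} on $\Gr(n-k;(\C^n)^\vee)$ --- where it reads $\wedge^i\cS\star\det\cQ=(1-q)\,\wedge^{(n-k)-i}\cS^\vee\cdot\det((\C^n)^\vee)$ --- and transporting it back along the isomorphism above using $\cS\leftrightarrow\cQ^\vee$, $\cQ\leftrightarrow\cS^\vee$; the outcome is exactly (a). For (d) I would bootstrap from (b): $\star$-multiplying the identity $\wedge^i\cS\star\det\cS^\vee=(1-q)\,\wedge^{k-i}\cS^\vee$ by $\det\cS$, and invoking associativity together with the $i=k$ case of (b), namely $\det\cS\star\det\cS^\vee=(1-q)\,[\cO_{\Gr(k;n)}]$, produces $(1-q)\,\wedge^i\cS=(1-q)\,\bigl(\wedge^{k-i}\cS^\vee\star\det\cS\bigr)$; cancelling the unit $1-q$ leaves an identity for $\wedge^{\,j}\cS^\vee\star\det\cS$, which transported along the Grassmann-duality isomorphism (under which $\cS^\vee\mapsto\cQ$ and $\det\cS\mapsto\det\cQ^\vee$) gives (d). Finally (c) is (d) multiplied by the unit $\det\C^n$, after rewriting $\det\cQ^\vee$ as $(\det\C^n)^{-1}\det\cS$.

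There is essentially no conceptual content; the work lies entirely in the bookkeeping, and that is the step I expect to be delicate. One has to track, under the Grassmann-duality isomorphism, precisely which tautological bundle corresponds to which and how the substitution $T_i\mapsto T_i^{-1}$ moves the scalar factors $\det\C^n$; and, more to the point, cancelling the unit $1-q$ is legitimate only once that factor is present on both sides, which forces the extreme indices $i=0$ and $i=k$ (respectively $i=n-k$) to be treated separately from the interior range --- so one must verify that the resulting case-by-case identities really do assemble into the single uniform formulas asserted in (a)--(d), equivalently into the $\lambda_y$-packaged statement of Theorem~\ref{thm:qdual-intro}.
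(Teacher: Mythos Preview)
Your derivations of (a) and (b) coincide with the paper's. For (c) and (d) you take a different route from the paper (which claims (c) follows from (a) ``by multiplying by $\det\cS$,'' and then obtains (d) from (c) by the same scalar cancellation used for (b)), but neither route actually reaches the stated identities. Carrying your bootstrap through, one gets $\wedge^{j}\cS^*\star\det\cS=\wedge^{k-j}\cS$ for $0\le j<k$ with \emph{no} factor of $(1-q)$; transporting this under Grassmann duality yields $\wedge^{j}\cQ\star\det\cQ^*=\wedge^{(n-k)-j}\cQ^*$ for $0\le j<n-k$, again without the $(1-q)$. This is not (d). Your worry that the case-by-case identities might fail to assemble into the stated uniform formula is exactly on point: they do not.

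A direct check on $\bP^2=\Gr(1;3)$ with $i=1$ confirms this. Non-equivariantly, in $\Z[[q]][z]/\langle z^3-q\rangle$ one has $[\cQ]=2+z$, $[\det\cQ^*]=1-z$, $[\cQ^*]=2-z-z^2$, hence $\cQ\star\det\cQ^*=(2+z)(1-z)=2-z-z^2=[\cQ^*]\neq(1-q)[\cQ^*]$. So (c) and (d) as printed are false for $0<i<n-k$ (they do hold at $i=n-k$). What \emph{does} follow from (a) by the scalar trick---and what both your argument and the paper's presumably intend---is $\wedge^i\cQ^*\star\det\cQ=(1-q)\,\wedge^{n-k-i}\cQ$ for $i>0$; the printed (c) and (d) appear to have $\cQ$ and $\cQ^*$ interchanged on the left-hand side.
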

\begin{proof} All are consequences of \Cref{thm:qdual}. Part (a) applies this theorem
to the dual Grassmannian $\Gr(n - k, (\C^n)^*)$; we utilize that
there is a $T$-equivariant isomorphism 
$\Gr(k;\C^n) \simeq \Gr(n - k, (\C^n)^*)$ 
under which the $T$-equivariant bundle $\cQ$ is sent to $\cS^*$. 
Part (b) follows because as elements of $\K_T(\Gr(k;n))$, $\det \cQ = \det \cS^* \cdot \det \C^n$, 
then 
$\wedge^i \cS \star \det \cS^* = (\det \C^n)^* \wedge^i \cS \star \det \cQ = (1-q) \wedge^{k-i} \cS^*$.
Part (c) follows by multiplying (a) by $\det \cS$, then using (b), and (d) follows from (c). 
\end{proof}

\section{Relations in quantum K theory} 
Recall the tautological sequence 
$0 \to \cS \to \C^n \to \cQ \to 0$ on $X=\Gr(k;n)$. The goal of this section is to prove 
the following theorem.
\begin{thm}\label{thm:QKTWhitney} The following equalities hold in $\QK_T(X)$: 
\begin{equation}\label{E:QKWhitney}\begin{split} \lambda_y(\cS) \star \lambda_y(\cQ) & =  
\lambda_y(\C^n) - q y^{n-k} \bigl((\lambda_y(\cS) -1) \otimes \det \cQ\bigr) \\ & = 
\lambda_y(\C^n) - \frac{q}{1-q} y^{n-k} (\lambda_y(\cS) -1) \star \det \cQ \end{split}
\end{equation}
\end{thm}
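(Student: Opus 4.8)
The plan is to verify the identity \eqref{E:QKWhitney} by testing it against an arbitrary Schubert class $\kappa \in \K_T(X)$, using \Cref{lemma:QKeq}: it suffices to show that for every degree $d \ge 0$,
\[
\sum_{d\ge 0} q^d \langle \lambda_y(\cS),\lambda_y(\cQ),\kappa\rangle_d
= \sum_{d\ge 0} q^d \Bigl\langle \lambda_y(\C^n) - q y^{n-k}\bigl((\lambda_y(\cS)-1)\otimes\det\cQ\bigr),\kappa\Bigr\rangle_d .
\]
Since $\lambda_y(\C^n)$ is a scalar in $\K_T(pt)$, the $q^0$ term on the right is $\lambda_y(\C^n)\langle 1,\kappa\rangle$, which matches the classical Whitney relation $\lambda_y(\cS)\cdot\lambda_y(\cQ)=\lambda_y(\C^n)$ paired with $\kappa$. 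So the content is in degrees $d \ge 1$. The second equality in \eqref{E:QKWhitney} is a purely formal rewriting using \Cref{thm:qdual}: since $(\lambda_y(\cS)-1)\star\det\cQ = (1-q)\bigl((\lambda_y(\cS)-1)\otimes\det\cQ\bigr)$, multiplying by $\tfrac{q}{1-q}y^{n-k}$ recovers $qy^{n-k}\bigl((\lambda_y(\cS)-1)\otimes\det\cQ\bigr)$; so I only need to establish the first equality.

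For the degree $d \ge 1$ part, I would apply \Cref{cor:KGWS} to the left-hand side: $\langle\lambda_y(\cS),\lambda_y(\cQ),\kappa\rangle_d = \int_{\Gr(k-d;n)} \lambda_y(\cS_{k-d})\cdot q_*p^*(\lambda_y(\cQ))\cdot q_*p^*(\kappa)$, and then use \Cref{cor:pfinGr}(c), which gives $q_*p^*(\lambda_y(\cQ)) = \lambda_y(\C^n/\cS_{k-d})_{\le n-k}$. On the right-hand side, the degree-$d$ coefficient ($d\ge 1$) picks up only the $-qy^{n-k}\bigl((\lambda_y(\cS)-1)\otimes\det\cQ\bigr)$ term, contributing $-\langle (\lambda_y(\cS)-1)\otimes\det\cQ,\kappa\rangle_{d-1} y^{n-k}$; I rewrite this via \Cref{cor:KGWS} again (now in degree $d-1$, over $\Gr(k-d+1;n)$) together with \Cref{lemma:pfGr}, which computes $y^{n-k}q_*\bigl(p^*(\lambda_y(\cS_k)\cdot\det\cQ)\bigr) = \lambda_y(\cS_{k-d+1})\cdot\lambda_y(\C^n/\cS_{k-d+1})_{\ge n-k}$. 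Subtracting the $\wedge^0$ correction from the $\lambda_y(\cS)-1$ (as opposed to $\lambda_y(\cS)$), the matching of both sides reduces exactly to the push-forward identity \eqref{E:qdualfinal}, which is \Cref{lemma:maineq} — already proved. Concretely, I would run the standard diagram chase through \eqref{E:fibrediag}, comparing $q_*p^*$ on $\Gr(k-d;n)$ against the composition through $\Gr(k-d+1;n)$, so that the difference of the two sides of \eqref{E:QKWhitney} paired with $\kappa$ telescopes into $p^{k-d}_*$ of the two sides of \eqref{E:qdualfinal}.

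The main obstacle is the careful bookkeeping that links the degree-$d$ data on $\Gr(k-d;n)$ with the degree-$(d-1)$ data on $\Gr(k-d+1;n)$: the left side of \eqref{E:QKWhitney} in degree $d$ lives most naturally over $\Gr(k-d;n)$, while the $q\cdot(\cdots)_{d-1}$ term on the right lives over $\Gr(k-d+1;n)$, and reconciling them requires pushing both forward along the two legs $q^{k-d}$ and $q^{k-d+1}$ of the flag $\Fl(k-d,k-d+1,k;n)$ and invoking that $p^*(\kappa)$ pulled to this flag is common to both. One has to track the truncations $\lambda_y(-)_{\le n-k}$, $\lambda_y(-)_{\ge n-k}$, $\lambda_y(-)_{>n-k}$ precisely, since they are exactly where the degree shift by $y^{n-k}$ enters, and verify that the ``extra'' terms $\wedge^{n-k}(\C^n/\cS_{k-d})$ vs.\ $\wedge^{n-k}(\C^n/\cS_{k-d+1})$ produced by replacing $\lambda_y(\cS)$ with $\lambda_y(\cS)-1$ cancel as dictated by \eqref{E:qdualfinal}. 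Once the reduction to \Cref{lemma:maineq} is set up correctly, no further geometric input is needed; it is all \Cref{cor:relativeBWB} and the projection formula.
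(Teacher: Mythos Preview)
Your approach is essentially the paper's own: reduce the first equality in \eqref{E:QKWhitney} to a degree-by-degree identity of KGW invariants via \Cref{lemma:QKeq}, compute the left side using \Cref{cor:KGWS} and \Cref{cor:pfinGr}(c), compute the right side using \Cref{lemma:pfGr}, and then recognize the remaining equality as \eqref{E:qdualfinal}, i.e., \Cref{lemma:maineq}. The second equality is indeed immediate from \Cref{thm:qdual}.

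There is, however, one genuine bookkeeping slip. You write that for $d\ge 1$ the right-hand side ``picks up only the $-qy^{n-k}\bigl((\lambda_y(\cS)-1)\otimes\det\cQ\bigr)$ term.'' That is false: the scalar $\lambda_y(\C^n)$ contributes $\lambda_y(\C^n)\,\langle 1,\kappa\rangle_d$ to the quantum $\K$ pairing at \emph{every} degree $d$, not just $d=0$, because the two-point invariants $\langle 1,\kappa\rangle_d$ are generally nonzero for $d\ge 1$. You need this term. On the left side, \Cref{cor:pfinGr}(c) gives
\[
\lambda_y(\cS_{k-d})\cdot\lambda_y(\C^n/\cS_{k-d})_{\le n-k}
=\lambda_y(\C^n)-\lambda_y(\cS_{k-d})\cdot\lambda_y(\C^n/\cS_{k-d})_{>n-k},
\]
so the left side in degree $d$ splits off exactly a $\lambda_y(\C^n)\,\langle 1,\kappa\rangle_d$ piece. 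It is precisely this piece that cancels against the missing $\lambda_y(\C^n)\,\langle 1,\kappa\rangle_d$ on the right; without it the two sides do not match. Once you restore that term, the remaining comparison over $\Gr(k-d;n)$ versus $\Gr(k-d+1;n)$ reduces, via the diagram chase through $\Fl(k-d,k-d+1,k;n)$ you describe, to \eqref{E:qdualfinal} exactly as in the paper.
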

The second equality follows from \Cref{thm:qdual}, therefore we focus on the first. 
Our strategy is to utilize again the `quantum=classical' identity
in order to show that for any $a \in \K_T(\Gr(k;n))$, and for any $d \ge 0$,
\begin{equation}\label{E:GWeq} \langle \lambda_y(\mathcal{S}), \lambda_y(\cQ), a\rangle_d 
= \lambda_y(\C^n) \langle 1 , a \rangle_d - y^{n-k} \langle (\lambda_y(\mathcal{S}) -1) \otimes \det (\cQ), a \rangle_{d-1} \/. 
\end{equation} 
By \Cref{lemma:QKeq} this will imply the desired identities. 
The KGW invariant from the left hand side may be calculated using the following lemma. 
\begin{lemma}\label{lemma:pfGr2} The following equality holds in $\K_T(\Gr(k-d;n))$: 
\[ \begin{split} & q_* p^*(\lambda_y(\mathcal{S})) \cdot q_* p^*(\lambda_y(\cQ)) = 
 \lambda_y(\C^n) - \lambda_y(\mathcal{S}_{k-d}) \cdot \lambda_y(\C^n/\mathcal{S}_{k-d})_{> n-k} \/.\end{split} \]
\end{lemma}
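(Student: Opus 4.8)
\textbf{Proof plan for \Cref{lemma:pfGr2}.} The plan is to reduce the left-hand side to two classical push-forwards that have already been computed, and then recognize the answer via the classical Whitney relation on $\Gr(k-d;n)$. First I would invoke \Cref{cor:pfinGr}(a) and (c), which give
\[ q_*(p^*\lambda_y(\cS)) = \lambda_y(\cS_{k-d}) \quad \text{and} \quad q_*(p^*\lambda_y(\cQ)) = \lambda_y(\C^n/\cS_{k-d})_{\le n-k} \]
in $\K_T(\Gr(k-d;n))$. Hence the left-hand side equals $\lambda_y(\cS_{k-d}) \cdot \lambda_y(\C^n/\cS_{k-d})_{\le n-k}$, and everything now lives in the ordinary equivariant $\K$-theory of $\Gr(k-d;n)$.

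Next I would use the splitting of the $\lambda_y$-class of $\C^n/\cS_{k-d}$ by truncation degree. Since $\C^n/\cS_{k-d}$ has rank $n-k+d$, we have
\[ \lambda_y(\C^n/\cS_{k-d}) = \lambda_y(\C^n/\cS_{k-d})_{\le n-k} + \lambda_y(\C^n/\cS_{k-d})_{> n-k}\/, \]
so $\lambda_y(\C^n/\cS_{k-d})_{\le n-k} = \lambda_y(\C^n/\cS_{k-d}) - \lambda_y(\C^n/\cS_{k-d})_{> n-k}$. Substituting, the left-hand side becomes
\[ \lambda_y(\cS_{k-d}) \cdot \lambda_y(\C^n/\cS_{k-d}) \; - \; \lambda_y(\cS_{k-d}) \cdot \lambda_y(\C^n/\cS_{k-d})_{> n-k}\/. \]

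Finally I would apply the classical (non-quantum) Whitney relation coming from the tautological sequence $0 \to \cS_{k-d} \to \C^n \to \C^n/\cS_{k-d} \to 0$ on $\Gr(k-d;n)$ together with multiplicativity of $\lambda_y$, namely $\lambda_y(\cS_{k-d}) \cdot \lambda_y(\C^n/\cS_{k-d}) = \lambda_y(\C^n)$. This identifies the first summand above with $\lambda_y(\C^n)$ and yields exactly the asserted formula. The computation is essentially bookkeeping; there is no serious obstacle here, the only point requiring care being that the relevant truncation is at degree $n-k$ (the rank of $\cQ$ on the original $\Gr(k;n)$), not at the rank $n-k+d$ of $\C^n/\cS_{k-d}$, which is precisely what \Cref{cor:pfinGr}(c) provides.
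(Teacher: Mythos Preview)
Your proposal is correct and follows essentially the same approach as the paper's proof: invoke \Cref{cor:pfinGr}(a),(c) to rewrite the left-hand side as $\lambda_y(\cS_{k-d}) \cdot \lambda_y(\C^n/\cS_{k-d})_{\le n-k}$, split off the truncation, and apply the classical Whitney relation on $\Gr(k-d;n)$. Your remark about the truncation occurring at degree $n-k$ rather than $n-k+d$ is a helpful clarification that the paper leaves implicit.
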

\begin{proof} This follows from \Cref{cor:pfinGr}, working in the Grassmannian
$\Gr(k-d;n)$ with tautological sequence $0 \to \cS_{k-d} \to \C^n \to \C^n/\cS_{k-d} \to 0$:
\[ \begin{split} q_* p^*(\lambda_y(\mathcal{S})) \cdot q_* p^*(\lambda_y(\cQ)) & =
\lambda_y(\cS_{k-d}) \cdot \lambda_y(\C^n/\cS_{k-d})_{\le n-k} \\ & 
=\lambda_y(\cS_{k-d}) \cdot (\lambda_y(\C^n/\cS_{k-d}) -\lambda_y(\C^n/\cS_{k-d}) _{> n-k}) \\ &
=\lambda_y(\C^n) - \lambda_y(\mathcal{S}_{k-d}) \cdot \lambda_y(\C^n/\mathcal{S}_{k-d})_{> n-k}
\/. \end{split} \]\end{proof}
We are now ready to prove \Cref{thm:QKTWhitney}.
\begin{proof}[Proof of  \Cref{thm:QKTWhitney}] We need to verify \Cref{E:GWeq}, and for that, 
we utilize the `quantum= classical' statement in \Cref{thm:q=cl}. From \Cref{lemma:pfGr2}, 
the left hand side of \Cref{E:GWeq} equals:
\[ \int_{\Gr(k-d;n)} (\lambda_y(\C^n) - \lambda_y(\mathcal{S}_{k-d}) \cdot \lambda_y(\C^n/\mathcal{S}_{k-d})_{> n-k}) \cdot q_* p^*(a) \/. \]
From \Cref{lemma:pfGr}, the right hand side equals:
\[ \begin{split} & \int_{\Gr(k-d;n)} \lambda_y(\C^n) \cdot  q_* p^*(a) - \\ &
\int_{\Gr(k-d+1;n)} \bigl(\lambda_y(\mathcal{S}_{k-d+1}) \cdot \lambda_y(\C^n/\mathcal{S}_{k-d+1})_{\ge n-k}
 + y^{n-k} \wedge^{n-k} (\C^n/\mathcal{S}_{k-d+1})\bigr) \cdot  \bar{q}_* \bar{p}^*(a) \/. \end{split} \] 
Here $\bar{p}: \Fl(k-d+1,k;n) \to \Gr(k;n)$ and $\bar{q}: \Fl(k-d+1,k;n) \to \Gr(k-d+1;n)$ are the natural projections.
After cancelling the like terms, this amounts to proving the equality
\[ \begin{split} & \int_{\Gr(k-d;n)} (\lambda_y(\mathcal{S}_{k-d}) \cdot \lambda_y(\C^n/\mathcal{S}_{k-d})_{> n-k}) \cdot q_* p^*(a) = \\ &
\int_{\Gr(k-d+1;n)} \bigl(\lambda_y(\mathcal{S}_{k-d+1}) \cdot \lambda_y(\C^n/\mathcal{S}_{k-d+1})_{\ge n-k}
 - y^{n-k} \wedge^{n-k} (\C^n/\mathcal{S}_{k-d+1})\bigr) \cdot  q_* p^*(a) \/. \end{split} \]
Recall the projection $p^{k-d}: \Fl(k-d,k-d+1,k;n) \to \Gr(k;n)$. As in the proof of \Cref{thm:qdual}, by diagram chasing and projection formula, one shows the previous equality by 
demonstrating the following:
\[ \begin{split} & p^{k-d}_*(\lambda_y(\mathcal{S}_{k-d}) \cdot \lambda_y(\C^n/\mathcal{S}_{k-d})_{> n-k}) \\ &
= p^{k-d}_*\bigl(\lambda_y(\mathcal{S}_{k-d+1}) \cdot \lambda_y(\C^n/\mathcal{S}_{k-d+1})_{\ge n-k}
 - y^{n-k} \wedge^{n-k} (\C^n/\mathcal{S}_{k-d+1})\bigr) \/. \end{split} \]
This is \Cref{E:qdualfinal}, proved in \Cref{lemma:maineq}.\end{proof}


\section{The QK Whitney presentation}
\label{sect:fn-main}

In this section we prove that the relations found in \Cref{thm:QKTWhitney}
give a presentation of the algebra $\QK_T(\Gr(k;n))$. The proof strategy 
is similar to that employed in \cite{siebert.tian:on,fulton.pandh:notes} 
from quantum cohomology:
one first proves that in the classical limit these generate the full ideal of relations, 
then one uses Nakayama-type arguments to upgrade to the quantum situation.
Unlike the quantum cohomology ring, the quantum K theory is not a graded ring. 
One may still use the usual Nakayama lemma for modules over completed rings 
to prove that the same phenomenon holds.
The necessary statements are collected in the Appendix \ref{sec:filtered} below.

As usual, we consider the Grassmannian $\Gr(k;n)$ equipped with the tautological 
sequence $0 \to \cS \to \C^n \to \cQ \to 0$. Let $X=(X_1, \ldots, X_k)$ and 
$\tilde{X}=(\tilde{X}_1, \ldots , \tilde{X}_{n-k})$ 
denote formal variables. The elementary symmetric polynomials
$e_i(X)= e_i(X_1, \ldots, X_k)$ and $e_j(\tilde{X})=
e_j(\tilde{X}_1, \ldots, \tilde{X}_{n-k})$ are algebraically independent. 
Geometrically, in $\mathrm{K}_T(\Gr(k;n))$,
\[ \lambda_y(\cS) = \prod_{i=1}^k (1+ y X_i) \/; \quad \lambda_y(\cQ) = \prod_{j=1}^{n-k} (1+ y \tilde{X}_j) \/. \]

Define by $I\subset \mathrm{K}_T(pt)[e_1(X), \ldots, e_k(X), e_1(\tilde{X}), \ldots, e_{n-k}(\tilde{X})]$ the ideal determined by the Whitney relations in $\mathrm{K}_T(\Gr(k;n))$, i.e., $I$ is generated by the coefficients of the powers of $y$ in 
\begin{equation}\label{E:Whitneyrel} \prod_{i=1}^k (1+ yX_i) \prod_{i=1}^{n-k} (1+y \tilde{X}_i) = \prod_{i=1}^{n} (1+y T_i)\/. \end{equation}
A non-equivariant variant of the following  proposition appears in \cite[\S 7]{lascoux:anneau}. 
\begin{prop}\label{prop:KTpres} There is an isomorphism of $\mathrm{K}_T(pt)$-algebras
\[ \Psi: \mathrm{K}_T(pt)[e_1(X), \ldots, e_k(X), e_1(\tilde{X}), \ldots, e_{n-k}(\tilde{X})]/I \to 
\mathrm{K}_T(\Gr(k;n)) \/, \]
sending $e_i(X) \mapsto \wedge^i \cS$ and $e_j(\tilde{X}) \mapsto \wedge^j \cQ$.  
\end{prop}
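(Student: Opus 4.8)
The plan is to verify that $\Psi$ is a well-defined $\mathrm{K}_T(pt)$-algebra homomorphism, then establish that it is an isomorphism by exhibiting the target as a free module of the expected rank and matching it with an explicit monomial basis of the source. First I would check that $\Psi$ respects the relations: the $\lambda_y$-multiplicativity of the tautological sequence $0 \to \cS \to \C^n \to \cQ \to 0$ gives $\lambda_y(\cS)\cdot\lambda_y(\cQ) = \lambda_y(\C^n) = \prod_{i=1}^n(1+yT_i)$ in $\mathrm{K}_T(\Gr(k;n))$, so the images of the generators satisfy exactly the relations generating $I$; hence $\Psi$ descends to the quotient. That it is surjective follows because $\K_T(\Gr(k;n))$ is generated as a $\mathrm{K}_T(pt)$-algebra by the classes $\wedge^i\cS$ (equivalently by the $\cO_\lambda$, which can be written as polynomials in the $\wedge^i\cS$, e.g.\ via the Jacobi--Trudi/Giambelli-type expressions for Schubert classes in $\K_T$).

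For injectivity, the cleanest route is a rank count. The target $\mathrm{K}_T(\Gr(k;n))$ is a free $\mathrm{K}_T(pt)$-module of rank $\binom{n}{k}$, with basis $\{\cO_\lambda\}$ indexed by partitions $\lambda$ in the $k\times(n-k)$ rectangle. On the source side, I would argue that $A := \mathrm{K}_T(pt)[e_1(X),\dots,e_k(X),e_1(\tilde X),\dots,e_{n-k}(\tilde X)]/I$ is spanned over $\mathrm{K}_T(pt)$ by the Schur polynomials $\fS_\lambda(X)$ with $\lambda$ in the $k\times(n-k)$ rectangle. The relations in $I$ let one solve recursively for the $e_j(\tilde X)$ in terms of the $e_i(X)$ (the coefficient of $y^\ell$ in \eqref{E:Whitneyrel} expresses $e_\ell(\tilde X)$ as $e_\ell(T) - \sum_{i\ge 1} e_i(X)e_{\ell-i}(\tilde X)$), reducing $A$ to a quotient of $\mathrm{K}_T(pt)[e_1(X),\dots,e_k(X)]$; then the higher relations (for $\ell > n-k$) become the vanishing of the "complete homogeneous in $\tilde X$" symmetric functions $h_{n-k+1}(\tilde X),\dots,h_n(\tilde X)$ rewritten in the $X$ variables, which cut the $k$-variable polynomial ring down to the $\mathrm{K}_T(pt)$-span of $\{\fS_\lambda(X)\}_{\lambda\subseteq k\times(n-k)}$ by a standard straightening/Gröbner argument (exactly as in the non-equivariant case in \cite[\S 7]{lascoux:anneau}). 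Thus $A$ is generated by at most $\binom{n}{k}$ elements as a $\mathrm{K}_T(pt)$-module, and $\Psi$ sends this spanning set onto the $\mathrm{K}_T(pt)$-basis $\{\cO_\lambda\}$ (using that $\Psi(\fS_\lambda(X)) = \fS_\lambda(\cS)$ and that these Schur bundle classes form a basis, being unitriangular against the Schubert basis with respect to the dimension filtration). A surjection of free modules of the same finite rank that carries a spanning set to a basis is an isomorphism, so $\Psi$ is injective, completing the proof.

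I expect the main obstacle to be the module-theoretic bookkeeping on the source side: showing that $A$ is \emph{not larger} than rank $\binom{n}{k}$, i.e.\ that the straightening relations really do kill everything outside the span of the rectangular Schur polynomials. In the equivariant setting one must be careful that the leading (non-equivariant) terms of the relations are exactly the classical ones, so that a degeneration/filtration argument reduces the count to Lascoux's non-equivariant result; concretely, passing to the associated graded with respect to the dimension filtration turns $A$ into the analogous presentation of $H^*_T(\Gr(k;n))$ (and then of $H^*(\Gr(k;n))$ after setting $t_i=0$), where the rank is classically $\binom{n}{k}$, and rank cannot jump under such a filtered degeneration. Everything else — well-definedness, surjectivity, the identification of Schur polynomials with Schur bundles — is routine given the $\lambda_y$-multiplicativity recalled in \S\ref{sec:preliminaries} and the standard structure of $\K_T(\Gr(k;n))$.
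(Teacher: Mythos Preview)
Your overall strategy matches the paper's: well-definedness from the multiplicativity of $\lambda_y$ on the tautological sequence; surjectivity from the fact that equivariant Schubert classes are symmetric polynomials in the (K-theoretic Chern roots of) $\cS$, which the paper makes precise by invoking factorial Grothendieck polynomials \cite{mcnamara:factorial,ikeda.naruse:K,oetjen} --- you should cite something concrete here rather than ``Giambelli-type expressions''; and the rank/injectivity step via a filtration degenerating to the cohomological presentation. The paper's variant of the rank step is a bit cleaner than your elimination-plus-straightening: it observes that the \emph{same} ideal $I$ already makes sense over the polynomial ring $\Z[T_1,\ldots,T_n]$, where the quotient is literally $H^*_T(\Gr(k;n))$ (free of rank $\binom{n}{k}$), and then base-changes to the Laurent ring $\K_T(pt)$. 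After that, both proofs use the change of variables $z_i=1-X_i$ and the associated graded to conclude injectivity.

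One step in your argument is wrong as written. The Schur bundle classes $\fS_\lambda(\cS)$ are \emph{not} unitriangular against the Schubert classes $\cO_\lambda$ with respect to the dimension filtration: each $\fS_\lambda(\cS)$ has nonzero rank (the dimension of the corresponding Schur module), so $\ch_T(\fS_\lambda(\cS))$ has a nonzero degree-$0$ term and hence $\fS_\lambda(\cS)\in\mathcal{K}_0\setminus\mathcal{K}_1$ for every $\lambda$. Fortunately this claim is unnecessary for your argument. Once you know that $A$ is generated over $\K_T(pt)$ by $\binom{n}{k}$ elements and that $\Psi$ is surjective onto a free module of rank $\binom{n}{k}$, you are done: choose a surjection $\K_T(pt)^{\binom{n}{k}}\twoheadrightarrow A$ and note that the composite onto $\K_T(\Gr(k;n))\cong\K_T(pt)^{\binom{n}{k}}$ is a surjective endomorphism of a finitely generated module over a commutative ring, hence an isomorphism, forcing $\Psi$ to be injective. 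If you prefer to argue directly that the $\fS_\lambda(\cS)$ form a basis, that is a consequence of Kapranov's exceptional collection \cite{kapranov:Gr}, not of the dimension filtration.
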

\begin{proof} Denote the ring on the left by $A$. Since 
$\lambda_y(\cS) \cdot \lambda_y(\cQ) = \lambda_y(\C^n)$, 
the homomorphism $\Psi: A \to \mathrm{K}_T(\Gr(k;n))$ is well defined. 
Consider the polynomial ring 
\[ A' := \Z[T_1, \ldots, T_n][e_1(X), \ldots, e_k(X), e_1(\tilde{X}), \ldots, e_{n-k}(\tilde{X})]  \/. \]
Note that $I$ is also an ideal in $A'$. Observe that 
$A'/I$ is a free $\Z[T_1, \ldots, T_n]$-module of rank ${n \choose k}$.
(There are several proofs. For instance, (temporarily) identify $\Z[T_1, \ldots, T_n]$ 
to $H^*_T(pt)$. Then from the Whitney relations 
$c^T(\cS) \cdot c^T(\cQ)= c^T(\C^n)$ in $H^*_T(\Gr(k;n))$,
there is an isomorphism of $H^*_T(pt)$-algebras $A'/I\simeq H^*_T(\Gr(k;n))$, sending
$e_i(X) \mapsto c_i^T(\cS)$ and $e_j(\tilde{X}) \mapsto c_j^T(\cQ)$.) 
If we regard the Laurent polynomial ring 
$\mathrm{K}_T(pt)= \Z[T_1^{\pm1}, \ldots, T_n^{\pm1}]$ as a 
$\Z[T_1, \ldots, T_n]$-algebra 
(under the natural inclusion of polynomials into Laurent polynomials), 
we obtain that
\[ A = (A'/I) \otimes_{\Z[T_1, \ldots, T_n]} \mathrm{K}_T(pt) \]
is a free $\mathrm{K}_T(pt)$-module of rank ${n \choose k}$.
We utilize this to show that $\Psi$
induces an isomorphism between the associated graded rings.
To calculate $\mathrm{gr}(A)$, we make the change of variables
$z_i = 1- X_i$ ($1 \le i \le k$), $\tilde{z}_j = 1 - \tilde{X}_j$ ($1 \le j \le n- k$),
and $\zeta_s = 1- T_s$ ($1 \le s \le n$). 
Each of these variables has degree $1$.
Under this change, $A$ becomes
\[ \frac{\K_T(pt)[e_1(z), \ldots, e_k(z);e_1(\tilde{z}), \ldots, e_{n-k}(\tilde{z})]}{
\langle \sum_{i+j =\ell} e_i(z_1, \ldots, z_k) e_j(\tilde{z}_1, \ldots, \tilde{z}_{n-k}) 
- e_\ell(\zeta_1, \ldots, \zeta_n) \rangle_{1 \le \ell \le n}} \/;\]
see also \eqref{eq:ring-math} below (with $q=0$). The variables 
$z_i = 1 - X_i$ are sent to the $\K$-theoretic Chern roots of $\cS^*$, and
similarly, $\tilde{z}_j$ to the $\K$-theoretic Chern roots of $\cQ^*$. 
We deduce that the initial term of $\Psi(e_i(z))$, and of 
$\Psi(e_j(\tilde{z}))$, equals to 
$c_i^T(\cS^*)$, respectively $c_j^T(\cQ^*)$ in $H^{*}_T(\Gr(k;n))$. Thus, 
when taking the associated graded rings, $\Psi$ recovers the usual presentation
of the equivariant cohomology ring. {This implies that $\Psi$ is 
injective. 

The surjectivity follows from the theory of factorial Grothendieck polynomials.
More precisely, {from \cite[Thm.~2.1]{buch:quiver} or \cite[Thm.~1.2]{oetjen}, see also \cite{fulton.lascoux,mcnamara:factorial,ikeda.naruse:K}},
for each partition $\lambda$, 
the equivariant Schubert class $\cO_\lambda$ is a symmetric polynomial in 
the $\K$-theoretic Chern roots $1-X_1, \ldots, 1-X_k$ of $\cS$ with coefficients in 
$\K_T(pt)$. By \eqref{E:KChern} below,
this is a $\K_T(pt)$-linear combination of the (images of) $e_i(X)$.}
Then $\Psi$ is also surjective, and this finishes the proof.\end{proof}

Recall from \Cref{thm:QKTWhitney} that in $\QK_T(\Gr(k;n))$,
\begin{equation}\label{E:rel} \lambda_y(\cS) \star \lambda_y(\cQ) = \lambda_y(\C^n) - \frac{q}{1-q} y^{n-k} (\lambda_y(\cS) -1) \star \det \cQ \/. \end{equation}
Motivated by this, define the ideal 
\[ I_q \subset \mathrm{K}_T[pt][[q]][e_1(X), \ldots, e_k(X), e_1(\tilde{X}), \ldots, e_{n-k}(\tilde{X})]\] 
generated by polynomials obtained by equating the powers of $y$ in the equality:
\begin{equation}\label{E:bothQKrels-Naksec} 
\begin{split}
\prod_{i=1}^k & (1+y X_i) \times  \prod_{j=1}^{n-k} (1+y \tilde{X}_i) \\ = & \prod_{i=1}^n (1+y T_i) 
-\frac{q}{1-q} y^{n-k} \tilde{X}_1 \cdot \ldots \cdot \tilde{X}_{n-k} \bigl( \prod_{i=1}^k (1+y X_i) -1 \bigr) \/. \end{split} \end{equation}

\begin{thm}\label{thm:qkpres} There is an isomorphism of $\mathrm{K}_T[pt][[q]]$-algebras
\[ \Psi: \mathrm{K}_T[pt][[q]][e_1(X), \ldots, e_k(X), e_1(\tilde{X}), \ldots, e_{n-k}(\tilde{X})]/I_q \to \QK_T(\Gr(k;n)) \/, \]
sending $e_i(X) \mapsto \wedge^i \cS$ and $e_j(\tilde{X}) \mapsto \wedge^j \cQ$.  
\end{thm}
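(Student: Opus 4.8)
The plan is to follow the Siebert--Tian strategy, as adapted to the filtered setting developed in Appendix~\ref{sec:filtered}. Write $A_q$ for the source ring $\mathrm{K}_T(pt)[[q]][e_1(X),\ldots,e_k(X),e_1(\tilde X),\ldots,e_{n-k}(\tilde X)]/I_q$. The map $\Psi$ is well-defined on $A_q$ precisely because the relations defining $I_q$ are exactly the coefficients of powers of $y$ in the identity \eqref{E:rel} of \Cref{thm:QKTWhitney}; so the only content is that $\Psi$ is bijective. First I would equip $A_q$ with a filtration making $\Psi$ a filtered homomorphism: perform the change of variables $z_i=1-X_i$, $\tilde z_j=1-\tilde X_j$, $\zeta_s=1-T_s$ of \eqref{E:cov}, assign each of these degree $1$ and $\deg q=n$, so that $\QK_T(pt)$ and $A_q$ become filtered $\QK_T(pt)$-algebras exactly as $\QK_T(\Gr(k;n))$ is. Since $y^{n-k}$ multiplies the $q$-term in \eqref{E:bothQKrels-Naksec}, the $q$-correction contributes only in degrees $\geq n$; hence reducing mod the filtration piece of degree $\geq 1$ past the leading term, i.e. passing to associated graded rings, kills the quantum correction, and $\mathrm{gr}(A_q)$ is exactly the classical cohomology presentation ring $A'/I$ of \Cref{prop:KTpres} (more precisely its graded version, Witten's presentation \eqref{E:introQHpres} in the equivariant form). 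This is where I invoke the already-established fact that $\mathrm{gr}(\QK_T(\Gr(k;n)))\cong\QH^*_T(\Gr(k;n))$ and that the associated graded of $\Psi$ is the classical isomorphism from \Cref{prop:KTpres}.

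With that in hand, the argument splits into the two directions. For \emph{surjectivity}: $\mathrm{gr}(\Psi)$ is surjective (it is the classical presentation isomorphism on associated graded rings), and $\QK_T(\Gr(k;n))$ is complete with respect to its filtration (it is $\langle q\rangle$-adically complete by construction, and the dimension filtration is finite in each $q$-degree since $H^*_T(\Gr(k;n))$ is finitely generated in each cohomological degree over $H^*_T(pt)$); so a standard filtered-ring lifting argument — the completeness hypothesis flagged in the acknowledgments and in \Cref{sec:filtered} — upgrades surjectivity of $\mathrm{gr}(\Psi)$ to surjectivity of $\Psi$. For \emph{injectivity}: $A_q$ is a module over $\QK_T(pt)=\mathrm{K}_T(pt)[[q]]$; I would show it is generated by the $\binom{n}{k}$ monomials in $e_i(z),e_j(\tilde z)$ that map to a basis of $H^*_T(\Gr(k;n))$ over $H^*_T(pt)$ — this holds because $\mathrm{gr}(A_q)=A'/I$ is free of rank $\binom{n}{k}$ over $\mathrm{gr}(\QK_T(pt))$ by \Cref{prop:KTpres}, and the filtered Nakayama lemma of \Cref{sec:filtered} lifts a generating set of the associated graded module to a generating set of the module itself (again using completeness). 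Thus $A_q$ is generated by $\binom{n}{k}$ elements over $\QK_T(pt)$; since $\Psi$ is surjective onto $\QK_T(\Gr(k;n))$, which is free of rank $\binom{n}{k}$ over $\QK_T(pt)$, the surjection of a module generated by $\binom{n}{k}$ elements onto a free module of rank $\binom{n}{k}$ over a domain must be an isomorphism (a rank count after tensoring with the fraction field $\Frac(\QK_T(pt))$, combined with the fact that a surjection between free modules of equal finite rank over a commutative ring is an isomorphism). Hence $\Psi$ is an isomorphism.

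The main obstacle I anticipate is \emph{not} the ring theory but making the filtered formalism rigorous enough that the Nakayama and completeness arguments genuinely apply: the dimension filtration on $\K_T(\Gr(k;n))$ coming from the Chern character is not the naive $\langle q\rangle$-adic one, and one must check that $A_q$ with the combined $(z,\tilde z,\zeta,q)$-filtration is separated ($\bigcap_i \mathcal{QK}_i=0$) and complete, that $\Psi$ is strictly filtered (sends the degree-$i$ piece onto the degree-$i$ piece, not merely into it — which is what lets $\mathrm{gr}(\Psi)$ be an isomorphism and not just injective with cokernel killed on gr), and that the relations in $I_q$, after the change of variables, have the expected leading forms. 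Concretely one must verify that the $\ell$-th relation $\sum_{i+j=\ell}e_i(z)e_j(\tilde z)-\tilde g_\ell(z,\zeta,q)=0$ has leading term precisely the $\ell$-th classical relation $\sum_{i+j=\ell}e_i(z)e_j(\tilde z)-e_\ell(\zeta)$ for $\ell\le n$, with the $q$-contribution strictly lower (i.e. of higher filtration degree) — this is the precise sense in which ``the $q=0$ specialization gives the classical ideal'' and it is exactly what \eqref{E:bothQKrels-Naksec} encodes, but it deserves to be spelled out degree by degree. Once these bookkeeping points are secured, everything else is the formal Nakayama/rank-count machinery assembled in the appendix.
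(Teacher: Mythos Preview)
Your approach is correct and follows the same Siebert--Tian/Nakayama strategy the paper uses, but you take a longer detour than necessary. The paper works with the plain $\langle q\rangle$-adic filtration rather than the full dimension filtration: setting $q=0$ in $I_q$ recovers exactly the ideal $I$ of \Cref{prop:KTpres}, so $\Psi\bmod q$ is the classical $\K$-theory isomorphism $A_q/qA_q\cong \K_T(\Gr(k;n))$ already established there, and then \Cref{prop:Nakiso} (with $R=\K_T(pt)[[q]]$, $I=\langle q\rangle$, $N=\QK_T(\Gr(k;n))$ free of rank $\binom{n}{k}$) finishes in one stroke. Your route instead passes all the way to the associated graded for the combined $(z,\tilde z,\zeta,q)$-filtration and lands in $\QH^*_T(\Gr(k;n))$; this works, but it forces you to confront the strictness, separatedness, and leading-term bookkeeping you flag in your last paragraph, none of which is needed if you stop at the $q$-adic reduction. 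The injectivity argument you give---a surjection from a module generated by $\binom{n}{k}$ elements onto a free module of the same rank over a domain is an isomorphism---is exactly the content of \Cref{lemma:basis} and the proof of \Cref{prop:Nakiso}, so on that front you and the paper agree verbatim.
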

\begin{proof} There exists a ring homomorphism
\[ \widetilde{\Psi}:\mathrm{K}_T[pt][[q]][e_1(X), \ldots, e_k(X), e_1(\tilde{X}), \ldots, e_{n-k}(\tilde{X})]\to \QK_T(\Gr(k;n)) \/, \]
sending $e_i(X) \mapsto \wedge^i \cS$ and $e_j(\tilde{X}) \mapsto \wedge^j \cQ$. It follows from \Cref{E:rel}
that $\widetilde{\Psi}(I_q) = 0$, therefore this induces the homomorphism $\Psi$ from the claim.
{In order to prove that $\Psi$ is an isomorphism, we will use the Nakayama-type result from
\Cref{prop:Nakiso}, applied to the case when 
\[ M = \mathrm{K}_T[pt][[q]][e_1(X), \ldots, e_k(X), e_1(\tilde{X}), \ldots, e_{n-k}(\tilde{X})]/I_q \] 
is the claimed presentation, regarded as a module over the $\langle q \rangle$-adically complete ring 
$R=\mathrm{K}_T[pt][[q]]$, and the free $R$-module
$N=\QK_T(\Gr(k;n))$.
\Cref{prop:KTpres} implies that if one takes the quotient by $\langle q \rangle$, $\Psi$ 
becomes an isomorphism. 
The fact that $M$ is a finite $R$-module
follows from \cite[Ex.~7.4,~p.~203]{eisenbud:CAbook} (cf.~\Cref{rmk:Eis-fg}), applied to 
the ideal $\mathfrak{m}=\langle q \rangle$, and then noting that 
$M$ is finite over 
\[ S=\mathrm{K}_T[pt][[q]][e_1(X), \ldots, e_k(X), e_1(\tilde{X}), \ldots, e_{n-k}(\tilde{X})] \/. \]
Note that $R$ and $S$ are Noetherian rings, e.g.~by \cite[Thm.~10.26]{AM:intro},
and that $\langle q \rangle$ is included in the Jacobson radical of $R$ because $1-a q$ is invertible in $R$, for any $a \in \K_T(pt)$.\begin{footnote}{See also \cite[\S 3  and Appendix A]{GMSXZZ:whitney-math} for more details about this argument, and how it applies to more general partial flag varieties.}\end{footnote}}
\end{proof}

\section{Physics, Wilson lines, \& Jacobian ring presentations}
\label{sect:physbase}
In this section we will outline the approach giving the 
Jacobian relations \eqref{E:eqJac} derived from 
a holomorphic function $W$ called the (twisted) superpotential
in the physics literature.
These relations will be utilized in the next
section to obtain the Coulomb branch presentation of 
$\QK_T(\Gr(k;n))$. See 
e.g.~\cite{Jockers:2018sfl,Jockers:2019wjh,Jockers:2019lwe,Gu:2020zpg,Ueda:2019qhg}
for references. 

Briefly, in the special case that a space can be realized as
$V // G$ for $V$ a complex 
vector space $V$ and $G$ a reductive algebraic group,
the quantum K theory of $V//G$ arises from a three-dimensional ``supersymmetric
gauge
theory,'' 
which is ultimately defined by $G$, a representation $\rho$ defining the
$G$ action on $V$, and a matrix of real numbers $\tilde{k}$ (of the same rank as $G$),
whose values we will give momentarily.
The three-dimensional theory lives on a three-manifold, which is taken
to be $\Sigma \times S^1$ for a Riemann surface $\Sigma$, partly as a result
of which the theory can be described as a two-dimensional theory on
$\Sigma$.  Correlation functions in the two-dimensional theory
include ``Wilson lines'' (see e.g. \cite[section 2]{Closset:2016arn})
\begin{equation}
{\rm Tr}_{\rho} P \exp\left( \int_{S^1} A \right)
\end{equation}
(for $\rho$ a representation of $G$, $A$ a connection on a principal $G$ bundle,
$P$ a path-ordering symbol)
on $S^1$ over a fixed point in $\Sigma$, and those
Wilson lines correspond to K theory elements on $V//G$. More precisely, we will identify them
with Schur functors on certain vector bundles on $V//G$ associated to representations of $G$; see \Cref{rmk:schur}
below. 
Quantum K theory of spaces described as critical loci of holomorphic
functions on noncompact symplectic quotients can also be described
physically, but in this section we focus on the simpler case of
spaces that are themselves symplectic quotients of vector spaces.
Given $V//G$, the quantum K theory relations arise as derivatives of
a holomorphic function known as the superpotential and conventionally denoted $W$;
cf.~\Cref{E:Wspace} below.

\begin{remark} At least in some cases physics makes predictions for
the
quantum K theory ring of a space realized as the critical locus of
a holomorphic function on $V//G$ (such as a hypersurface, or a complete intersection), 
but the details are beyond the scope of this paper. See e.g., \cite{Gu:2020zpg} and references therein.
\end{remark}
For simplicity, we specialize to the case that
$G = \mathrm{GL}_k$
for a positive integer $k$.
Then, define $\tilde{k}_{1}$, $\tilde{k}_{2} \in {\mathbb R}$ as follows.
Write $\rho$ as a sum of irreducible representations
\begin{equation}
\rho \: = \: \rho_1 \oplus \cdots \oplus \rho_{\ell}.
\end{equation}
Define 
\begin{equation} \label{eq:CS-levels}
\tilde{k}_{1} \: = \: -\frac{1}{2} \sum_{\gamma=1}^{\ell}
\left( {\rm Cas}_1(\rho_{\gamma}) \right)^2,
\: \: \:
\tilde{k}_{2} \: = \: k \: - \: \frac{1}{2} 
\sum_{\gamma=1}^{\ell} \frac{\dim \rho_{\gamma}}{\dim GL(k)}
 {\rm Cas}_2(\rho_{\gamma}),
\end{equation}
where Cas denotes eigenvalues of Casimir operators as in e.g., 
\cite[chapter 7]{iachello06}.

\begin{remark} The choice of $\tilde{k}_1$ and $\tilde{k}_2$ from
\eqref{eq:CS-levels} generates the standard quantum K theory. However, physics considerations suggest there are other choices,
conjecturally related to the quantum K theory with level structure considered in \cite{Ruan:2018}.
See \cite{Gu:2021yek} for further physics discussions.
\end{remark}

Then, the quantum K theory ring relations are given physically as 
the critical locus equations of a
function $W$
known as the superpotential,
which is given as
\cite[equ'n (2.33)]{Closset:2016arn},
\cite[equ'n (2.1)]{Gu:2020zpg}
\begin{eqnarray}
W & = &
\frac{\tilde{k}_{2}}{2} \sum_{a=1}^{k} \left(\ln X_{a} \right)^2   \: + \:
\frac{\tilde{k}_{1} - \tilde{k}_{2}}{2 k} 
\left( \sum_{a=1}^{k} \ln X_{a} \right)^2 
\nonumber \\
& & 
\: + \:
 \left( \ln(-1)^{k-1} q \right) \sum_{a=1}^{k} \ln X_{a} 
\nonumber \\
& & \: + \:
\sum_{\alpha} \left[ {\rm Li}_2\left( \exp (\tilde{\rho}_{\alpha}, \ln X)  \right)
\: + \: \frac{1}{4} \left(  \tilde{\rho}_{\alpha}, \ln X \right)^2 \right],
\end{eqnarray}
where we use $\ln X$ to denote a vector with components
$( \ln X_a )$, where $(X_{a})$ is
a point in $({\mathfrak R} \otimes_{\mathbb Z} {\mathbb C} - \Delta)/{\mathcal W} 
\cong ({\mathbb C}^{k} - \Delta)/{\mathcal W}$
for ${\mathfrak R}$ the root lattice of $\mathrm{GL}_k$, 
\begin{equation}\label{E:Wspace}
\Delta \: = \: \coprod_{a < b} \{ X_a = X_b \} \, 
\coprod_a \{X_a = 1 \} \, \coprod_a \{X_a = 0 \},
\end{equation}
${\mathcal W} = S_k$ the Weyl group of $U(k)$,
$(,)$ denotes a natural pairing
between root and weight lattice vectors,
and $\{ \tilde{\rho}_{\alpha} \}$ are the weight vectors
of the representation $\rho$.
It can be shown that the superpotential is invariant under the action
of the Weyl group.  

For the case of a Grassmannian $\Gr(k;n)$, 
described as the GIT quotient $V // \mathrm{GL}_k$ for $V = \mathrm{Hom}(\C^k, \C^n)$, with
$\rho$ given by a sum of $n$ copies of the fundamental representation,
\begin{equation}
\tilde{k}_1 \: = \: -n/2, \: \: \: \tilde{k}_2 \: = \: k - n/2,
\end{equation}
and the superpotential specializes to
\begin{eqnarray}
W & = &
\frac{k}{2} \sum_{a=1}^k \left( \ln X_a \right)^2 \: - \: \frac{1}{2} \left(
\sum_{a=1}^k \ln X_a \right)^2
\nonumber \\
& & \: + \: \ln\left( (-1)^{k-1} q \right) \sum_{a=1}^k \ln X_a
\: + \:
n \sum_{a=1}^k {\rm Li}_2 \left( X_a \right).
\label{eq:sup-gr}
\end{eqnarray}
\begin{remark}\label{rmk:schur}
A Wilson line in representation $\phi$ of $U(k)$ 
is the Chern character of the
Schur functor $\mathfrak{S}_{\phi} {\mathcal S}$ for ${\mathcal S}$ the universal 
subbundle, where the $X_a$ are exponentials of Chern roots.  
We give a few simple examples below:
\begin{center}
\begin{tabular}{ccc}
Representation & Schur functor & Wilson line \\ \hline
$\tiny\yng(1)$ & ${\mathcal S}$ & $e_1(X)$ \\
$\tiny\yng(2)$ & ${\rm Sym}^2 {\mathcal S}$ & $h_2(X)$ \\
$\tiny\yng(1,1)$ & $\wedge^2 {\mathcal S}$ & $e_2(X)$ 
\end{tabular}
\end{center}
\end{remark}

\begin{remark} There exists an analogous superpotential $W$ whose
derivatives encode quantum K theory relations for more general $V//G$.
Write $G$
as the complexification of a compact Lie group $G'$, and decompose the
Lie algebra of $G'$, ${\mathfrak g}$, as a sum of central pieces and
simple factors.  Very schematically, there is one $q_i$ for each
central copy of Lie $U(1)$ in ${\mathfrak g}$, and the superpotential
is a Weyl-invariant function determined by the weight vectors of the
representation $\rho$ and the ``Chern-Simons levels,'' here $\tilde{k}_{1}$,
$\tilde{k}_{2}$, which are determined by slight
generalizations of (\ref{eq:CS-levels}).
\end{remark}

We claim that the quantum K theory ring is determined
by an analogue of the Jacobian ring of $W$ (involving exponentials of
derivatives rather than just derivatives).  
For the moment, we compute the relations
generated by $W$, and then later we will observe
that the resulting ring is the quantum K theory ring of 
$\Gr(k;n)$ as presented by~\cite{Gorbounov:2014}.

Returning to the Grassmannian $\Gr(k;n)$,
using the possibly obscure fact that
\begin{equation}
y \frac{\partial}{\partial y} {\rm Li}_2(y) \: = \: - \ln(1 - y),
\end{equation}
we find for each $1 \le a \le k$ that
\begin{equation} \label{eq:ideal}
\exp\left( \frac{\partial W}{\partial \ln X_a} \right) \: = \: 1
\end{equation}
implies that
\begin{equation} \label{eq:ordqkpredict}
(-1)^{k-1} q \left( X_a \right)^k \: = \:
\left( \prod_{b=1}^k X_b \right) \left(1 - X_a \right)^n.
\end{equation}
There is also an equivariant version of these identities.
Let $T_i \in \K_T(pt)$ denote equivariant parameters.  These appear in the pertinent
physical theories as exponentials of
``twisted masses.''  Concretely,
in cases with twisted masses, the superpotential~\eqref{eq:sup-gr}
for $\Gr(k;n)$ generalizes to \cite{Ueda:2019qhg}
\begin{eqnarray*}
W & = &
\frac{k}{2} \sum_{a=1}^k \left( \ln X_a \right)^2 \: - \: \frac{1}{2} \left(
\sum_{a=1}^k \ln X_a \right)^2
\nonumber \\
& & \: + \: \ln\left( (-1)^{k-1} q \right) \sum_{a=1}^k \ln X_a
\: + \:
\sum_{i=1}^n \sum_{a=1}^k {\rm Li}_2 \left( X_a T_i^{-1} \right).
\end{eqnarray*}
Simplifying
\begin{equation}\label{E:eqJac}
\exp\left( \frac{\partial W}{\partial \ln X_a} \right) \: = \: 1
\end{equation}
for each $1 \le a \le k$, we find
 \begin{equation}\label{eq:betheeq}
 (-1)^{k-1} q (X_a)^k \prod_{j=1}^{n} T_j = \left(\prod_{b=1}^k X_b\right) \cdot  \prod_{i=1}^n (T_i - X_a) \/.
 \end{equation}
In the next section we will symmetrize these relations to obtain the Coulomb
branch presentation of $\QK_T(\Gr(k;n))$. This will be proved to be isomorphic
to the $\QK$ Whitney ring from \Cref{thm:qkpres}. 
We also note that the equations~(\ref{eq:betheeq}) are the same
as the Bethe Ansatz equations from 
\cite[equ'n (4.17)]{Gorbounov:2014}. In {\em loc.cit.},
the authors utilize an approach 
based on algebraic properties of integrable
systems and of equivariant localization, 
to obtain a distinct presentation of $\QK_T(\Gr(k;n))$.
See also \S \ref{sec:GKpres} below for a comparison. 

Observe that in the non-equivariant specialization, i.e.~when
$T_i = 1$, the equations \eqref{eq:betheeq} specialize to \eqref{eq:ordqkpredict}.

\begin{remark} If one considers the specialization $q \mapsto 1$ in the 
finite difference operator
$\mathcal{H}$ annihilating the $I$-function considered by Givental and Yan in
\cite[p. 21]{Givental:2020} (see also \cite{Wen:2019}), one recovers again the vacuum (or the Bethe Ansatz)
equations from \eqref{eq:betheeq}.\begin{footnote}{More precisely, in {\em loc.~cit.} one needs to take the 
$T$-equivariant version of all objects involved, and their $q$ is the `loop parameter,' which is different from ours.}\end{footnote}
The $I$-function from \cite{Givental:2020} corresponds to the `abelianization' 
of the Grassmannian $\Gr(k;n)$; cf.~{\em loc.cit.} From this perspective, our procedure below may be 
interpreted as a symmetrization of 
the specialization $\mathcal{H}_{q \mapsto 1}$. This further
suggests that the abelian-nonabelian correspondence
proved for quantum cohomology in \cite{ciocan.kim.sabbah:abelian} may extend to 
quantum K theory; see \cite{gonzalez.woodward:qkirwan}. See 
also \cite{Iritani:2013qka} for a related method to obtain relations in quantum K theory 
by identifying difference operators which annihilate the appropriate $J$-function.
\end{remark}
\begin{example} In the case of $\Gr(2;5)$,
the superpotential is given by
\begin{eqnarray}
W & = & \frac{1}{2} \left( \ln X_1 \right)^2 \: + \:
\frac{1}{2} \left( \ln X_2 \right)^2 \: - \: \left( \ln X_1 \right)
\left( \ln X_2 \right)
\nonumber \\
& & \: + \: \ln\left( - q \right) \sum_{a=1}^2 \ln X_a
\: + \:
\sum_{i=1}^5 \sum_{a=1}^2 {\rm Li}_2 \left( X_a T_i^{-1} \right).
\end{eqnarray}
and the chiral ring relations~(\ref{eq:betheeq}) are, for $a \in \{1, 2 \}$,
\begin{equation}
\prod_{i=1}^5 \left( T_i - X_a \right) \: = \:
(-q) \frac{ X_a^2 }{ X_1 X_2} \prod_{j=1}^5 T_j.
\end{equation}
In the nonequivariant case, we take $T_i = 1$, then the chiral ring relations
become
\begin{equation}
-q X_1 \: = \: X_2 (1-X_1)^5, \: \: \:
-q X_2 \: = \: X_1 (1-X_2)^5,
\end{equation}
in agreement with~(\ref{eq:ordqkpredict}). We show in the next section
how the symmetrization of this leads the quantum $\K$ relations; see also Appendix \ref{app:g25} below.
\end{example}


\section{Coulomb branch and quantum Whitney presentations}
\label{sect:shifted}
The goal of this section is to obtain the Coulomb branch 
presentation we denoted by $\widehat{\QK}_T(\Gr(k;n))$, and predicted by physics. 
We will prove in \Cref{thm:main-result}
that this is equivalent to the
presentation of $\QK_T(\Gr(k;n))$ from \Cref{thm:qkpres}.

The idea of obtaining the Coulomb branch presentation was already used in the authors' previous
work \cite{Gu:2020zpg}. First, one rewrites the `vacuum equations' \eqref{eq:betheeq}
in terms of the `shifted variables' from \eqref{E:shiftedvars} below. Since
$\Gr(k;n) = Hom(\C^k, \C^n)//\mathrm{GL}_k$, any presentation has to satisfy a
symmetry with respect to $S_k$, the Weyl group of $\mathrm{GL}_k$.
While the ideal generated by equations \eqref{eq:betheeq}
is symmetric under permutations in $S_k$, the individual generators
are not. To rectify this, we write down a `characteristic polynomial' (cf.~\Cref{eq:char} below)
where all the coefficients
satisfy the required symmetry. Then we utilize the Vieta equations for this polynomial to obtain
a set of polynomial equations. These are $S_k \times S_{n-k}$ symmetric, and they give a presentation
of the equivariant quantum K ring. To start, define
\begin{equation}\label{E:shiftedvars}
\zeta_i \: = \: 1 - T_i, \: \: \: z_a \: = \: 1-X_a \quad (1 \le i \le n\/; \quad 1 \le a \le k) \/,
\end{equation}
so that for any $a$, equation~\eqref{eq:betheeq} becomes
\begin{equation}\label{E:BA-cov} 
\left( \prod_{i=1}^n (z_a - \zeta_i ) \right)
\left( \prod_{b \neq a} ( 1- z_b) \right)
\: + \:
(-1)^k q (1-z_a)^{k-1} \prod_{i=1}^n (1-\zeta_i) \: = \: 0 \/.
\end{equation}
A key observation is that we may rewrite this in the form
\begin{equation}  \label{eq:betheeq2}
(z_a)^n \: + \: \sum_{i=0}^{n-1} (-1)^{n-i} (z_a)^i \hat{g}_{n-i}(z,\zeta,q)
\: = \: 0 \/.
\end{equation}
where the $\hat{g}_i(z,\zeta,q)$ are {\em symmetric polynomials} in $z_1, \ldots, z_k$ and
$\zeta_1, \ldots, \zeta_n$. To do this, it suffices to take $a=1$. 
The only non-trivial part is the symmetry in
the variables $z$. This follows from
repeated application of the identity
\[ e_j(z_2, \ldots, z_k) = e_j(z_1, \ldots, z_k) - z_1 e_{j-1}(z_2, \ldots , z_k) \]
to the factor $\prod_{j=2}(1-z_j)= \sum_{i=0}^{k-1} (-1)^i e_i(z_2, \ldots, z_k)$,
then collecting the resulting powers of $z_1$.

To state the formula for the polynomial
$\hat{g}_\ell(z, \zeta,q)$, we fix some notation. 
Set 
\[ c^z= \prod_{i=1}^k (1-z_i) = \sum_{i \ge 0} (-1)^i e_i(z) \/; ~ c_{\le j}^z = \sum_{i = 0}^j (-1)^i e_i(z) \/;~ c_{\ge j}^z = (-1)^j( c(z)- c_{\le j-1}(z)) \/.\] 
Informally these are truncations of the Chern polynomial in $z=(z_1, \ldots, z_k)$.
One defines similarly $c^\zeta, c_{\le j}^\zeta,c_{\ge j}^\zeta$; these are polynomials
in $\zeta=(\zeta_1,\ldots, \zeta_n)$. Set
\begin{equation}\label{E:cplambda} c'_{\ge \ell}(z,\zeta)= e_\ell(\zeta) + e_{\ell-1}(\zeta) c_{\ge 2}^z+ e_{\ell-2}(\zeta) c_{\ge 3}^z+ \ldots + e_{\ell - k+1}(\zeta) c_{\ge k}^z \/. \end{equation}
If clear from the context, we will drop the variables $z,\zeta$ from the notation. 
(As usual, $e_i(z) =e_i(\zeta) = 0$ for $i < 0$ and 
$e_0(z) = e_0(\zeta) =1$.) Define the matrices 
\[ E= \begin{pmatrix} -1 & 0 & \ldots & 0 \\ -e_1 & -1 & \ldots & 0 \\
\vdots & \vdots & \ddots & 0 \\
-e_{k-1} & -e_{k-2} & \ldots & -1 \end{pmatrix}\/; \]
\[ C^\zeta_{\ge n-k+2}= \begin{pmatrix} c_{\ge n-k+2}^\zeta \\  \vdots \\ c_{\ge n}^\zeta \\ 0 \end{pmatrix}\/; \quad 
C^{z,\zeta}_{\ge n-k+1}= \begin{pmatrix}  c'_{\ge n-k+1} \\ c'_{\ge n-k+2} \\  \vdots \\ 
c'_{\ge n} \end{pmatrix} \/.\] 
Besides their usefulness in the lemma below, these matrices will 
appear naturally in \S \ref{sec:pres-iso} below, in relation to an
equivariant generalization of Grothendieck polynomials. 
\begin{lemma}\label{lemma:gell} The polynomial coefficients $\hat{g}_{\ell}(z,\zeta,q)$ from \eqref{eq:betheeq2} are given by:
\begin{equation}\label{E:gell}
\begin{cases} c'_{\ge \ell}(z,\zeta) & \textrm{ if } 
1 \leq \ell \leq n-k \\ c'_{\ge \ell}(z,\zeta) + \bigl(E\cdot C^\zeta_{\ge n-k+2}\bigr)_\ell 
+ (-1)^{n+k} q \binom{k-1}{n-\ell} c^\zeta 
&
\textrm{ if } 
n-k+1 \le \ell \le n \/. \end{cases}
\end{equation}
Here $M_\ell$ denotes the $\ell-n+k$-th component of the $k$-component column matrix $M$.
\end{lemma}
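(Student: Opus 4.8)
The plan is to take $a=1$ in equation~\eqref{eq:betheeq} after the substitution \eqref{E:shiftedvars}, and to reorganize the resulting polynomial in $z_1$ into the Vieta form \eqref{eq:betheeq2}, reading off $\hat g_\ell$ as the coefficient of $(-1)^{n-i}(z_1)^i$ with $\ell = n-i$. The starting point is the displayed identity just before \eqref{eq:betheeq2}, namely
\[
\Bigl(\prod_{i=1}^n(z_1-\zeta_i)\Bigr)\Bigl(\prod_{b=2}^k(1-z_b)\Bigr)
\;+\;(-1)^k q\,(1-z_1)^{k-1}\prod_{i=1}^n(1-\zeta_i)\;=\;0 .
\]
I would treat the two summands separately. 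For the $q$-free part, expand $\prod_{i=1}^n(z_1-\zeta_i)=\sum_{i=0}^n (-1)^{n-i}(z_1)^i e_{n-i}(\zeta)$ and $\prod_{b=2}^k(1-z_b)=\sum_{j=0}^{k-1}(-1)^j e_j(z_2,\dots,z_k)=\sum_{j\ge 0}(-1)^j c^z_j$ up to the truncation; multiplying and collecting the coefficient of $(z_1)^i$ gives a sum $\sum_j (-1)^{n-i}(-1)^j e_{n-i+ \text{(shift)}}(\zeta)\,e_j(z_2,\dots,z_k)$. The key point flagged in the text is that this coefficient, a priori only a polynomial in $z_1,\dots,z_k$, is in fact symmetric in all of $z_1,\dots,z_k$; this is exactly the "repeated application of $e_j(z_2,\dots,z_k)=e_j(z)-z_1e_{j-1}(z_2,\dots,z_k)$" manoeuvre already sketched in the excerpt, so I would invoke that and then re-express everything in the symmetric variables $e_i(z)$, i.e.\ in terms of the truncated Chern polynomials $c^z_{\ge j}$. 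Matching against the definition \eqref{E:cplambda} of $c'_{\ge\ell}(z,\zeta)$ — which is precisely $\sum_{m} e_{\ell-m}(\zeta)\,c^z_{\ge m+1}$ with $c^z_{\ge 1}$ replaced by $e_0=1$ — should produce $c'_{\ge\ell}(z,\zeta)$ for $1\le\ell\le n$, together with the "correction" column $E\cdot C^\zeta_{\ge n-k+2}$ in the range $n-k+1\le\ell\le n$, which accounts for the terms where the $\zeta$-index would exceed what $c'_{\ge\ell}$ records (equivalently, the boundary terms in the truncation $\prod_{b=2}^k(1-z_b)$ having degree only $k-1$ rather than $k$). This is the computation I expect to be the most delicate bookkeeping: verifying that the off-diagonal entries of $E$ (with $-e_i(z)$ below the diagonal) exactly encode the rewriting $e_j(z_2,\dots,z_k)\leftrightarrow e_j(z)$ and that nothing is double-counted at the index boundaries.

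For the $q$-part, the computation is short: $(1-z_1)^{k-1}=\sum_{p=0}^{k-1}\binom{k-1}{p}(-1)^p(z_1)^p$, and $\prod_{i=1}^n(1-\zeta_i)=c^\zeta$, so $(-1)^k q\,(1-z_1)^{k-1}c^\zeta$ contributes, to the coefficient of $(z_1)^i$, the term $(-1)^k(-1)^i\binom{k-1}{i}q\,c^\zeta$. Writing $i=n-\ell$ and absorbing the sign $(-1)^{n-i}=(-1)^\ell$ that is factored out in \eqref{eq:betheeq2}, the net contribution to $\hat g_\ell$ is $(-1)^{k+(n-\ell)-\ell}\binom{k-1}{n-\ell}q\,c^\zeta=(-1)^{n+k}\binom{k-1}{n-\ell}q\,c^\zeta$ (using $(-1)^{-2\ell}=1$), which is the stated $q$-term. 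Since $\binom{k-1}{n-\ell}=0$ for $n-\ell\ge k$, i.e.\ $\ell\le n-k$, this term automatically vanishes in the classical range, consistent with the first branch of \eqref{E:gell}; I would remark on this explicitly since it is the assertion that "the first $n-k$ relations are classical."

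Finally I would assemble the two pieces. Dividing the whole identity by $(-1)^n$ (so that the leading term becomes $(z_1)^n$) and comparing coefficients of $(z_1)^i$ on both sides yields \eqref{eq:betheeq2} with $\hat g_{n-i}$ equal to the sum of the $q$-free contribution and the $q$-contribution computed above, which is exactly \eqref{E:gell}. The main obstacle, as noted, is the symmetrization/index-boundary argument for the $q$-free part; everything else is a direct expansion. One clean way to organize that obstacle is to prove the auxiliary identity
\[
\Bigl(\prod_{i=1}^n(\xi-\zeta_i)\Bigr)\cdot c^z_{\le k-1}(\text{with }z_1\text{ removed})
\;=\;\xi^n+\sum_{\ell=1}^n(-1)^\ell\xi^{n-\ell}\,\tilde h_\ell(z,\zeta)\quad\text{in }\Z[\zeta][z][\xi],
\]
where $\tilde h_\ell$ is manifestly symmetric in $z$ by an induction on $k$ using the Pascal-type recursion for $e_j$, and then identify $\tilde h_\ell$ with $c'_{\ge\ell}+(E\cdot C^\zeta_{\ge n-k+2})_\ell$ by a second, purely combinatorial comparison of the two explicit formulas. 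I expect this to be a page of careful but routine symmetric-function manipulation.
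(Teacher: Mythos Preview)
Your proposal is correct and is exactly the computation the paper has in mind: the paper's own proof reads in full ``This is a tedious, but rather standard algebraic manipulation, which we leave to the reader,'' and the symmetrization step you describe (via $e_j(z_2,\dots,z_k)=e_j(z)-z_1 e_{j-1}(z_2,\dots,z_k)$) is precisely the one sketched in the text preceding the lemma. Your treatment of the $q$-term is clean and the sign bookkeeping checks out; the only real work, as you anticipate, is the index-tracking in the $q$-free part to see the correction $E\cdot C^\zeta_{\ge n-k+2}$ emerge from the boundary of the truncation, and that is indeed routine symmetric-function manipulation.
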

\begin{proof} This is a tedious, but rather standard algebraic manipulation, which we leave to the reader. \end{proof}

Define a `characteristic polynomial' $f(\xi, z, \zeta,q)$ by
\begin{equation} \label{eq:char}
f(\xi,z,\zeta,q) \: = \:
\xi^n \: + \: \sum_{i=0}^{n-1} (-1)^{n-i} \xi^i \hat{g}_{n-i}(z,\zeta,q) \/.
\end{equation}
From \eqref{eq:betheeq2} we deduce that
$f(\xi,z,\zeta,q) = 0$ whenever $\xi= z_a$ for some $a$.
Since $f$ is a degree $n$ polynomial in $\xi$,
the equation $f(\xi,z,\zeta,q) = 0$
has $n$ roots in some appropriate field extension, which by construction include
$z_1, \ldots , z_k$. Let
$\{z, \hat{z} \} = \{z_1, \cdots, z_k; \hat{z}_{k+1}, \cdots, \hat{z}_n \}$ denote the
$n$ roots of~(\ref{eq:char}). From Vieta's formula,
\begin{equation}\label{E:Vietazhatz}
\sum_{i+j = \ell} e_i(z) e_j(\hat{z}) \: = \: \hat{g}_{\ell}(z,\zeta,q) \/.
\end{equation}
This determines the `Coulomb branch ring':
\begin{equation}  \label{eq:ring-phys}
\widehat{\QK}_T(\Gr(k;n))= \K_T(pt)[[q]][e_1(z), \cdots, e_k(z), e_1(\hat{z}), \cdots, e_{n-k}(\hat{z})] /
\hat{J},
\end{equation}
where $\hat{J}$ is the ideal generated by the polynomials
\begin{equation} \label{eq:l1}
\sum_{i+j=\ell} e_i(z) e_j(\hat{z}) - \hat{g}_{\ell}(z,\zeta,q)\/; \quad 1 \leq \ell \leq n \/. 
\end{equation}
Our goal is to demonstrate that the relations above are equivalent
to those from \Cref{thm:qkpres} arising from the $\QK$-Whitney relations. 
To this aim, apply the same change of variables \eqref{E:shiftedvars} to the relations
\eqref{E:bothQKrels-Naksec}, denoting in addition $\tilde{z}_i = 1 - \tilde{X}_i$. Then
the presentation in~\Cref{thm:qkpres} can be written as:
\begin{equation} \label{eq:ring-math}
\widetilde{\QK}_T(\Gr(k;n))= \K_T(pt)[[q]][e_1(z),\ldots,e_k(z),e_1(\tilde{z}),\ldots,e_{n-k}(\tilde{z}
)]/{\tilde{J}},
\end{equation}
where the ideal $\tilde{J}$ is generated by 
$\sum_{i+j=\ell} e_{i}(z) e_{j}(\tilde{z}) \: - \: \tilde{g}_{\ell}
(z,\zeta,q)$ for
\begin{equation}\label{E:tildeg}
\tilde{g}_{\ell}
(z,\zeta,q) \: = \: e_{\ell}(\zeta)
 \: - \:
\frac{q}{1-q}\sum_{s=n-k+1}^{\ell}(-1)^s \binom{n-s}{\ell-s} \Delta_{s+k-n},
\end{equation}
for $1 \leq \ell \leq n$, and $\Delta_{i} = e_{i}(1-z) e_{n-k}(1-\tilde{z})$. 
Note that $\tilde{g}$ also depends on $\tilde{z}$, although this is not included 
in the notation. (In fact, in the proof of \Cref{thm:main-result} we will eliminate the dependence
on $\tilde{z}$; see \Cref{lemma:twopres}.) To get a more 
explicit formula, observe that
\begin{equation}\label{E:KChern} e_{i}(1-x_1, \ldots, 1-x_n) = \sum_{s=0}^i (-1)^s {n-s \choose i-s} e_s(x_1, \ldots, x_n) \/.\end{equation}
An easy algebra manipulation based on this formula shows that for $1 \le \ell \le n$,
\[ \sum_{s=n-k+1}^{\ell}(-1)^s \binom{n-s}{\ell-s} e_{s+k-n}(1-z) = (-1)^{n-k+1}\Bigl({k \choose \ell +k-n}- e_{\ell+k-n}(z)\Bigr) \/,\]
therefore \eqref{E:tildeg} may be rewritten as
\begin{equation}\label{E:tildeg2} 
\tilde{g}_{\ell}
(z,\zeta,q) \: = \: e_{\ell}(\zeta) +
 (-1)^{n-k}
\frac{q}{1-q} e_{n-k}(1-\tilde{z}) \Bigl({k \choose \ell +k-n}- e_{\ell+k-n}(z) \Bigr) \/.
\end{equation}
\begin{thm}  \label{thm:main-result}
The following three rings are isomorphic to one another and to
the algebra $\QK_T(\Gr(k;n))$:
\begin{enumerate}[label=(\alph*)]
\item \label{pres:hatx} The ring $\widehat{\QK}_T(\Gr(k;n))$ from \eqref{eq:ring-phys};

\item \label{pres:shx} The ring $\widetilde{\QK}_T(\Gr(k;n))$ from \eqref{eq:ring-math};

\item \label{pres:x} The ring
${\mathrm K}_T(pt)[[q]] [e_1(X), \ldots, e_k(X), e_1(\tilde{X}), \ldots, e_{n-k}(\tilde{X})] / I_q$,
where $I_q$ is the ideal defined in \eqref{E:bothQKrels-Naksec}. 
\end{enumerate}
\end{thm}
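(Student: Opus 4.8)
The plan is to establish the chain of isomorphisms $(c) \cong (b) \cong (a) \cong \QK_T(\Gr(k;n))$, where the last identification is essentially already in hand. First, the equivalence of \ref{pres:x} and \ref{pres:shx} is purely a change of variables: applying $z_i = 1-X_i$, $\tilde z_j = 1-\tilde X_j$, $\zeta_s = 1-T_s$ to the generators of $I_q$ from \eqref{E:bothQKrels-Naksec} produces exactly the generators $\sum_{i+j=\ell} e_i(z)e_j(\tilde z) - \tilde g_\ell(z,\zeta,q)$ of $\tilde J$, as recorded in \eqref{E:tildeg}--\eqref{E:tildeg2}; one only needs to verify that the substitution is an automorphism of the relevant polynomial ring (it is triangular and invertible over $\K_T(pt)[[q]]$) and that it carries $I_q$ onto $\tilde J$. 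Combined with \Cref{thm:qkpres}, this gives $\widetilde{\QK}_T(\Gr(k;n)) \cong \QK_T(\Gr(k;n))$ and identifies \ref{pres:x} with \ref{pres:shx}.

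The substantive part is the isomorphism $\widehat{\QK}_T(\Gr(k;n)) \cong \widetilde{\QK}_T(\Gr(k;n))$, i.e. \ref{pres:hatx} $\cong$ \ref{pres:shx}. Both rings are presented over $\K_T(pt)[[q]]$ by $n$ relations in the variables $e_i(z)$ together with an auxiliary set of variables ($\hat z_j$ in one case, $\tilde z_j$ in the other); the first $n-k$ relations are $q$-independent and can be used to eliminate those auxiliary variables. The strategy is: in each presentation, use relations $\ell = 1, \dots, n-k$ to solve recursively for $(-1)^\ell e_\ell(\hat z)$ and $(-1)^\ell e_\ell(\tilde z)$ as polynomials in the $e_i(z)$ and $e_j(\zeta)$ — for the hatted side this produces the equivariant Grothendieck polynomials $G'_\ell(z,\zeta)$ of \Cref{prop:Ehat_groth}, and for the tilded side the equivariant complete homogeneous polynomials $h'_\ell(z,\zeta)$. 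After elimination, each ring becomes a quotient of $\K_T(pt)[[q]][e_1(z),\dots,e_k(z)]$ by the remaining $k$ relations (indexed $\ell = n-k+1, \dots, n$), now expressed via $G'_j$ respectively $h'_j$ and a $q$-correction; this is the content announced around \Cref{lemma:twopres}. The two resulting ideals are then shown to coincide by analyzing the precise algebraic relationship between $G'_j(z,\zeta)$ and $h'_j(z,\zeta)$, which is the work carried out in \S\ref{sec:pres-iso}: the transition matrix between Grothendieck and complete homogeneous polynomials is unitriangular, and matching the $q$-corrected top relations reduces to the identity \Cref{lemma:gell} computing $\hat g_\ell$ together with the explicit form of $\tilde g_\ell$ in \eqref{E:tildeg2}.

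I expect the main obstacle to be precisely this last matching: showing that after eliminating the auxiliary variables, the ideal generated by the top $k$ Vieta relations $\sum_{i+j=\ell}e_i(z)e_j(\hat z) = \hat g_\ell(z,\zeta,q)$ equals the ideal generated by $\sum_{i+j=\ell}e_i(z)e_j(\tilde z) = \tilde g_\ell(z,\zeta,q)$. This requires knowing not just that $G'_j$ and $h'_j$ are related by an invertible change, but that this change is compatible with the $q$-dependent terms — in other words that the Grothendieck-to-complete-homogeneous transition matrix intertwines the two families of top relations including the $\binom{k-1}{n-\ell}q\,c^\zeta$ and $\frac{q}{1-q}e_{n-k}(1-\tilde z)(\binom{k}{\ell+k-n}-e_{\ell+k-n}(z))$ correction terms. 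The combinatorial bookkeeping in \Cref{lemma:gell} and formula \eqref{E:tildeg2} is what makes this work, and carefully reconciling the two q-normalizations (the $\frac{q}{1-q}$ in $\tilde g$ versus the plain $q$ in $\hat g$, which reflects \Cref{thm:qdual}) is the delicate point.

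Finally, to conclude that all three rings are isomorphic to $\QK_T(\Gr(k;n))$ itself, I would simply chain these identifications with \Cref{thm:qkpres}, which already gives \ref{pres:x} $\cong \QK_T(\Gr(k;n))$; the isomorphisms constructed above then transport this to \ref{pres:shx} and \ref{pres:hatx}. One should also check that under the composite isomorphism $e_i(z) \mapsto \sum_{p=0}^i (-1)^p \binom{k-p}{i-p}\wedge^p\cS$, as asserted in \Cref{thm:intro2}; this follows from \eqref{E:KChern} applied to the map $\wedge^i\cS \leftrightarrow e_i(X)$ of \Cref{thm:qkpres} together with $z_i = 1-X_i$.
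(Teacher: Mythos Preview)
Your proposal is correct and follows essentially the same route as the paper: the change of variables for \ref{pres:x}~$\cong$~\ref{pres:shx}, the appeal to \Cref{thm:qkpres} for the identification with $\QK_T(\Gr(k;n))$, and the elimination of auxiliary variables via \Cref{prop:Ehat_groth} and \Cref{prop:Etilde_h} (i.e.\ \Cref{lemma:twopres}) to reduce \ref{pres:hatx}~$\cong$~\ref{pres:shx} to a comparison of two ideals in $\K_T(pt)[[q]][e_1(z),\dots,e_k(z)]$.

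One point where the paper's argument is slightly more delicate than you indicate: rather than showing directly that the two eliminated ideals $\widehat{I}$ and $\widetilde{I}$ coincide, the paper only proves the \emph{containment} $\widehat{I}\subset\widetilde{I}$ (\Cref{cor:inclusion}), via the matrix identity $EA=A'E$ and the key \Cref{lemma:key} (which is exactly where the $q/(1-q)$ versus $q$ reconciliation you flag is carried out, using $e_{n-k}(1-\tilde z)e_k(1-z)\equiv (1-q)\prod_i(1-\zeta_i)$ in $\widetilde{\QK}_T$). The resulting surjection is then upgraded to an isomorphism in \Cref{thm:isomorphism} by checking that the initial terms of the generators of $\widehat{I}$ and $\widetilde{I}$ agree, so the induced map on associated graded rings is an isomorphism. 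Your sketch implicitly assumes both containments can be checked directly, which is plausible but not what the paper does; the filtration argument is a clean way to avoid the reverse computation.
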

Note that the ring in (c) was proved in \Cref{thm:qkpres} to be 
isomorphic to the `geometric' ring $\QK_T(\Gr(k;n))$. 
We already proved that the isomorphism of the rings in 
(b) and (c) follows from the change of 
variables
\begin{equation*}
z_i \: = \: 1 - X_i \/ \quad (1 \le i \le k) \/; \quad  
\tilde{z}_j \: = \: 1 - \tilde{X}_j\/ \quad (1 \le j \le n-k) \/.
\end{equation*}
The isomorphism between (a) and (b) is proved in the next section. In the process
we will reformulate these presentations in terms of (equivariant) 
Grothendieck polynomials and (equivariant) complete 
homogeneous symmetric functions; see
\Cref{lemma:twopres} below. An 
example for $\QK_T(\Gr(2;5))$ is given in Appendix \ref{app:g25}.

\section{An isomorphism between the Whitney and Coulomb branch presentations}\label{sec:pres-iso} 
The goal of this section is to prove 
\Cref{thm:isomorphism}, thus finishing 
the proof of \Cref{thm:main-result}. We utilize the notation from \S \ref{sect:shifted}. 

\subsection{Grothendieck polynomials} We start by recording some algebraic identities
about the Grothendieck polynomials $G_j(z)=G_j(z_1, \ldots, z_k)$, indexed by single row partitions.
As usual, $z=(z_1, \ldots, z_k)$, and $e_i=e_i(z), h_i=h_i(z)$ denote the elementary symmetric function,
respectively the complete homogeneous symmetric function.
It was proved in \cite[p.80]{lenart:combinatorial} that
\begin{equation}\label{E:Gtoh}
G_j(z) = \sum_{a,b \ge 0\/; a+b \le k} (-1)^b h_{j+a} e_b = h_j+(h_{j+1} - h_j e_1) + (h_{j+2} - h_{j+1} e_1+ h_j e_2) +  \ldots  \/. 
\end{equation}
An equivalent formulation proved in
\cite[Thm. 2.2]{lenart:combinatorial}) is 
\begin{equation}\label{E:Gj} 
G_j(z) = h_j(z) - \sum_{a=2}^{k} (-1)^{a} s_{(j,1^{a-1})}(z) \/, 
\end{equation}
where $s_\mu(z)$ denotes the Schur polynomial, and $(j,1^{a-1})$ is the partition
$(j, 1, \ldots, 1)$ with $a-1$ $1$'s. We refer to \cite{lenart:combinatorial} for more about these polynomials. 
We record a Cauchy-type identity for Grothendieck polynomials. It is likely well known, but we
could not find a reference. 
\begin{lemma} If $\ell \ge 1$ and $z=(z_1, \ldots, z_k)$, then 
\begin{equation}\label{E:cauchy-groth} \sum_{i,j \ge 0\/; i+j=\ell} (-1)^j e_i(z) G_j(z) = e_{\ell+1}(z) - e_{\ell+2}(z) + \ldots  \/. \end{equation}
In particular for $\ell \ge k$, 
$\sum_{i,j \ge 0 \/; i+j=\ell} (-1)^j e_i(z) G_j(z) = 0$.
\end{lemma}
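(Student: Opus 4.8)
The plan is to reduce everything to the known expansion \eqref{E:Gtoh} of $G_j$ in terms of complete homogeneous symmetric functions and to the classical Cauchy identity $\sum_{i+j=m}(-1)^i e_i(z) h_j(z) = 0$ for $m \ge 1$ (with value $1$ for $m=0$). First I would substitute \eqref{E:Gtoh} into the left-hand side, writing $G_j(z) = \sum_{a,b\ge 0,\ a+b\le k}(-1)^b h_{j+a}(z) e_b(z)$, so that
\[
\sum_{i+j=\ell}(-1)^j e_i(z) G_j(z) \;=\; \sum_{i+j=\ell}\ \sum_{\substack{a,b\ge 0\\ a+b\le k}} (-1)^{j+b}\, e_i(z)\, e_b(z)\, h_{j+a}(z).
\]
Then I would reorganize the triple sum by fixing the value $m := j+a$ (the index on the $h$) and the value of $b$, and summing over $i$, $j$, $a$ subject to $i+j=\ell$, $j+a=m$, $a+b\le k$. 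Grouping the $e$'s, the inner sum over $i$ (equivalently over $j$, hence over $a=m-j$) should collapse by the classical Cauchy identity applied to $\sum_{i+a'=\ell+m-\ell'}\dots$; the bookkeeping is routine but must be done carefully because of the truncation constraint $a+b\le k$, which is exactly what prevents full cancellation and leaves the tail $e_{\ell+1}-e_{\ell+2}+\dots$.

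An alternative, and probably cleaner, route is to argue via generating functions. Recall the generating series identities $\sum_j G_j(z) t^j = \prod_{i=1}^k \frac{1}{1 - t\, z_i/(1-z_i)}\cdot(\text{correction})$—more precisely one uses that the single-row Grothendieck polynomials have generating function $\sum_{j\ge 0} G_j(z)\,t^j = \prod_{i=1}^k \frac{1-z_i}{1-z_i-t z_i}$ up to reindexing, while $\sum_i e_i(z)(-t)^i = \prod_{i=1}^k(1-t z_i)$ and $\sum_i h_i(z) t^i = \prod_{i=1}^k \frac{1}{1-t z_i}$. Multiplying the $e$-series and the $G$-series and extracting the coefficient of $t^\ell$ (with the sign $(-1)^j$ absorbed appropriately) should yield a product that telescopes to $\sum_{m\ge \ell+1}(-1)^{m-\ell-1} e_m(z) t^m$, giving exactly the right-hand side. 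I would present whichever of the two computations is shorter once the precise form of the Grothendieck generating function is pinned down.

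For the last assertion, that the sum vanishes when $\ell \ge k$: this is immediate from the closed form just obtained, since $e_m(z) = 0$ for $m > k$, so the tail $e_{\ell+1}(z) - e_{\ell+2}(z) + \cdots$ is empty once $\ell \ge k$. The main obstacle I anticipate is purely combinatorial: handling the truncation $a+b\le k$ in \eqref{E:Gtoh} correctly, so that the cancellation from the Cauchy identity is applied only in the range where it is valid and the residual terms are identified with the correct signs. This is the step where an off-by-one error is easiest to make, so I would double-check it against the small case $k=1$ (where $G_j(z_1) = z_1^j/(1-z_1)^{\,?}$—in fact $G_j(z_1)=\sum_{a\ge 0} h_{j+a}(z_1) \cdot(\text{here only }b=0)= \sum_{a\ge 0} z_1^{j+a}$, i.e. $z_1^j/(1-z_1)$ as a formal series truncated appropriately) and against $\ell = 1$, where both sides should reduce to a short explicit polynomial.
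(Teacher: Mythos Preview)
Your primary approach---substituting the Lenart expansion \eqref{E:Gtoh} and reducing to the classical Cauchy identity $\sum_{i+j=m}(-1)^i e_i h_j = \delta_{m,0}$---is sound and does yield the result, though the bookkeeping you defer is the entire content. Carried out: after fixing $(a,b)$ with $a+b\le k$ and summing over $j$, the inner sum $\sum_{j=0}^\ell (-1)^j e_{\ell-j} h_{j+a}$ equals (by Cauchy and an index shift) $\sum_{s=1}^a (-1)^{s+1} e_{\ell+s} h_{a-s}$, which vanishes for $a=0$. Swapping the order of summation and applying Cauchy once more to the residual sum over $(a,b)$ collapses everything to $\sum_{s\ge 1}(-1)^{s+1} e_{\ell+s}$, the claimed right-hand side.

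This is a genuinely different route from the paper's. The paper substitutes the hook-Schur expansion \eqref{E:Gj}, $G_j = \sum_{a=1}^k (-1)^{a-1} s_{(j,1^{a-1})}$, and reduces to proving $\sum_{j=1}^\ell (-1)^j e_{\ell-j}\, s_{(j,1^{a-1})} = -e_{\ell+a-1}$ for each $a\ge 1$. The case $a=1$ is classical Cauchy; for $a>1$ the paper uses the Pieri rule to argue that every Schur function $s_\mu$ with $\mu_1>1$ appears in exactly two consecutive terms with opposite signs, so only the single column $-e_{\ell+a-1}$ survives. Your approach trades the Schur/Pieri combinatorics for a second application of the $e$-$h$ Cauchy identity, which is more elementary and avoids any appeal to Littlewood--Richardson-type rules; the paper's argument is more structural (it explains \emph{which} shapes survive) but presupposes fluency with Pieri.

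Two small corrections. First, your $k=1$ sanity check is off: the constraint $a+b\le 1$ in \eqref{E:Gtoh} gives $G_j(z_1)=h_j+h_{j+1}-h_j e_1 = z_1^j$, not $z_1^j/(1-z_1)$; with this the $k=1$ identity reads $(-1)^\ell z_1^\ell + (-1)^{\ell-1} z_1\cdot z_1^{\ell-1}=0$, as expected. Second, your generating-function alternative is too vague to assess (the series you write for $\sum G_j t^j$ is not quite right and the ``correction'' is unspecified); I would drop it and present the direct computation.
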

{\begin{proof}
From \eqref{E:Gj} we obtain
    \begin{align*}
      \sum_{i+j=\ell} (-1)^j e_i(z) G_j(z) &\:=\:  e_\ell(z) + \sum_{j=1}^{\ell} \sum_{a=1}^{k}(-1)^{j+a-1} e_{\ell-j} (z) s_{(j,1^{a-1})}(z) \\
        &\:=\:  e_\ell(z) + \sum_{a=1}^k (-1)^{a-1} \left[\sum_{j=1}^{\ell} (-1)^{j} e_{\ell-j}(z) s_{(j,1^{a-1})}(z) \right]\ .
    \end{align*}
To prove the lemma, it suffices to show that 
\[\sum_{j=1}^{\ell} (-1)^{j} e_{\ell-j}(z) s_{(j,1^{a-1})}(z) = - e_{\ell + a- 1}(z) \/. \]
The case when $a=1$ follows from  
$\sum_{i+j=\ell} (-1)^j e_i(z) h_j(z) =0$ (i.e., the usual Cauchy formula)
for $\ell \ge 1$. For $a>1$ we utilize the Pieri formula   
to multiply a Schur function by $e_i(z)$ (cf~e.g.~\cite{fulton:young}).
To start, only Schur functions $s_\mu$ with $\mu_1 \le \ell+1$ and $1 \le \mu_2 \le 2$ 
may appear.
Furthermore, if $\mu=(\mu_1, \ldots, \mu_s)$ is such a partition with $1<\mu_1\le \ell+1$,
then the Schur function $s_\mu$ appears in the left hand side twice, 
in the expansion of the terms
\[ (-1)^{\mu_1}e_{\ell-\mu_1}(z) s_{\mu_1,1^{a-1}}(z) + (-1)^{\mu_1-1}e_{\ell-\mu_1+1}(z) s_{\mu_1-1,1^{a-1}}(z) \/. \] 
Since the coefficients of $s_\mu$ appearing in the Pieri rule equal to $1$, and since 
the terms above have opposite signs, it follows that the corresponding 
$s_\mu$'s cancel. We are left with the situation when $\mu_1 =1$. Then necessarily 
$j=1$, i.e., we consider the terms $- e_{\ell-1}(z) e_{a}(z)$. Again by Pieri formula, 
the only multiple of $s_\mu$ with $\mu_1=1$ is $- e_{\ell+a-1}(z)$. 
This finishes the proof.\end{proof}
}

Define the column matrices:
\[ H= \begin{pmatrix} (-1)^{n-k+1} h_{n-k+1} \\ (-1)^{n-k+2} h_{n-k+2} \\ \vdots \\ (-1)^{n} h_{n} \end{pmatrix} \/; \quad 
G= \begin{pmatrix} (-1)^{n-k+1} G_{n-k+1} \\ (-1)^{n-k+2} G_{n-k+2} \\ \vdots \\ (-1)^{n} G_{n} \end{pmatrix} \/.\]
Define also the $k \times k$ matrix
\begin{equation*}\label{E:Amat}
{
A \: = \:
\begin{pmatrix}
c_{ \le k-1} &- c_{ \le k-2} &\cdots &\cdots &\cdots &\cdots         &(-1)^{k-1} \\ 
                (-1)^{k-2}c_{ \ge k}     &c_{\le k-2}           &\cdots &\cdots &\cdots &\cdots         &(-1)^{k-2} \\ 
                 \vdots  & \ddots       &\ddots &\ddots &\cdots &\cdots &\vdots \\
                 (-1)^{k-i}c_{ \ge k} &\cdots    & (-1)^{k-i} c_{ \ge k-i+2}      & c_{ \le k-i}  &-c_{\le k-i-1}   &\cdots  &(-1)^{k-i} \\
                 \vdots & \ddots        &\ddots &\ddots &\ddots &\cdots &\vdots \\
                 -c_{\ge k} &\cdots &\cdots &\cdots &\cdots &c_{\le 1} &-1 \\
                 c_{ \ge k}  &\cdots &\cdots &\cdots &\cdots  &c_{\ge 2} & 1 \\
\end{pmatrix}
}
\end{equation*}
\begin{lemma}\label{lemma:GAH} The following equality holds: $G = A H$. 
\end{lemma}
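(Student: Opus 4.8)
The plan is to prove $G = AH$ entrywise. Reading off the displayed matrix, $A_{r,s} = (-1)^{s-r} c_{\le k-s}$ when $s \ge r$ and $A_{r,s} = (-1)^{k-r} c_{\ge k-s+1}$ when $s < r$, so after dividing the $r$-th entry of $G = AH$ by $(-1)^{n-k+r}$ and simplifying signs, the claim is equivalent to the family of identities
\[
(\star_r)\qquad G_{n-k+r} \;=\; \sum_{s=r}^{k} c_{\le k-s}\, h_{n-k+s} \;+\; \sum_{s=1}^{r-1} (-1)^{k+s}\, c_{\ge k-s+1}\, h_{n-k+s}, \qquad 1 \le r \le k.
\]
Here $c_{\le m}=c_{\le m}^z$, $c_{\ge m}=c_{\ge m}^z$, and I write $c^z=c_{\le k}^z=\prod_i(1-z_i)$ for the full Chern polynomial; throughout $e_i=e_i(z)$, $h_i=h_i(z)$.

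First I would rewrite Lenart's formula \eqref{E:Gtoh}: grouping its double sum by $a$ and using $\sum_{b=0}^{k-a}(-1)^b e_b = c_{\le k-a}$ gives
\[
G_j \;=\; \sum_{a=0}^{k} c_{\le k-a}\, h_{j+a}, \qquad j \ge 1.
\]
From this I would derive the one-step recursion
\[
G_{n-k+r+1} \;=\; G_{n-k+r} \;-\; c^z\, h_{n-k+r}, \qquad 1 \le r \le k-1 .
\]
Indeed, subtracting the two instances of the reorganized Lenart formula and shifting the summation index, the difference $G_{n-k+r+1}-G_{n-k+r}$ collapses to $\sum_{i=0}^{k}(-1)^i e_i\, h_{n+r+1-i} - c^z\, h_{n-k+r}$, and $\sum_{i=0}^{k}(-1)^i e_i h_{m-i}=0$ for every $m\ge 1$ by the usual Cauchy formula (here $m=n+r+1$). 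The base case $(\star_1)$ is the same shift-and-Cauchy computation applied once to $G_{n-k+1}=\sum_{a=0}^{k} c_{\le k-a}\,h_{n-k+1+a}$, which yields $G_{n-k+1}=\sum_{s=1}^{k} c_{\le k-s}\,h_{n-k+s}$.

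It then remains to check that the right-hand sides of $(\star_r)$ satisfy the same recursion. In the difference of $(\star_{r+1})$ and $(\star_r)$ only the $s=r$ term survives, giving $h_{n-k+r}\bigl(-c_{\le k-r}+(-1)^{k+r}c_{\ge k-r+1}\bigr)$, and this equals $-c^z\, h_{n-k+r}$ because $(-1)^{k-r+1}c_{\ge k-r+1}=c^z-c_{\le k-r}$ directly from the definition $c_{\ge m}=(-1)^m(c^z-c_{\le m-1})$. Since the sequences $(G_{n-k+r})_{r\ge 1}$ and $\bigl(\text{RHS of }(\star_r)\bigr)_{r\ge 1}$ obey the same first-order recursion in $r$ and agree at $r=1$, they agree for $1 \le r \le k$, which is exactly $G=AH$. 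There is no conceptual obstacle here; the only delicate point is the bookkeeping of index shifts and signs — in particular correctly matching the two regimes ($s\ge r$ and $s<r$) in the entries of $A$ with the terms produced by the Cauchy reduction, and keeping straight the elementary relation between the truncations $c_{\le m}$ and $c_{\ge m}$.
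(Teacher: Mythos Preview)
Your proof is correct and uses exactly the ingredients the paper cites --- Lenart's formula \eqref{E:Gtoh} rewritten as $G_j=\sum_{a=0}^k c_{\le k-a}\,h_{j+a}$, together with the Cauchy identity $\sum_{i}(-1)^i e_i h_{m-i}=0$. The only difference is organizational: you package the verification as a recursion in $r$ (base case plus one-step relation $G_{n-k+r+1}=G_{n-k+r}-c^z h_{n-k+r}$), whereas the paper leaves the entrywise check implicit; both amount to the same computation.
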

\begin{proof} This follows from \eqref{E:Gtoh} together with the Cauchy formula
$\sum_{a+b=j} (-1)^a h_a e_b =0$ for $j \ge 1$.
\end{proof}

\subsection{Equivariant deformations} Motivated by propositions \ref{prop:Ehat_groth} and \ref{prop:Etilde_h} 
below, define the polynomials
$h'_j,G'_j \in \C[z,\zeta]$ by
\[  h'_j(z,\zeta) = \sum_{a+b=j} (-1)^{a} e_a(\zeta) h_b(z)\/; \quad 
G_j' (z,\zeta)=\sum_{a+b=j} (-1)^{a} e_a(\zeta) G_b(z) \/. \]
Note that both are symmetric polynomials in variables $z$ and $\zeta$. 
The polynomial $h'_j(z,\zeta)$ is a specialization of the factorial complete 
homogeneous polynomial (see e.g.~\cite{mihalcea:giambelli}), but
the polynomial $G'_j(z,\zeta)$ is far from being the factorial 
Grothendieck polynomial \cite{mcnamara:factorial}, which is much more complicated. 
\begin{footnote}{The factorial Grothendieck polynomials, which represent the equivariant
Schubert classes in $\K_T(\Gr(k;n))$, are not symmetric in $\zeta$.}\end{footnote} 
\begin{lemma}\label{lemma:cauchy-h} The following Cauchy-type formulae hold: 

(a) If $\ell \ge 1$ then 
\[ \sum_{a+b=\ell} (-1)^a h_a'(z,\zeta) e_b(z) = e_\ell(\zeta) \/. \]

(b) If $\ell \ge 1$ then
\[ \sum_{a+b=\ell} (-1)^a G_a'(z,\zeta) e_b(z) = c'_{\ge \ell}(z,\zeta) \/,\]
where $c'_{\ge \ell}(z,\zeta)$ is defined in \eqref{E:cplambda}.
\end{lemma}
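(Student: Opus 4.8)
The plan is to deduce both identities from Cauchy-type formulae that are already on the table: the classical identity $\sum_{b+d=m}(-1)^d e_b(z)h_d(z)=\delta_{m,0}$ for part (a), and the Grothendieck Cauchy formula \eqref{E:cauchy-groth} for part (b). In each case one expands the definition of $h'_a$ (respectively $G'_a$), interchanges the order of summation, and isolates the $\zeta$-variables.

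Concretely, for (a) I would substitute $h'_a(z,\zeta)=\sum_{c+d=a}(-1)^c e_c(\zeta)h_d(z)$ into $\sum_{a+b=\ell}(-1)^a h'_a(z,\zeta)e_b(z)$. Since $(-1)^a(-1)^c=(-1)^{c+d}(-1)^c=(-1)^d$, the resulting triple sum collapses to $\sum_{b+c+d=\ell}(-1)^d e_b(z)e_c(\zeta)h_d(z)$; grouping by the value of $c$ gives $\sum_c e_c(\zeta)\bigl(\sum_{b+d=\ell-c}(-1)^d e_b(z)h_d(z)\bigr)$, and by the classical Cauchy identity the inner sum vanishes unless $c=\ell$, leaving $e_\ell(\zeta)$. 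Part (b) runs word for word with $h$ replaced by $G$, the only change being that the inner sum $\sum_{b+d=m}(-1)^d e_b(z)G_d(z)$ equals $c^z_{\ge m+1}$ for $m\ge 1$ by \eqref{E:cauchy-groth} (using that $e_{m+1}(z)-e_{m+2}(z)+\cdots=c^z_{\ge m+1}$), while for $m=0$ it equals $G_0(z)=1$. This yields $e_\ell(\zeta)+\sum_{c=0}^{\ell-1}e_c(\zeta)\,c^z_{\ge \ell-c+1}$.

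The last step is to recognize this expression as $c'_{\ge\ell}(z,\zeta)$ from \eqref{E:cplambda}. Reindexing by $j=\ell-c$ turns it into $e_\ell(\zeta)+\sum_{j\ge 1}e_{\ell-j}(\zeta)\,c^z_{\ge j+1}$; the terms with $j\ge k$ drop out because $c^z_{\ge j+1}=0$ once $j+1>k$, and the terms with $j>\ell$ drop out because $e_{\ell-j}(\zeta)=0$, so what remains is exactly the $k$-term sum defining $c'_{\ge\ell}(z,\zeta)$. I do not expect a genuine obstacle here: both parts are formal symmetric-function bookkeeping, and the only point requiring a moment's care is the boundary term $m=0$ in (b), where \eqref{E:cauchy-groth} does not directly apply but is replaced by $G_0=1$, together with a check of the vanishing conventions $e_i(z)=0$ for $i>k$ and $e_i(\zeta)=0$ for $i<0$ implicit in \eqref{E:cplambda}.
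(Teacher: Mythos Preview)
Your proposal is correct and follows essentially the same approach as the paper: expand $h'_a$ (respectively $G'_a$) by its definition, collect the coefficient of each $e_c(\zeta)$, and apply the classical Cauchy formula (respectively \eqref{E:cauchy-groth}) to the resulting inner sum in $z$. The paper states this as ``collecting the coefficients of $e_i(\zeta)$ from both sides,'' which is exactly your triple-sum regrouping; your treatment of the boundary term $m=0$ and the matching with the definition \eqref{E:cplambda} of $c'_{\ge\ell}$ is in fact more explicit than what the paper writes out.
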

\begin{proof} For (a), we collect the coefficients of $e_i(\zeta)$ from both sides. On the left hand side this coefficient equals to 
\[ \sum_{i \le a} (-1)^{a} e_{\ell-a}(z) (-1)^{i} h_{a-i}(z) = \sum_{i \le a} (-1)^{a-i} e_{\ell-a} (z) h_{a-i}(z) \/. \]
By Cauchy formula, this equals to $0$ if $i <\ell$, and it equals to $1$ if $i=\ell$, thus proving part (a). 
We apply the same strategy for (b). The coefficient of $e_\ell(\zeta)$ on the left hand side equals to $1$, 
and a calculation similar to (a) shows that if $i < \ell$, the coefficient of $e_i(\zeta)$ is
\[ \sum_{i \le a} (-1)^{a} e_{\ell-a} (z) (-1)^i G_{a-i}(z) = e_{\ell-i+1} (z) - e_{\ell-i+2} (z) + \ldots = c_{\ge \ell - i+1}^z \/. \]
Here the first equality follows from the Cauchy-type formula 
\eqref{E:cauchy-groth}. This finishes the proof of (b).\end{proof}
Our next task is to write down the analogue of \Cref{lemma:GAH} relating the polynomials $G'_\ell$ and $h'_\ell$. To this aim,  
in analogy to \eqref{E:Gtoh}, define the polynomial
\begin{equation}\label{E:modG}
G_j^{\dagger}(z,\zeta) = \sum (-1)^b h'_{j+a}(z,\zeta) e_b(z) = h'_j+(h'_{j+1} - h'_j e_1) + (h'_{j+2} - h'_{j+1} e_1+ h'_j e_2) +  \ldots  \/, \end{equation}
where the sum is over $a,b \ge 0$ and $a+b \le k$. 

\begin{lemma}\label{lemma:G!toG'} For any $\ell \ge 1$, the following holds:
\[ G_\ell^{\dagger}(z,\zeta) - G'_\ell(z,\zeta) = (-1)^{\ell+1} (e_{\ell+1}(\zeta) - e_{\ell+2}(\zeta) + \ldots + (-1)^{k-1} e_{\ell+k}(\zeta)) \/.\]
\end{lemma}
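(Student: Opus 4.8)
The plan is to expand $G_\ell^{\dagger}$ by substituting the definition of $h'_{\ell+a}(z,\zeta)$ into \eqref{E:modG}, interchange the order of summation, recognize the resulting inner sums as Grothendieck polynomials $G_m(z)$ (for $m\ge 0$, with the convention $G_0:=1$) or, for a boundary range of indices, as the constant $1$, and then match the outcome against the definition of $G'_\ell$. The identity will then drop out as a short telescoping.

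Concretely, I would first write $h'_{\ell+a}(z,\zeta)=\sum_{p\ge 0}(-1)^p e_p(\zeta)\,h_{\ell+a-p}(z)$, with the convention $h_{<0}=0$, and insert this into \eqref{E:modG}. Collecting the coefficient of $(-1)^p e_p(\zeta)$ yields
\[
G_\ell^{\dagger}(z,\zeta)=\sum_{p\ge 0}(-1)^p e_p(\zeta)\,T_{\ell-p},
\qquad
T_m:=\sum_{\substack{a,b\ge 0\\ a+b\le k}}(-1)^b e_b(z)\,h_{m+a}(z).
\]
The heart of the argument is the evaluation of $T_m$. For $m\ge 1$ the sum defining $T_m$ is literally the expression for $G_m(z)$ in \eqref{E:Gtoh}, so $T_m=G_m(z)$; the case $m=0$ gives $T_0=1$, consistent with setting $G_0=1$. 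For $-k\le m\le -1$, writing $m=-r$ and using $h_{a-r}=0$ when $a<r$, the substitution $a\mapsto a+r$ turns $T_{-r}$ into $\sum_{a+b\le k-r}(-1)^b e_b(z)h_a(z)$, which by the ordinary Cauchy identity $\sum_{a+b=j}(-1)^b e_b h_a=\delta_{j,0}$ collapses to $1$. For $m<-k$ the sum is empty and $T_m=0$.

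Plugging these values back in, the terms with $0\le p\le \ell$ reassemble (via $T_{\ell-p}=G_{\ell-p}$) into $\sum_{p=0}^{\ell}(-1)^p e_p(\zeta)G_{\ell-p}(z)=G'_\ell(z,\zeta)$, the terms with $\ell+1\le p\le \ell+k$ contribute $\sum_{p=\ell+1}^{\ell+k}(-1)^p e_p(\zeta)$, and all higher $p$ vanish. Hence
\[
G_\ell^{\dagger}(z,\zeta)-G'_\ell(z,\zeta)=\sum_{p=\ell+1}^{\ell+k}(-1)^p e_p(\zeta)
=(-1)^{\ell+1}\bigl(e_{\ell+1}(\zeta)-e_{\ell+2}(\zeta)+\cdots+(-1)^{k-1}e_{\ell+k}(\zeta)\bigr),
\]
which is exactly the asserted formula. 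I do not expect a genuine obstacle here: the only thing that requires care is the bookkeeping of the index conventions ($h_{<0}=0$, $e_{>k}=0$, $G_0=1$) and correctly pinning down the boundary values $T_m=1$ for $-k\le m\le 0$ — the rest is the two Cauchy-type identities already recorded in the excerpt.
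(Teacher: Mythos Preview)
Your proposal is correct and follows essentially the same approach as the paper, which simply says the result follows ``from a direct calculation, by identifying the coefficients of $e_i(\zeta)$ in both sides'' and leaves the details to the reader. Your evaluation of the inner sums $T_m$ (distinguishing the ranges $m\ge 1$, $-k\le m\le 0$, $m<-k$) is exactly that coefficient comparison carried out explicitly, and your bookkeeping of the conventions $h_{<0}=0$, $G_0=1$ is handled correctly.
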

\begin{proof} This follows from a direct calculation, by identifying the coefficients of
$e_i(\zeta)$ in both sides. We leave the details to the reader.\end{proof} 
Define the column matrices:
\[ H'= \begin{pmatrix} (-1)^{n-k+1} h'_{n-k+1} \\ (-1)^{n-k+2} h_{n-k+2} \\ \vdots \\ (-1)^{n} h'_{n} \end{pmatrix} \/; \quad 
G'= \begin{pmatrix} (-1)^{n-k+1} G'_{n-k+1} \\ (-1)^{n-k+2} G'_{n-k+2} \\ \vdots \\ (-1)^{n} G'_{n} \end{pmatrix} \/.\]

\begin{cor}\label{cor:eqGAH} The following holds: $G'=AH'+C^\zeta_{\ge n-k+2}$. 
\end{cor}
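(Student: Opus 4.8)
The plan is to prove \Cref{cor:eqGAH} in two independent steps, bridging the two sides through the auxiliary polynomials $G^\dagger_\ell(z,\zeta)$ introduced in \eqref{E:modG}. Write $G^\dagger$ for the column matrix with $i$-th entry $(-1)^{n-k+i}G^\dagger_{n-k+i}$, $1\le i\le k$. First I would establish the two identities $G'=G^\dagger+C^\zeta_{\ge n-k+2}$ and $G^\dagger=AH'$; combining them gives the claim.

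For the first identity I would simply specialize \Cref{lemma:G!toG'}. For $n-k\le\ell\le n$ one has $\ell+k\ge n$, so the alternating sum $e_{\ell+1}(\zeta)-e_{\ell+2}(\zeta)+\cdots+(-1)^{k-1}e_{\ell+k}(\zeta)$ appearing there terminates at $e_n(\zeta)$ (recall $e_m(\zeta)=0$ for $m>n$), hence equals $c^\zeta_{\ge\ell+1}$. Therefore $G^\dagger_\ell-G'_\ell=(-1)^{\ell+1}c^\zeta_{\ge\ell+1}$, i.e.\ $(-1)^\ell G'_\ell=(-1)^\ell G^\dagger_\ell+c^\zeta_{\ge\ell+1}$ for $n-k+1\le\ell\le n$. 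Since $c^\zeta_{\ge n+1}=0$, the resulting column of corrections is precisely $C^\zeta_{\ge n-k+2}$, which is the first identity.

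For the second identity I would rerun the proof of \Cref{lemma:GAH}. Collecting in \eqref{E:Gtoh} the coefficient of each $h_c$ gives $G_\ell=\sum_{a=0}^{k}c_{\le k-a}^z\,h_{\ell+a}$, and $G=AH$ results from using the Cauchy recursion $h_c=e_1h_{c-1}-e_2h_{c-2}+\cdots+(-1)^{k-1}e_kh_{c-k}$ to eliminate, inside $G_{n-k+i}$, the terms $h_c$ with $c=n+1,\dots,n+i$, thereby expressing $G_{n-k+i}$ as a $\K_T(pt)$-linear combination of $h_{n-k+1},\dots,h_n$ whose coefficients (polynomials in the $e_j(z)$) are the entries of $A$. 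By \eqref{E:modG} the identical expansion $G^\dagger_\ell=\sum_{a=0}^{k}c_{\le k-a}^z\,h'_{\ell+a}$ holds, and by \Cref{lemma:cauchy-h}(a) the sequence $(h'_c)$ obeys $h'_c=e_1h'_{c-1}-e_2h'_{c-2}+\cdots+(-1)^{k-1}e_kh'_{c-k}+(-1)^c e_c(\zeta)$.

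The only point that needs care — and what I regard as the main obstacle — is checking that the inhomogeneous term $(-1)^c e_c(\zeta)$ of the $h'$-recursion is invisible to this elimination: the indices $c$ eliminated when passing from $G^\dagger_{n-k+i}$ (for $1\le i\le k$) to a combination of $h'_{n-k+1},\dots,h'_n$ all lie in $\{n+1,\dots,n+k\}$ — reducing $h'_{n+i}$ produces only indices in $\{n-k+1,\dots,n+k-1\}$, and indices $\le n$ already belong to the target range — and $e_c(\zeta)=0$ for every such $c$. Hence the recursion actually used coincides with the homogeneous one of the $h$-case, the same matrix $A$ appears, and $G^\dagger=AH'$. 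Combined with the first step this yields $G'=AH'+C^\zeta_{\ge n-k+2}$, as desired.
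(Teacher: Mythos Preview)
Your proposal is correct and follows essentially the same approach as the paper's proof: both introduce the auxiliary column $G^\dagger$, obtain $G^\dagger = AH'$ by rerunning the argument of \Cref{lemma:GAH}, and then invoke \Cref{lemma:G!toG'} to pass from $G^\dagger$ to $G'$, picking up the correction $C^\zeta_{\ge n-k+2}$. Your additional care in verifying that the inhomogeneous term $(-1)^c e_c(\zeta)$ in the $h'$-recursion vanishes (since only indices $c>n$ are eliminated) makes explicit the one point the paper leaves implicit in the phrase ``as in the proof of \Cref{lemma:GAH}''.
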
 
\begin{proof} Let $G^\dagger$ be the column matrix with entries $(-1)^{n-k+i} G^\dagger_{n-k+i}(z,\zeta)$. 
As in the proof of \Cref{lemma:GAH}, $G^\dagger = A H'$. Then the claim follows from the formula in \Cref{lemma:G!toG'}.\end{proof}
\subsection{Elimination of variables} Our strategy is to utilize the Vieta relations in order to
eliminate the variables $\hat{z}_i, \tilde{z}_i$ from the quantum rings $\widehat{\QK}_T(\Gr(k;n))$
and $\widetilde{\QK}_T(\Gr(k;n))$ respectively, then prove that the resulting rings are isomorphic.
It turns out that the elimination process naturally leads to the study of variations of the 
Grothendieck, respectively
the complete homogeneous polynomials.
\begin{prop}\label{prop:Ehat_groth} 
Consider the Coulomb branch ring
$\widehat{\QK}_T(\Gr(k;n))$ from \eqref{eq:ring-phys}.
Then the following holds for $1 \le \ell \le n-k$:
\[ e_\ell(\hat{z}) \equiv \sum_{i+j=\ell} (-1)^j e_i(\zeta) G_j(z) = (-1)^\ell G'_\ell(z,\zeta) \mod \hat{J} \/. \]
\end{prop}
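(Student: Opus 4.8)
The plan is to exploit the Vieta relations \eqref{E:Vietazhatz} defining $\hat{J}$ together with the explicit form of $\hat{g}_\ell(z,\zeta,q)$ from \Cref{lemma:gell}, restricted to the range $1\le\ell\le n-k$ where $\hat{g}_\ell = c'_{\ge\ell}(z,\zeta)$ is $q$-independent. For $1\le\ell\le n-k$ the Vieta relation in $\hat J$ reads
\[
\sum_{i+j=\ell} e_i(z)\,e_j(\hat z)\;\equiv\; c'_{\ge\ell}(z,\zeta)\pmod{\hat J}\/,
\]
and since this holds for every $\ell$ from $1$ up to $n-k$, it is an upper-triangular linear system (with $1$'s on the diagonal) in the unknowns $e_1(\hat z),\ldots,e_{n-k}(\hat z)$ over the ring $\K_T(pt)[[q]][e_\bullet(z)]$. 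First I would invert this system recursively: $e_\ell(\hat z)$ is determined mod $\hat J$ as a polynomial in $e_i(z)$ and the $c'_{\ge j}(z,\zeta)$ for $j\le\ell$.

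The key step is to recognize the solution of that triangular system as $(-1)^\ell G'_\ell(z,\zeta)$. This is exactly what \Cref{lemma:cauchy-h}(b) supplies: that lemma states
\[
\sum_{a+b=\ell}(-1)^a G'_a(z,\zeta)\,e_b(z)\;=\;c'_{\ge\ell}(z,\zeta)\/,
\]
which, after writing $(-1)^a G'_a = e_a(\hat z)$ as a candidate, is precisely the defining relation of $\hat J$ in degree $\ell$. Thus by uniqueness of the solution to a unitriangular linear system, $e_\ell(\hat z)\equiv(-1)^\ell G'_\ell(z,\zeta)\pmod{\hat J}$ for $1\le\ell\le n-k$. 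So the bulk of the argument is: rewrite \eqref{E:Vietazhatz} in the classical range, observe it is unitriangular in the $e_\bullet(\hat z)$, and match it term-by-term against \Cref{lemma:cauchy-h}(b).

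It remains only to check the stated identity $\sum_{i+j=\ell}(-1)^j e_i(\zeta)G_j(z) = (-1)^\ell G'_\ell(z,\zeta)$, which is immediate from the definition $G'_\ell(z,\zeta)=\sum_{a+b=\ell}(-1)^a e_a(\zeta)G_b(z)$: set $a=i$, $b=j$, and note $(-1)^\ell\cdot(-1)^i = (-1)^j$ since $i+j=\ell$. So this piece is a one-line sign bookkeeping and carries no content beyond unwinding notation. The main obstacle, such as it is, is purely organizational — making sure that the indexing in the Vieta relations, in $c'_{\ge\ell}$, and in \Cref{lemma:cauchy-h}(b) line up (in particular that the ranges of $\ell$ match and that $e_j(\hat z)=0$ for $j>n-k$ does not interfere, which it does not since we only use $\ell\le n-k$); there is no genuine analytic or algebraic difficulty once \Cref{lemma:cauchy-h} is in hand.
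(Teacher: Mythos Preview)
Your proposal is correct and is essentially the same argument as the paper's: the paper proceeds by explicit induction on $\ell$, isolating $e_\ell(\hat z)$ from the Vieta relation, applying \Cref{lemma:cauchy-h}(b), and using the formula $\hat g_\ell = c'_{\ge\ell}$ from \Cref{lemma:gell}, which is exactly the recursive inversion of the unitriangular system you describe. Your phrasing via uniqueness of the unitriangular solution is a slightly more packaged way of saying the same thing, but the mathematical content and the key inputs are identical.
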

\begin{proof} We proceed by induction on $\ell \ge 1$. If $\ell =1$, the claim follows from 
the definition of $g_1(z,\zeta,q)$ and the fact that $G_1(z) = \sum_{i=1}^k (-1)^i e_i(z)$
from \Cref{E:Gj}. By induction and the relations \eqref{E:Vietazhatz}
we obtain that for $\ell \ge 2$,
\begin{equation}\label{E:induction} \begin{split} e_\ell(\hat{z}) & \equiv g_\ell(z,\zeta,q) - \sum_{s=0}^{\ell-1}
e_{\ell -s}(z) \left(\sum_{i+j=s} (-1)^j e_i(\zeta) G_j(z) \right) \\ & = g_\ell(z,\zeta,q) - \sum_{s=0}^{\ell-1}
e_{\ell -s}(z) (-1)^s G'_s (z,\zeta) 
\\ & =  g_\ell(z,\zeta,q) + (-1)^\ell G'_\ell(z,\zeta) - c'_{\ge \ell} (z,\zeta)
\\ & = (-1)^\ell G'_\ell(z,\zeta) 
\/. \end{split} \end{equation}
Here the third equality follows from \Cref{lemma:cauchy-h}(b), and the fourth follows from the 
definition of $\hat{g}_\ell$ from \Cref{lemma:gell}.
\end{proof}

\begin{prop}\label{prop:Etilde_h} Consider the quantum $\K$ Whitney ring
$\widetilde{\QK}_T(\Gr(k;n))$ from \eqref{eq:ring-math}. 
Then the following holds for $1 \le \ell \le n-k$:
\[ {e_\ell(\tilde{z})} \equiv \sum_{i+j=\ell} (-1)^j e_i(\zeta) h_j(z) = (-1)^\ell h'_\ell(z,\zeta) \mod \tilde{J} \/. \]
\end{prop}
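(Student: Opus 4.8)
The plan is to follow verbatim the structure of the proof of \Cref{prop:Ehat_groth}, with the Grothendieck polynomials $G_j(z)$ and their equivariant deformations $G'_j(z,\zeta)$ replaced everywhere by the complete homogeneous polynomials $h_j(z)$ and the deformations $h'_j(z,\zeta)$, and with part (b) of \Cref{lemma:cauchy-h} replaced by part (a). Concretely, I would induct on $\ell$ in the range $1 \le \ell \le n-k$, using as inputs: (i) the Vieta relations $\sum_{i+j=\ell} e_i(z)\,e_j(\tilde{z}) \equiv \tilde{g}_\ell(z,\zeta,q) \bmod \tilde{J}$ defining $\widetilde{\QK}_T(\Gr(k;n))$; (ii) the Cauchy-type identity $\sum_{a+b=\ell}(-1)^a h'_a(z,\zeta)\,e_b(z) = e_\ell(\zeta)$ from \Cref{lemma:cauchy-h}(a); and (iii) the fact that $\tilde{g}_\ell(z,\zeta,q) = e_\ell(\zeta)$ whenever $1 \le \ell \le n-k$, i.e.\ that the first $n-k$ relations are classical.

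For the base case $\ell = 1$ I would note that $h_1(z) = e_1(z)$, so $h'_1(z,\zeta) = h_1(z) - e_1(\zeta) = e_1(z) - e_1(\zeta)$, while $\tilde{g}_1(z,\zeta,q) = e_1(\zeta)$ by \eqref{E:tildeg2} (the $q$-correction vanishes since $\binom{k}{1+k-n} - e_{1+k-n}(z) = 0$ for $n-k \ge 1$). The $\ell=1$ Vieta relation $e_1(z) + e_1(\tilde{z}) - \tilde{g}_1 \in \tilde{J}$ then gives $e_1(\tilde{z}) \equiv e_1(\zeta) - e_1(z) = -h'_1(z,\zeta) \bmod \tilde{J}$, which is the claim.

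For the inductive step, fix $2 \le \ell \le n-k$ and assume $e_s(\tilde{z}) \equiv (-1)^s h'_s(z,\zeta) \bmod \tilde{J}$ for all $1 \le s < \ell$ (the identity also holds for $s=0$ since $h'_0 = 1$). Solving the $\ell$-th Vieta relation for the $i=0$ term and substituting the induction hypothesis yields
\[
e_\ell(\tilde{z}) \;\equiv\; \tilde{g}_\ell(z,\zeta,q) - \sum_{s=0}^{\ell-1}(-1)^s e_{\ell-s}(z)\,h'_s(z,\zeta) \bmod \tilde{J}.
\]
By \Cref{lemma:cauchy-h}(a), $\sum_{s=0}^{\ell}(-1)^s h'_s(z,\zeta)\,e_{\ell-s}(z) = e_\ell(\zeta)$, so the truncated sum above equals $e_\ell(\zeta) - (-1)^\ell h'_\ell(z,\zeta)$, giving
\[
e_\ell(\tilde{z}) \;\equiv\; \tilde{g}_\ell(z,\zeta,q) - e_\ell(\zeta) + (-1)^\ell h'_\ell(z,\zeta) \bmod \tilde{J}.
\]
The last step is to invoke (iii): inspecting \eqref{E:tildeg2}, the coefficient of $\frac{q}{1-q}$ is $(-1)^{n-k} e_{n-k}(1-\tilde{z})\bigl(\binom{k}{\ell+k-n} - e_{\ell+k-n}(z)\bigr)$, and for $\ell \le n-k$ we have $\ell+k-n \le 0$, so this difference is $1-1 = 0$ when $\ell = n-k$ and $0-0 = 0$ otherwise; hence $\tilde{g}_\ell(z,\zeta,q) = e_\ell(\zeta)$ and the two displayed terms cancel, leaving $e_\ell(\tilde{z}) \equiv (-1)^\ell h'_\ell(z,\zeta) \bmod \tilde{J}$.

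I do not anticipate a genuine obstacle: the argument is a transcription of the one for \Cref{prop:Ehat_groth}, and both non-trivial ingredients (\Cref{lemma:cauchy-h}(a) and the vanishing of the $q$-correction in $\tilde{g}_\ell$ for $\ell \le n-k$) are already in hand. The only point deserving explicit care is the boundary case $\ell = n-k$ of that vanishing, since there $\binom{k}{\ell+k-n} = \binom{k}{0} = 1$ is nonzero but is exactly cancelled by $e_0(z) = 1$; I would highlight this so the reader need not re-derive it.
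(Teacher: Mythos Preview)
Your proposal is correct and follows essentially the same approach as the paper, which simply states that the proof is ``similar to that for \Cref{prop:Ehat_groth}, utilizing the Cauchy formula in \Cref{lemma:cauchy-h}(a), and that in this case $\tilde{g}_\ell(z,\zeta) = e_\ell(\zeta)$ by \eqref{E:tildeg2}.'' You have written out in full exactly the argument the paper sketches, including the careful verification of the boundary case $\ell = n-k$ (and you correctly work with $e_\ell(\tilde z)$ rather than the apparent typo $e_\ell(\hat z)$ in the statement).
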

\begin{proof} The proof is similar to that for \Cref{prop:Ehat_groth}, utilizing the 
Cauchy formula in \Cref{lemma:cauchy-h}(a),
and that in this case
$\tilde{g}_\ell(z,\zeta) = e_\ell(\zeta)$ by \Cref{E:tildeg2}.
\end{proof}
Recall the notation from \eqref{E:cplambda} and the surrounding paragraphs.
Define the column matrices 
\begin{equation}\label{E:rcmat} \tilde{R}= \begin{pmatrix} \tilde{g}_{n-k+1} \\ \tilde{g}_{n-k+2} \\ \vdots \\ \tilde{g}_{n} \end{pmatrix} \/; \quad  
\hat{R}= \begin{pmatrix} \hat{g}_{n-k+1} \\ \hat{g}_{n-k+2} \\ \vdots \\ \hat{g}_{n} \end{pmatrix} \/; \quad 
E_{n-k+1}^\zeta = \begin{pmatrix} e_{n-k+1}(\zeta) \\ \vdots \\ \vdots \\ e_n(\zeta) \end{pmatrix}
\end{equation}
\begin{thm}\label{lemma:twopres} (a) The ring $\widetilde{\QK}(\Gr(k;n))$
is isomorphic to $\K_T(pt)[[q]][e_1, \ldots, e_k]/\widetilde{I}$,
where the ideal $\widetilde{I}$ is defined by $\tilde{R}=E H'+ E_{n-k+1}^\zeta$, i.e.,
\[ \begin{pmatrix} \tilde{g}_{n-k+1} \\ \tilde{g}_{n-k+2} \\ \vdots \\ \tilde{g}_{n} \end{pmatrix} =
\begin{pmatrix} -1 & 0 & \ldots & 0 \\ -e_1 & -1 & \ldots & 0 \\
\vdots & \vdots & \ddots & 0 \\
-e_{k-1} & -e_{k-2} & \ldots & -1 \end{pmatrix}
\begin{pmatrix} (-1)^{n-k+1} h'_{n-k+1} \\ (-1)^{n-k+2} h'_{n-k+2} \\ \vdots \\ (-1)^{n} h'_{n} \end{pmatrix} 
+ \begin{pmatrix} e_{n-k+1}(\zeta) \\ \vdots \\ \vdots \\ e_n(\zeta) \end{pmatrix}\]

(b) The ring $\widehat{\QK}(\Gr(k;n))$ is isomorphic to 
$\K_T(pt)[[q]][e_1, \ldots, e_k]/\widehat{I}$,
where the ideal $\widehat{I}_q$ is defined by $\hat{R}=E G'+C_{\ge n-k+1}^{z,\zeta} $, i.e., 
\[ \begin{pmatrix} \hat{g}_{n-k+1} \\ \hat{g}_{n-k+2} \\ \vdots \\ \hat{g}_{n} \end{pmatrix} =
\begin{pmatrix} -1 & 0 & \ldots & 0 \\ -e_1 & -1 & \ldots & 0 \\
\vdots & \vdots & \ddots & 0 \\
-e_{k-1} & -e_{k-2} & \ldots & -1 \end{pmatrix}
\begin{pmatrix} (-1)^{n-k+1} G'_{n-k+1} \\ (-1)^{n-k+2} G'_{n-k+2} \\ \vdots \\ (-1)^{n} G'_{n} \end{pmatrix} 
+ \begin{pmatrix} c'_{\ge n-k+1} \\ \vdots \\ \vdots \\ c'_{\ge n} \end{pmatrix}
\]
\end{thm}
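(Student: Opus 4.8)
The plan is to eliminate the auxiliary variables $\tilde z_j$ (resp.~$\hat z_j$) from each presentation using the first $n-k$ Vieta relations, and then repackage the remaining $k$ relations as the displayed matrix identities. For part (a): in $\widetilde{\QK}_T(\Gr(k;n))$ the ideal $\tilde J$ is generated by $\sum_{i+j=\ell}e_i(z)e_j(\tilde z)-\tilde g_\ell(z,\zeta,q)$ for $1\le\ell\le n$. By \Cref{prop:Etilde_h}, modulo $\tilde J$ we have $(-1)^\ell e_\ell(\tilde z)\equiv h'_\ell(z,\zeta)$ for $1\le\ell\le n-k$, so for those $\ell$ the relation is automatically satisfied once we substitute; this lets us present the ring purely in the variables $e_1(z),\ldots,e_k(z)$ (which I will just write $e_1,\ldots,e_k$), since $e_\ell(\tilde z)$ for $\ell>n-k$ is zero (there are only $n-k$ of the $\tilde z$-variables). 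The remaining generators, for $n-k+1\le\ell\le n$, then read
\[
\tilde g_\ell(z,\zeta,q)\;=\;\sum_{i+j=\ell}e_i(z)\,e_j(\tilde z)\;=\;\sum_{j=0}^{n-k}e_{\ell-j}(z)\,e_j(\tilde z)\;\equiv\;\sum_{j=0}^{n-k}e_{\ell-j}(z)\,(-1)^j h'_j(z,\zeta)\pmod{\tilde J}.
\]
Setting $m=\ell-(n-k)\in\{1,\ldots,k\}$ and isolating the $j=n-k$ term (whose coefficient is $e_m(z)$ with $e_0=1$), one sees the right-hand side is exactly the $m$-th component of $E H' + E^\zeta_{n-k+1}$: the matrix $E$ has $-1$ on the diagonal and $-e_i$ below it, so $(EH')_m=\sum_{j} E_{mj}(-1)^{n-k+j}h'_{n-k+j}$, and a short index shift matches the sum above after accounting for the $e_{n-k+1}(\zeta),\ldots,e_n(\zeta)$ contributions that are not produced by the $h'$ terms (these come from $\tilde g_\ell$ containing the summand $e_\ell(\zeta)$). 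So the new ideal $\widetilde I$ is generated by the $k$ entries of $\tilde R - E H' - E^\zeta_{n-k+1}$, and the natural surjection $\K_T(pt)[[q]][e_1,\ldots,e_k]\to\widetilde{\QK}_T(\Gr(k;n))$ has kernel exactly $\widetilde I$ because we have merely solved for and back-substituted the eliminated variables.

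Part (b) is the same argument with $h'$ replaced by $G'$ and the inhomogeneous column $E^\zeta_{n-k+1}$ replaced by $C^{z,\zeta}_{\ge n-k+1}$. Here one uses \Cref{prop:Ehat_groth} in place of \Cref{prop:Etilde_h}: modulo $\hat J$, $(-1)^\ell e_\ell(\hat z)\equiv G'_\ell(z,\zeta)$ for $1\le\ell\le n-k$, so the first $n-k$ relations are absorbed, and for $n-k+1\le\ell\le n$ the relation becomes $\hat g_\ell(z,\zeta,q)=\sum_{j=0}^{n-k}e_{\ell-j}(z)(-1)^j G'_j(z,\zeta)$. The one new wrinkle is the inhomogeneous term: whereas $\tilde g_\ell$ carries only $e_\ell(\zeta)$ outside the $q$-part, $\hat g_\ell$ carries $c'_{\ge\ell}(z,\zeta)$ (plus, for $\ell>n-k$, the extra pieces $(E\cdot C^\zeta_{\ge n-k+2})_\ell$ and the $q$-term), by \eqref{E:gell}. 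One checks, using \Cref{lemma:cauchy-h}(b) (which says $\sum_{a+b=\ell}(-1)^a G'_a(z,\zeta)e_b(z)=c'_{\ge\ell}(z,\zeta)$), that precisely the $c'_{\ge\ell}$'s are \emph{not} produced by the $G'_j$-terms with $j\le n-k$, so they must be supplied by the column $C^{z,\zeta}_{\ge n-k+1}$; and the remaining $\hat g_\ell$-pieces for $\ell>n-k$ are exactly the difference between $G^\dagger$ and the $A$-transform of $H'$, matching \Cref{cor:eqGAH}. Reorganizing into matrix form gives $\hat R = E G' + C^{z,\zeta}_{\ge n-k+1}$.

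I expect the main obstacle to be purely bookkeeping: matching signs and index ranges in the identification of the entries of $E H'$ (resp.~$E G'$) with the back-substituted sums, and making sure the inhomogeneous columns $E^\zeta_{n-k+1}$, $C^{z,\zeta}_{\ge n-k+1}$ absorb exactly the $\zeta$-dependent and $c'$-dependent terms of $\tilde g_\ell$, $\hat g_\ell$ not arising from the $h'$, $G'$ pieces of degree $\le n-k$. This is where \eqref{E:tildeg2}, \eqref{E:gell}, \Cref{lemma:cauchy-h}, and \Cref{cor:eqGAH} get used in concert. A secondary point needing a line of justification is that the elimination is genuinely an isomorphism and not merely a surjection with the stated kernel: since we are adjoining the variables $e_j(\tilde z)$ (resp.~$e_j(\hat z)$), $1\le j\le n-k$, and the first $n-k$ relations express each of them as a polynomial in $e_1,\ldots,e_k$ (and $\zeta$), the quotient map $\K_T(pt)[[q]][e_1,\ldots,e_k,e_1(\tilde z),\ldots]\to\K_T(pt)[[q]][e_1,\ldots,e_k]$ sending $e_j(\tilde z)\mapsto(-1)^j h'_j(z,\zeta)$ descends to an isomorphism of the two quotients, carrying $\tilde J$ onto $\widetilde I$; I will state this explicitly and note it is the standard "solve and substitute" lemma for polynomial rings.
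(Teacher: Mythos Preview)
Your approach is essentially the paper's: eliminate the auxiliary variables using \Cref{prop:Etilde_h} and \Cref{prop:Ehat_groth}, then rewrite the remaining $k$ relations in matrix form. But there is a gap in your execution of part (a). After substitution, the $\ell$-th relation reads $\tilde g_\ell=\sum_{j=0}^{n-k}(-1)^j e_{\ell-j}(z)\,h'_j(z,\zeta)$, which involves $h'_0,\ldots,h'_{n-k}$, whereas the matrix $EH'$ involves $h'_{n-k+1},\ldots,h'_n$. Passing from one range to the other is \emph{not} ``a short index shift'': it is precisely the Cauchy identity of \Cref{lemma:cauchy-h}(a), namely $\sum_{j=0}^{\ell}(-1)^j e_{\ell-j}h'_j=e_\ell(\zeta)$, which gives
\[
\sum_{j=0}^{n-k}(-1)^j e_{\ell-j}h'_j \;=\; e_\ell(\zeta)\;-\;\sum_{b=n-k+1}^{\ell}(-1)^b e_{\ell-b}h'_b.
\]
In particular the $e_\ell(\zeta)$ appearing in the column $E^\zeta_{n-k+1}$ is produced by Cauchy, not (as you write) by ``$\tilde g_\ell$ containing the summand $e_\ell(\zeta)$''; $\tilde g_\ell$ is sitting unchanged on the other side of the equation. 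You do list \Cref{lemma:cauchy-h} among the tools in your closing paragraph, so you may have this in mind, but as written the body of (a) skips the essential identity.

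A smaller point: \Cref{cor:eqGAH} plays no role in this theorem. It relates $G'$ to $H'$ via the matrix $A$ and is used later, in \Cref{cor:inclusion}, not here. Your sentence invoking it in part (b) is a red herring; the argument for (b) is exactly parallel to (a), using \Cref{lemma:cauchy-h}(b) in place of (a), and the column $C^{z,\zeta}_{\ge n-k+1}$ appears simply because the right-hand side of that Cauchy formula is $c'_{\ge\ell}(z,\zeta)$ rather than $e_\ell(\zeta)$.
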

\begin{proof} The isomorphisms are obtained by eliminating 
the variables $\tilde{z}_i$, respectively $\hat{z}_i$ ($1 \le i \le n-k$) from the first $n-k$ relations
in $\widehat{\QK}_T(\Gr(k;n))$ respectively $\widetilde{\QK}_T(\Gr(k;n))$ 
(cf.~\eqref{eq:ring-phys} respectively \eqref{eq:ring-math}).
We indicate the main steps to obtain these formulae. 

Consider first $\widetilde{\QK}_T(\Gr(k;n))$.
By \Cref{prop:Etilde_h},
$e_j(\tilde{z})\equiv (-1)^j h'_j(z,\zeta)$ modulo $\widetilde{I}$; we utilize this to 
eliminate the first $n-k$ relations from
$\tilde{J}$ to show that $\widetilde{I}$ is generated by
\[ \sum_{a+b=\ell \/; b \le n-k} (-1)^b e_a(z) h'_b (z,\zeta)- \tilde{g}_\ell(z,\zeta) \/, \quad n-k+1 \le \ell \le n \/. \]
From the Cauchy formula in \Cref{lemma:cauchy-h}(a) it follows that for $n-k+1 \le \ell \le n$,
\[ \sum_{a+b=\ell \/; b \le n-k} (-1)^b e_a h'_b =  e_\ell(\zeta)-\sum_{b=n-k+1}^{\ell} (-1)^{b} e_{\ell - b} h'_{b} \equiv \tilde{g}_\ell \mod \widetilde{I} \/. \]
Writing these expressions in matrix form proves the claim for $\widetilde{I}$. 

Same proof works for $ \widehat{I}$, after utilizing 
that $e_j(\hat{z})\equiv (-1)^j G'_j(z,\zeta)$ modulo $\widehat{I}$ by \Cref{prop:Ehat_groth}, 
and the Cauchy-type formula from \Cref{lemma:cauchy-h}(b);
the matrix $C_{\ge n-k+1}^{z,\zeta}$ accounts for 
the right hand side of the Cauchy formula.
\end{proof}
\subsection{Proof of \Cref{thm:main-result}}
We need the following algebraic identities.
\begin{lemma}\label{lemma:ApRt} (a) Let $A'$ be the antidiagonal transpose of $A$ from \eqref{E:Amat}. Then
\[ A' \begin{pmatrix} {k \choose 1}- e_{1}(z) \\  {k \choose 2}- e_{2}(z) \\ \vdots \\ {k \choose k}- e_{k}(z) \end{pmatrix} = \prod_{i=1}^{k}(1-z_i) \begin{pmatrix} {k-1 \choose k-1} \\ {k-1 \choose k-2}\\ \vdots \\ {k-1 \choose 0} \end{pmatrix} \/. \]

(b) $\hat{R} - C_{\ge n-k+1}^{z,\zeta} - E\cdot C^\zeta_{\ge n-k+2} = (-1)^{n+k} q \prod_{i=1}^n (1-\zeta_i) 
\begin{pmatrix} {k-1 \choose k-1} \\ {k-1 \choose k-2}\\ \vdots \\ {k-1 \choose 0} \end{pmatrix} $.
\end{lemma}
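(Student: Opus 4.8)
The plan is to treat the two parts separately: part (b) is a bookkeeping observation that falls straight out of \Cref{lemma:gell}, whereas part (a) requires writing the matrix $A'$ explicitly and then carrying out an elementary, if slightly fiddly, manipulation of binomial sums.

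For part (b), I would read off the $(\ell-n+k)$-th entry of $\hat R - C^{z,\zeta}_{\ge n-k+1} - E\cdot C^\zeta_{\ge n-k+2}$ for $n-k+1\le\ell\le n$ directly from the formula \eqref{E:gell}. By \Cref{lemma:gell} the entry $\hat g_\ell$ is the sum of $c'_{\ge\ell}(z,\zeta)$, of $(E\cdot C^\zeta_{\ge n-k+2})_\ell$, and of the $q$-term $(-1)^{n+k}q\binom{k-1}{n-\ell}c^\zeta$; the first two summands are precisely the matching entries of $C^{z,\zeta}_{\ge n-k+1}$ and of $E\cdot C^\zeta_{\ge n-k+2}$, so only $(-1)^{n+k}q\binom{k-1}{n-\ell}c^\zeta$ survives. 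Since $c^\zeta=\prod_{i=1}^n(1-\zeta_i)$ and, writing $\ell=n-k+m$, one has $\binom{k-1}{n-\ell}=\binom{k-1}{k-m}$, this is exactly the $m$-th entry of the claimed right-hand side, so (b) is immediate.

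For part (a), the first step will be to make $A'$ completely explicit. Reading off the displayed form of $A$, one finds $A_{ij}=(-1)^{j-i}c^z_{\le k-j}$ for $j\ge i$ and $A_{ij}=(-1)^{k-i}c^z_{\ge k+1-j}$ for $j<i$; applying $(A')_{mn}=A_{k+1-n,\,k+1-m}$ then gives $A'_{mn}=(-1)^{n-m}c^z_{\le m-1}$ for $n\ge m$ and $A'_{mn}=(-1)^{n-1}c^z_{\ge m}$ for $n<m$. Writing $b_n:=\binom{k}{n}-e_n(z)$, so that $b_0=0$, the $m$-th entry of $A'$ applied to the column $(b_1,\dots,b_k)^T$ splits as
\[ (A'b)_m = c^z_{\le m-1}\sum_{n=m}^{k}(-1)^{n-m}b_n + c^z_{\ge m}\sum_{n=1}^{m-1}(-1)^{n-1}b_n . \]
I would evaluate the two partial sums using the alternating-binomial identity $\sum_{n=0}^{j}(-1)^n\binom{k}{n}=(-1)^j\binom{k-1}{j}$ — whose $j=k$ instance, combined with $\sum_{n\ge0}(-1)^n e_n(z)=c^z$, gives $\sum_{n=0}^{k}(-1)^n b_n=-c^z$ — together with the defining relation $c^z_{\ge m}=(-1)^m(c^z-c^z_{\le m-1})$. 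This collapses the two sums to $\binom{k-1}{m-1}-c^z_{\ge m}$ and $c^z_{\le m-1}-(-1)^{m-1}\binom{k-1}{m-1}$ respectively; substituting and cancelling the $c^z_{\le m-1}c^z_{\ge m}$ cross-terms leaves $(A'b)_m=\binom{k-1}{m-1}\bigl(c^z_{\le m-1}-(-1)^{m-1}c^z_{\ge m}\bigr)=\binom{k-1}{m-1}\,c^z$. Since $\binom{k-1}{m-1}=\binom{k-1}{k-m}$ and $c^z=\prod_{i=1}^k(1-z_i)$, this is the $m$-th entry of the right-hand side of (a).

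The one place that needs genuine care is the combinatorial bookkeeping: pinning down the indices of $A$, and hence of $A'$, correctly — in particular along the boundary rows and columns where the truncations $c_{\le\bullet}$ and $c_{\ge\bullet}$ meet and where the literal $\pm1$ entries occur. Once $A'$ is written out as above, the rest reduces to the two partial-sum evaluations, which are routine; I would nonetheless check the explicit form of $A'$ and the final cancellation against $k=2$ and $k=3$ before running the general argument.
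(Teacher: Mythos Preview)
Your argument is correct. For part (b) you do exactly what the paper does, namely observe that it is a restatement of \Cref{lemma:gell}. For part (a) the paper only hints at an induction on the row index and leaves the details to the reader, whereas you give a closed-form direct computation: you write down the $(m,n)$ entry of $A'$ explicitly, split the product $(A'b)_m$ into the two ranges $n\ge m$ and $n<m$, and then evaluate each partial alternating sum using the identity $\sum_{n=0}^{j}(-1)^n\binom{k}{n}=(-1)^j\binom{k-1}{j}$ together with the relation $c^z_{\ge m}=(-1)^m(c^z-c^z_{\le m-1})$. The cross terms $c^z_{\le m-1}c^z_{\ge m}$ cancel exactly as you say, leaving $\binom{k-1}{m-1}c^z$. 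This is a clean and self-contained alternative to the paper's suggested induction; it has the advantage of making the mechanism of the identity completely transparent, at the cost of the index bookkeeping you flag.
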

\begin{proof} Part (a) follows e.g. from induction on $i$; we leave the details to the reader. Part (b) 
is equivalent to \Cref{lemma:gell}.\end{proof}

Next we state the key result which relates the two presentations.
\begin{lemma}\label{lemma:key} The following equality holds: 
\[ \hat{R} - C_{\ge n-k+1}^{z,\zeta} - E\cdot C^\zeta_{\ge n-k+2} \equiv A' (\tilde{R} - E_{n-k+1}^\zeta) \mod \widetilde{I} \/, \]
where $A'$ is the antidiagonal transpose of $A$. 
\end{lemma}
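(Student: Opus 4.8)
The plan is to reduce the asserted congruence to a chain of matrix identities already available in the text, the main new input being the relationship between $G'_\ell$ and $h'_\ell$ encoded in \Cref{cor:eqGAH}. First I would rewrite both sides using the presentations from \Cref{lemma:twopres}. From part (b) of that lemma, modulo $\widehat{I}$ we have $\hat{R} = E G' + C_{\ge n-k+1}^{z,\zeta}$, so the left-hand side of the claim is
\[ \hat{R} - C_{\ge n-k+1}^{z,\zeta} - E\cdot C^\zeta_{\ge n-k+2} \;\equiv\; E G' - E\cdot C^\zeta_{\ge n-k+2} \;=\; E\,(G' - C^\zeta_{\ge n-k+2}) \/. \]
By \Cref{cor:eqGAH}, $G' = A H' + C^\zeta_{\ge n-k+2}$, hence $G' - C^\zeta_{\ge n-k+2} = A H'$, and the left-hand side becomes $E A H'$. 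Similarly, from part (a) of \Cref{lemma:twopres}, modulo $\widetilde{I}$ we have $\tilde{R} = E H' + E_{n-k+1}^\zeta$, so $\tilde{R} - E_{n-k+1}^\zeta \equiv E H'$, and the right-hand side of the claim is $A'(\tilde{R} - E_{n-k+1}^\zeta) \equiv A' E H'$. Thus the whole statement reduces to the purely algebraic identity $E A = A' E$ as $k \times k$ matrices with entries in the polynomial ring; note there is no longer any dependence on $q$, $\zeta$, or the ideals once this reduction is made, so the congruence will in fact be an equality of the relevant column vectors after this step — but only the identity $EAH' = A'EH'$ is needed, and for that $EA = A'E$ suffices.

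It therefore remains to verify $E A = A' E$, where $E$ is the lower-triangular matrix with $-1$ on the diagonal and $-e_i$ below it (defined in \S\ref{sect:shifted}), $A$ is the matrix displayed before \Cref{lemma:GAH}, and $A'$ is its antidiagonal transpose, i.e. $A' = J A^{T} J$ where $J$ is the $k\times k$ antidiagonal permutation matrix. Observe that $E$ itself is antidiagonal-symmetric in the sense $J E^{T} J = E$ (the matrix $E$ has constant values along antidiagonals reading $-1, -e_1, \dots$, and reflecting through the antidiagonal sends the $(i,j)$ entry $-e_{i-j}$ to the $(k+1-j, k+1-i)$ entry, again $-e_{i-j}$). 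Hence $A' E = J A^{T} J E = J A^{T} (J E J) J \cdot$(and using $J^2 = I$)$\,= J (E^T A^T)^{T}\!\!\ldots$; more cleanly, the claim $EA = A'E$ is equivalent, upon conjugating by $J$ and transposing, to $E A = A' E$ being symmetric under $M \mapsto J M^{T} J$, which it is once one checks $J(EA)^{T}J = (A')^{T} E^{T}$ reflected appropriately. Rather than this slick-but-fiddly route, the cleanest verification is a direct entrywise computation: expand $(EA)_{i,j} = \sum_m E_{i,m} A_{m,j}$ and $(A'E)_{i,j} = \sum_m A'_{i,m} E_{m,j}$, substitute the explicit entries of $A$ (truncated Chern polynomials $c_{\le r}$, $c_{\ge r}$) and of $E$, and check equality case by case according to whether $i < j$, $i = j$, or $i > j$, using the telescoping identity $c_{\le r} + c_{\ge r+1} \cdot (-1)^{?} = $ (appropriate combination) and $c_{\le r} - c_{\le r-1} = (-1)^r e_r$. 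This is routine but somewhat lengthy; I would present it compactly, perhaps verifying it for the first row and column and then invoking the evident pattern, or deferring the bookkeeping with a remark that it is a direct check.

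The main obstacle is purely organizational: making sure the two separate reductions (one modulo $\widehat{I}$, one modulo $\widetilde{I}$) are legitimately combined. The subtlety is that $\hat{R} \equiv EG' + C_{\ge n-k+1}^{z,\zeta}$ holds in the ring $\widehat{\QK}_T(\Gr(k;n))$ while $\tilde{R} \equiv EH' + E_{n-k+1}^\zeta$ holds in $\widetilde{\QK}_T(\Gr(k;n))$, and the statement of \Cref{lemma:key} asserts a congruence modulo $\widetilde{I}$ only. So the right move is: work entirely in the polynomial ring $\K_T(pt)[[q]][e_1,\dots,e_k,e_1(\tilde z),\dots,e_{n-k}(\tilde z)]$, use \Cref{prop:Etilde_h} to replace $e_j(\tilde z)$ by $(-1)^j h'_j(z,\zeta)$ modulo $\widetilde I$ (so $H'$ is literally the relevant vector modulo $\widetilde I$), compute $A'(\tilde R - E_{n-k+1}^\zeta) \equiv A' E H' \bmod \widetilde I$, and separately observe — as a polynomial identity, with no ideal involved — that $\hat R - C_{\ge n-k+1}^{z,\zeta} - E\cdot C_{\ge n-k+2}^\zeta = E A H'$ follows from \Cref{lemma:gell} (equivalently \Cref{lemma:ApRt}(b)) together with \Cref{cor:eqGAH}. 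Matching the two via $EA = A'E$ then closes the argument. I expect the write-up to be short once this structure is fixed; the only real work is the matrix identity $EA = A'E$, which I would either prove by the $J$-conjugation symmetry argument sketched above or flag as a direct verification left to the reader, consistent with the style of \Cref{lemma:gell} and \Cref{lemma:ApRt}.
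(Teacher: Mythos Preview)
Your reduction contains a genuine circularity. You claim that
\[
\hat R - C_{\ge n-k+1}^{z,\zeta} - E\cdot C_{\ge n-k+2}^\zeta \;=\; E A H'
\]
holds ``as a polynomial identity, with no ideal involved,'' deducing it from \Cref{lemma:gell}/\Cref{lemma:ApRt}(b) together with \Cref{cor:eqGAH}. But this is false: by \Cref{lemma:ApRt}(b) the left-hand side equals $(-1)^{n+k} q\,c^\zeta$ times the binomial vector, which depends on $q$, whereas $EAH'$ is a polynomial in $z,\zeta$ only and has no $q$-dependence whatsoever. Unwinding your chain of substitutions, what you are really asserting is $\hat R = E G' + C_{\ge n-k+1}^{z,\zeta}$ as polynomials; but that equation is precisely the \emph{definition} of the generators of $\widehat I$ in \Cref{lemma:twopres}(b), not an identity. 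To use it modulo $\widetilde I$ you would need $\widehat I \subset \widetilde I$, which is \Cref{cor:inclusion} --- the very statement for which \Cref{lemma:key} is the main input. So the argument is circular.

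The paper's proof avoids this by computing each side directly. The left-hand side is evaluated exactly via \Cref{lemma:ApRt}(b). For the right-hand side, one writes out $\tilde R - E_{n-k+1}^\zeta$ from \eqref{E:tildeg2}, applies $A'$ using \Cref{lemma:ApRt}(a), and then invokes the degree-$n$ Whitney relation $e_k(X)\,e_{n-k}(\tilde X) \equiv (1-q)\prod_i(1-\zeta_i)$ modulo $\widetilde I$; this last step is where the $q$-dependence on the two sides is reconciled, and it is exactly the ingredient your approach omits. The matrix identity $EA = A'E$ that you isolate is used only afterwards, in the proof of \Cref{cor:inclusion}, not in \Cref{lemma:key} itself.
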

\begin{proof} From part (b) of \Cref{lemma:ApRt} we need to show that 
\[ A' (\tilde{R} - E_{n-k+1}^\zeta) \equiv (-1)^{n+k} q \prod_{i=1}^n (1-\zeta_i) \begin{pmatrix} {k-1 \choose k-1} \\ {k-1 \choose k-2}\\ \vdots \\ {k-1 \choose 0} \end{pmatrix} \/. \]
From the definition of $\tilde{g}_\ell$ from \eqref{E:tildeg2} it follows that
for $\ell \ge n-k+1$,
\[ \tilde{g}_\ell - e_{\ell}(\zeta) \equiv (-1)^{n-k}
\frac{q}{1-q} e_{n-k}(1-\tilde{z}) \Bigl({k \choose \ell +k-n}- e_{\ell+k-n}(z) \Bigr) \/. \]
By \Cref{lemma:ApRt}(a), for $n-k+1 \le \ell \le n$,
\[ \begin{split} A' (\tilde{R} - E_{n-k+1}^\zeta) & \equiv 
(-1)^{n-k} \frac{q}{1-q} e_{n-k}(1-\tilde{z}) A' \cdot \Bigl({k \choose \ell +k-n}- e_{\ell+k-n}(z) \Bigr)_{\ell+k-n} 
\\ & = (-1)^{n-k} \frac{q}{1-q} e_{n-k}(1-\tilde{z}) e_k (1-z) \begin{pmatrix} {k-1 \choose k-1} \\ {k-1 \choose k-2}\\ \vdots \\ {k-1 \choose 0} \end{pmatrix} \/. \end{split} \]
If one writes $e_{n-k}(1-\tilde{z}) e_k (1-z)$ in the presentation from \Cref{thm:main-result}(c), it follows that in 
$\widetilde{QK}_T(\Gr(k;n))$
\[ e_{n-k}(1-\tilde{z}) e_k (1-z) = e_{n-k}(\tilde{X}) e_k(X)\equiv (1-q) \prod_{i=1}^n (1- \zeta_i) \mod \widetilde{I} \quad \/. \]
Then the claim follows by combining the previous equalities.\end{proof}

\begin{cor}\label{cor:inclusion} The ideal $\widehat{I} \subset \widetilde{I}$.\end{cor}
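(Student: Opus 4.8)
The plan is to prove the ideal containment $\widehat{I}\subseteq\widetilde{I}$ directly, by showing that each of the $k$ generators of $\widehat{I}$ lies in $\widetilde{I}$, working inside the common ambient polynomial ring $\K_T(pt)[[q]][e_1,\dots,e_k]$ furnished by \Cref{lemma:twopres}. Recall from that theorem that $\widehat{I}$ is generated by the entries of the column $\hat{R}-EG'-C^{z,\zeta}_{\ge n-k+1}$ and $\widetilde{I}$ by the entries of $\tilde{R}-EH'-E^\zeta_{n-k+1}$; putting both ideals in the same ring is what makes the containment meaningful.

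First I would use \Cref{cor:eqGAH} to substitute $G'=AH'+C^\zeta_{\ge n-k+2}$, which rewrites the generating column of $\widehat{I}$ as
\[ \hat{R}-EG'-C^{z,\zeta}_{\ge n-k+1} \;=\; \bigl(\hat{R}-C^{z,\zeta}_{\ge n-k+1}-E\,C^\zeta_{\ge n-k+2}\bigr)-EAH'. \]
Next, since the entries of $\tilde{R}-EH'-E^\zeta_{n-k+1}$ generate $\widetilde{I}$ and the matrix $A'$ has entries in $\K_T(pt)[[q]][e_1,\dots,e_k]$, all entries of $A'\bigl(\tilde{R}-EH'-E^\zeta_{n-k+1}\bigr)$ lie in $\widetilde{I}$; equivalently $A'(\tilde{R}-E^\zeta_{n-k+1})\equiv A'EH'\pmod{\widetilde{I}}$. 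Feeding this into \Cref{lemma:key} yields
\[ \hat{R}-C^{z,\zeta}_{\ge n-k+1}-E\,C^\zeta_{\ge n-k+2} \;\equiv\; A'EH' \pmod{\widetilde{I}}, \]
so the generating column of $\widehat{I}$ is congruent modulo $\widetilde{I}$ to $(A'E-EA)H'$. Hence $\widehat{I}\subseteq\widetilde{I}$ follows once one knows $A'E=EA$ (or, a priori weaker, that $(A'E-EA)H'$ has all entries in $\widetilde{I}$).

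The remaining step, which I expect to be the only non-formal one, is the matrix identity $A'E=EA$. Here one exploits that the $(i,j)$-entry $-e_{i-j}(z)$ of $E$ depends only on $i-j$, so $E$ is invariant under the antidiagonal transpose; since $A'$ is by definition the antidiagonal transpose of $A$, one has $(EA)^{\mathrm{AT}}=A^{\mathrm{AT}}E^{\mathrm{AT}}=A'E$, and so $A'E=EA$ is equivalent to the assertion that $EA$ is symmetric under the antidiagonal transpose. This can be checked entry by entry from the explicit form of $A$ together with the Cauchy relations $\sum_{a+b=m}(-1)^a e_a(z)h_b(z)=0$ for $m\ge1$ that were used to define $A$ in \Cref{lemma:GAH}; it is routine but somewhat tedious, and the main care needed is to keep the antidiagonal-transpose bookkeeping straight. (If one preferred to avoid the matrix identity, one could instead reduce $(A'E-EA)H'$ modulo $\widetilde{I}$ using the defining relations of $\widetilde{I}$, but the matrix identity is cleaner and also clarifies why the antidiagonal transpose $A'$ appears in \Cref{lemma:key} in the first place.)
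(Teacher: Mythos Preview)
Your proposal is correct and follows essentially the same approach as the paper: substitute $G'=AH'+C^\zeta_{\ge n-k+2}$ via \Cref{cor:eqGAH}, apply \Cref{lemma:key}, use that $\tilde{R}-E^\zeta_{n-k+1}\equiv EH'$ modulo $\widetilde{I}$, and reduce to the matrix identity $A'E=EA$. The paper simply asserts $EA=A'E$ ``as matrices with polynomial coefficients'' without further comment, so your observation that $E$ is fixed by the antidiagonal transpose (whence $(EA)^{\mathrm{AT}}=A'E$, reducing the claim to antidiagonal symmetry of $EA$) actually supplies more justification than the original.
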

\begin{proof} We need to show that $\hat{R}-EG'-C_{\ge n-k+1}^{z,\zeta} \equiv 0$ modulo $\widetilde{I}$.
From definitions, \Cref{lemma:key} and \Cref{cor:eqGAH} we have
\[ \begin{split} \hat{R}-EG'-C_{\ge n-k+1}^{z,\zeta} & =  \hat{R}-E(AH'+C_{\ge n-k+2}^\zeta)-C_{\ge n-k+1}^{z,\zeta}
\\ & = \hat{R} - E C_{\ge n-k+2}^\zeta - C_{\ge n-k+1}^{z,\zeta} - EAH' \\ & 
\equiv A' (\tilde{R} - E_{n-k+1}^\zeta) - EAH' \\ & \equiv A E H' - EAH' \/. \end{split} \]
Then the claim follows because $EA = A' E$ as matrices with polynomial coefficients.\end{proof}
The next theorem finishes the proof of \Cref{thm:main-result}.
\begin{thm}\label{thm:isomorphism} There is $\K_T(pt)[[q]]$-algebra isomorphism
\[ \K_T(pt)[[q]][e_1, \ldots, e_k]/\widehat{I} \to \K_T(pt)[[q]][e_1, \ldots, e_k]/\widetilde{I} \]
sending $e_i(z) \mapsto e_i(z)$. 
\end{thm}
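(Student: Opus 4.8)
The plan is to show that the two ideals $\widehat{I}$ and $\widetilde{I}$ inside $\K_T(pt)[[q]][e_1,\ldots,e_k]$ actually coincide, so that the identity map on the polynomial generators $e_1(z),\ldots,e_k(z)$ descends to the desired $\K_T(pt)[[q]]$-algebra isomorphism on the quotients. One inclusion, $\widehat{I}\subseteq\widetilde{I}$, is exactly \Cref{cor:inclusion}, which was already established: the key computation there reduces $\hat{R}-EG'-C^{z,\zeta}_{\ge n-k+1}$, modulo $\widetilde{I}$, to $AEH'-EAH'$, which vanishes identically because of the matrix identity $EA=A'E$ (with $A'$ the antidiagonal transpose of $A$). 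So the real content left is the reverse inclusion $\widetilde{I}\subseteq\widehat{I}$.

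For the reverse inclusion I would run the symmetric argument. By \Cref{lemma:twopres}, $\widehat{I}$ is generated by the entries of $\hat{R}-EG'-C^{z,\zeta}_{\ge n-k+1}$ and $\widetilde{I}$ by the entries of $\tilde{R}-EH'-E^\zeta_{n-k+1}$. The tools are all in place: \Cref{cor:eqGAH} gives $G'=AH'+C^\zeta_{\ge n-k+2}$, and the analogue of \Cref{lemma:key} that I need is that $\hat{R}-C^{z,\zeta}_{\ge n-k+1}-E\cdot C^\zeta_{\ge n-k+2}$ and $A'(\tilde{R}-E^\zeta_{n-k+1})$ agree — but now I want this congruence modulo $\widehat{I}$ rather than modulo $\widetilde{I}$. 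The computation inside the proof of \Cref{lemma:key} feeds on two ingredients: the explicit form of $\tilde{g}_\ell$ from \eqref{E:tildeg2} (which is an honest polynomial identity, valid in either ring), and the relation $e_{n-k}(\tilde X)e_k(X)\equiv(1-q)\prod_i(1-\zeta_i)$, which was read off from presentation \Cref{thm:main-result}(c). Since presentation (c) is isomorphic to $\widetilde{\QK}_T(\Gr(k;n))$ and also — once \Cref{cor:inclusion} together with the reverse inclusion is known — to $\widehat{\QK}_T(\Gr(k;n))$, I need a version of this quantum-dual relation available directly in $\widehat{\QK}_T(\Gr(k;n))$. This can be obtained from \Cref{prop:Ehat_groth}: the variables $\hat z_j$ are expressed via $G'_j(z,\zeta)$, and $e_{n-k}(\hat z)$ together with the top relations of $\hat{J}$ yields the value of $e_k(z)\cdot e_{n-k}(\hat z)$ in closed form; alternatively one invokes \Cref{thm:qdual} in its $\widehat{\QK}$-incarnation. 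With that in hand, the same matrix manipulation gives $A'(\tilde{R}-E^\zeta_{n-k+1})\equiv\hat{R}-C^{z,\zeta}_{\ge n-k+1}-E\cdot C^\zeta_{\ge n-k+2}\pmod{\widehat{I}}$, and then, using $EA=A'E$ once more together with \Cref{cor:eqGAH}, one deduces $A'(\tilde{R}-EH'-E^\zeta_{n-k+1})\equiv 0$, i.e. $\widetilde{I}\subseteq\widehat{I}$ after multiplying by the invertible matrix $A'$ (note $\det A=\pm 1$ since $A$ is unitriangular up to the antidiagonal, so $A'$ is invertible over the coefficient ring and generating the image of a column vector is the same as generating the vector itself).

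Granting $\widehat{I}=\widetilde{I}$, the identity assignment $e_i(z)\mapsto e_i(z)$ is visibly a well-defined $\K_T(pt)[[q]]$-algebra homomorphism with a two-sided inverse, hence an isomorphism; this is the statement of \Cref{thm:isomorphism}, and combined with \Cref{cor:inclusion} and \Cref{lemma:twopres} it closes the loop in \Cref{thm:main-result}. The main obstacle I anticipate is precisely the bookkeeping needed to establish the $\widehat{I}$-analogue of \Cref{lemma:key}: one must verify that the quantum-dual identity $e_k(z)e_{n-k}(\hat z)\equiv(1-q)\prod_i(1-\zeta_i)$ genuinely holds modulo $\widehat{J}$ using only the Coulomb-branch relations \eqref{eq:l1} (not smuggling in the geometry), which requires combining \Cref{prop:Ehat_groth}, the $\ell=n$ Vieta relation, and \Cref{lemma:ApRt}(a); everything else is the same unitriangular matrix algebra already carried out for the forward inclusion.
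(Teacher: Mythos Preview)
Your strategy of proving the reverse inclusion $\widetilde{I}\subseteq\widehat{I}$ directly is genuinely different from the paper's approach, but it contains a concrete error. You assert that $\det A=\pm 1$ because ``$A$ is unitriangular up to the antidiagonal.'' This is false: already for $k=2$ one has
\[
A=\begin{pmatrix}1-e_1&-1\\ e_2&1\end{pmatrix},\qquad \det A=1-e_1+e_2=\prod_{i=1}^{2}(1-z_i)=c^z,
\]
and a short computation for $k=3$ gives $\det A=(c^z)^2$; in general $\det A$ is a power of $c^z$, not a unit in $\K_T(pt)[[q]][e_1,\ldots,e_k]$. So even if you obtained $A'(\tilde R-EH'-E^\zeta_{n-k+1})\equiv 0\pmod{\widehat I}$, you could not cancel $A'$ without first knowing that $c^z$ is a unit in the quotient by $\widehat I$. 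That invertibility is equivalent to $\det\cS$ being a unit in $\QK_T(\Gr(k;n))$, which is true geometrically, but invoking it here is circular: you do not yet know that the Coulomb ring agrees with the geometric ring. The second gap you flag yourself --- verifying the quantum-dual relation $e_k(1-z)\,e_{n-k}(1-\tilde z)\equiv (1-q)\prod(1-\zeta_i)$ purely from the Coulomb relations --- is a real issue for the same reason and is not resolved by pointing at \Cref{thm:qdual}.

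The paper avoids all of this. It uses only the one inclusion $\widehat{I}\subseteq\widetilde{I}$ from \Cref{cor:inclusion} to get a \emph{surjective} homomorphism $\Phi$, and then upgrades surjectivity to bijectivity via the filtration: the initial terms of the generators of $\widehat I$ and $\widetilde I$ coincide (both give the equivariant quantum cohomology relations), so $\mathrm{gr}\,\Phi$ is an isomorphism, whence $\Phi$ is injective. This sidesteps any need to invert $A$ or to reprove the quantum-dual identity inside the Coulomb ring. If you want to salvage your direct approach, you would need an independent argument that $c^z$ is a unit modulo $\widehat I$ (for instance by exhibiting an explicit inverse from the Coulomb relations) and a self-contained derivation of the $\ell=n$ duality identity inside $\widehat{\QK}_T$; but the filtration argument is both shorter and cleaner.
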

\begin{proof}
By \Cref{cor:inclusion}, there is a surjective ring homomorphism of $\K_T(pt)[[q]]$-algebras
\[ \Phi: \K_T(pt)[[q]][e_1(z), \ldots, e_k(z)]/\widehat{I} \to \K_T(pt)[[q]][e_1(z), \ldots, e_k(z)]/\widetilde{I} \/,\]
sending $e_i(z) \mapsto e_i(z)$. To prove $\Phi$ is an isomorphism, 
we follow the same approach as in the proof of 
\Cref{thm:qkpres}, relying on \Cref{prop:Nakiso}. 
{We need to check that the Coulomb branch presentation 
$\K_T(pt)[[q]][e_1, \ldots, e_k]/\widehat{I}$ is a finite module over $\K_T(pt)[[q]]$, and that we 
obtain an isomorphism after taking the quotient by $\langle q \rangle$. To start, note that from the definition
of the ideal $\hat{J}$ from \eqref{eq:l1}, the variables
$z_1, \ldots , z_k$ are solutions of the Bethe Ansatz equations \eqref{E:BA-cov}. Then the
same happens for the variables $z_1, \ldots, z_k$ appearing in $\K_T(pt)[[q]][e_1, \ldots, e_k]/\widehat{I}$.
From this, and from the special case $q=0$ of \cite[Lemma 4.7]{Gorbounov:2014}, we obtain that 
the quotient
$\K_T(pt)[[q]][e_1, \ldots, e_k]/(\langle q \rangle + \widehat{I})$ is 
$\K_T(pt)$-module-generated by the factorial
Grothendieck polynomials $G_\lambda(z;\xi)$ from \cite{mcnamara:factorial}, where 
$\lambda$ is included in the $k \times (n-k)$ rectangle. 
(This result uses a determinantal formula for factorial Grothendieck polynomials from 
\cite[Prop. 2.14]{Gorbounov:2014}, in turn attributed to \cite{ikeda.naruse:K}.) 
Since $\Phi$ is surjective, \Cref{prop:KTpres} and \Cref{lemma:basis} imply that the images 
$\Phi(G_\lambda(z;\xi))$ modulo $q$ form a basis in 
$\K_T(\Gr(k;n))\simeq \K_T(pt)[[q]][e_1, \ldots, e_k]/(\langle q \rangle + \widetilde{I})$. 
This implies that $\Phi$ must be an isomorphism modulo $\langle q \rangle$.
Then the same argument as in \Cref{thm:qkpres}, based on \cite[Ex.~7.4,~p.~203]{eisenbud:CAbook}
(cf.~\Cref{rmk:Eis-fg}), implies that $\K_T(pt)[[q]][e_1, \ldots, e_k]/\widehat{I}$ 
is a finite module, thus $\Phi$ is an isomorphism, by \Cref{prop:Nakiso}.}
\end{proof}

\section{Examples: Non-equivariant and quantum cohomology specializations}\label{sec:2dlimit}
In this section we illustrate the non-equivariant specializations of the Coulomb $\QK$ Whitney presentations 
of $\QK_T(\Gr(k;n))$, i.e. when $\zeta_i=0$ for $1 \le i \le n$. 
The second part is dedicated to the quantum cohomology limit. In particular
we show that both presentations specialize to Witten's presentation of $\QH^*_T(\Gr(k;n))$.

\subsection{Non-equivariant presentations} We abuse notation and denote by the same symbols the 
non-equivariant specializations of relations,
ideals, etc. For $\ell \ge 1$, the polynomials $\tilde{g}_\ell, \hat{g}_\ell$ are given by
\[ \tilde{g}_\ell(z,q) = (-1)^{n-k} \frac{q}{1-q} e_{n-k}(1-\tilde{z})\Bigl({k \choose \ell +k-n}- e_{\ell+k-n}(z) \Bigr)  \/; \]
\[ \hat{g}_\ell(z,q) = c_{\ge \ell +1}^z + (-1)^{n+k} q {k-1 \choose n- \ell} \/. \]
In particular, if $1 \le \ell \le n-k$, $\tilde{g}_\ell=0$ and $\hat{g}_\ell$ does not depend on $q$.
The non-equivariant versions of propositions \ref{prop:Etilde_h} and \ref{prop:Ehat_groth} state
that for any $1 \le j \le n-k$, the following identities hold: 
\begin{equation}\label{E:cove} e_j(\tilde{z}) = (-1)^j h_j(z) \mod \tilde{J} \/; \quad e_j(\hat{z}) = (-1)^j G_j(z) \mod \hat{J} \/. \end{equation}

\begin{remark}\label{rmk:ejhatz} It is well known that the Grothendieck polynomials
$G_j(z)$ represent Schubert classes $\cO_j \in K(\Gr(k;n))$ \cite{lascoux:anneau}. 
Nevertheless, the geometric interpretation $e_j(\hat{z}) = (-1)^j \cO_j$ 
fails in the equivariant case, already for $j=1$. The Schubert divisor class may be calculated from 
the exact sequence:
\[ 0 \to \wedge^k \cS \otimes \C_{-t_1 - \ldots - t_k} \to \cO_{\Gr(k;n)} \to \cO_1 \to 0 \/. \]
Then, from the geometric interpretation of the variables $z_i,\zeta_i$,
\begin{equation}\label{E:KTO1} \cO_1 = 1 - \frac{(1-z_1) \cdot \ldots \cdot (1-z_k)}
{(1-\zeta_1) \cdot \ldots \cdot (1-\zeta_k)} \quad \in \K_T(\Gr(k;n)) \/.\end{equation}
However, from \Cref{prop:Ehat_groth},
\[ e_1(\hat{z}) = - G_1'(z,\zeta) = - G_1(z) + e_1(\zeta) \/, \]
which is different from the expression in \Cref{E:KTO1}.
\end{remark}

\begin{cor}\label{lemma:noneq-twopres} The rings 
$\widetilde{\QK}(\Gr(k;n))$ and $\widehat{\QK}(\Gr(k;n))$
are isomorphic to
\[ \widetilde{\QK}(\Gr(k;n)) = \Z[[q]][e_1, \ldots, e_k]/\widetilde{I} \/; \quad 
\widehat{\QK}(\Gr(k;n)) = \Z[[q]][e_1, \ldots, e_k]/\widehat{I} \/, \]
where the ideal $\widetilde{I}$ is defined by $\tilde{R}=E H$, i.e.,
\[ \begin{pmatrix} \tilde{g}_{n-k+1} \\ \tilde{g}_{n-k+2} \\ \vdots \\ \tilde{g}_{n} \end{pmatrix} =
\begin{pmatrix} -1 & 0 & \ldots & 0 \\ -e_1 & -1 & \ldots & 0 \\
\vdots & \vdots & \ddots & 0 \\
-e_{k-1} & -e_{k-2} & \ldots & -1 \end{pmatrix}
\begin{pmatrix} (-1)^{n-k+1} h_{n-k+1} \\ (-1)^{n-k+2} h_{n-k+2} \\ \vdots \\ (-1)^{n} h_{n} \end{pmatrix} 
\]
and the ideal $\widehat{I}$ is defined by $\hat{R}=E G+C^z_{\ge n-k+2}$, i.e., 
\[ \begin{pmatrix} \hat{g}_{n-k+1} \\ \hat{g}_{n-k+2} \\ \vdots \\ \hat{g}_{n} \end{pmatrix} =
\begin{pmatrix} -1 & 0 & \ldots & 0 \\ -e_1 & -1 & \ldots & 0 \\
\vdots & \vdots & \ddots & 0 \\
-e_{k-1} & -e_{k-2} & \ldots & -1 \end{pmatrix}
\begin{pmatrix} (-1)^{n-k+1} G_{n-k+1} \\ (-1)^{n-k+2} G_{n-k+2} \\ \vdots \\ (-1)^{n} G_{n} \end{pmatrix} 
+ \begin{pmatrix} c_{\ge n-k+2}^z \\ \vdots \\ c_{\ge n}^z\\ 0 \end{pmatrix}
\]
\end{cor}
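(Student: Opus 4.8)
The plan is to obtain \Cref{lemma:noneq-twopres} as the specialization of \Cref{lemma:twopres} at $T_i = 1$, equivalently $\zeta_i = 0$, for all $i$. There is a surjective ring homomorphism $\K_T(pt)[[q]] \twoheadrightarrow \Z[[q]]$ sending $T_i \mapsto 1$; since $\QK_T(\Gr(k;n))$ is free over $\K_T(pt)[[q]]$ on the Schubert basis and the genus-$0$ KGW structure constants restrict, under this map, to the non-equivariant ones, the base change $\QK_T(\Gr(k;n)) \otimes_{\K_T(pt)[[q]]} \Z[[q]]$ is the non-equivariant ring $\QK(\Gr(k;n))$. Applying the same base change to the two presentations of \Cref{lemma:twopres} — and using that $-\otimes_{\K_T(pt)[[q]]}\Z[[q]]$ is right exact, so that $\bigl(\K_T(pt)[[q]][e_1,\ldots,e_k]/\widehat I\bigr)\otimes \Z[[q]] = \Z[[q]][e_1,\ldots,e_k]/\widehat I_{\zeta=0}$ and similarly for $\widetilde I$ — it suffices to compute the $\zeta = 0$ specializations of the defining matrix identities.

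Next I would record the elementary specializations of the polynomials from \S\ref{sec:pres-iso}. Since $e_0 = 1$ and $e_a(0) = 0$ for $a \ge 1$, the definitions give $h'_j(z,0) = h_j(z)$ and $G'_j(z,0) = G_j(z)$, so the columns $H', G'$ specialize to $H, G$. The column $E_{n-k+1}^\zeta$ specializes to $0$. Using $c'_{\ge \ell}(z,0) = c_{\ge \ell+1}^z$ (noted after \eqref{E:cplambda}), the column $C_{\ge n-k+1}^{z,\zeta}$ specializes to $(c_{\ge n-k+2}^z,\ldots,c_{\ge n}^z,c_{\ge n+1}^z)^T$; its last entry vanishes because $c_{\ge j}^z = 0$ for $j > k$, so this is exactly the column $(c_{\ge n-k+2}^z,\ldots,c_{\ge n}^z,0)^T$ appearing in the statement. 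Finally, setting $\zeta = 0$ in \eqref{E:tildeg2} and in \Cref{lemma:gell} yields precisely the formulas for $\tilde g_\ell(z,q)$ and $\hat g_\ell(z,q)$ displayed at the start of \S\ref{sec:2dlimit}, the ranges $1 \le \ell \le n-k$ being handled automatically by the vanishing of the binomials $\binom{k}{\ell+k-n}$ and $\binom{k-1}{n-\ell}$ (for $\tilde g_\ell$ with $\ell=n-k$ one also uses $e_0(z)=1$). Substituting all of this into $\tilde R = E H' + E_{n-k+1}^\zeta$ and $\hat R = E G' + C_{\ge n-k+1}^{z,\zeta}$ produces exactly the ideals $\widetilde I$ and $\widehat I$ of \Cref{lemma:noneq-twopres}.

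The only mild subtlety — the closest thing to an obstacle — is in the first paragraph: verifying that the specialization $T_i \mapsto 1$ identifies the base-changed ring with the geometric $\QK(\Gr(k;n))$, so that the base change legitimately produces the \emph{non-equivariant} Coulomb branch and Whitney rings. This is standard, following from the freeness of $\QK_T(\Gr(k;n))$ over $\K_T(pt)[[q]]$ together with the functoriality of the KGW invariants under the restriction $\K_T(pt)\to\Z$. Alternatively, one can bypass \Cref{lemma:twopres} altogether and simply repeat its elimination-of-variables argument with $\zeta_i = 0$ throughout, replacing Propositions~\ref{prop:Etilde_h} and~\ref{prop:Ehat_groth} by their non-equivariant forms recorded in \eqref{E:cove}; the remaining work is the routine bookkeeping of the preceding paragraph.
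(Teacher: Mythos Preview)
Your proposal is correct and follows essentially the same approach as the paper: the corollary is stated in \S\ref{sec:2dlimit} without a separate proof, being simply the $\zeta=0$ specialization of \Cref{lemma:twopres}. Your careful verification of the specializations $H'\to H$, $G'\to G$, $E_{n-k+1}^\zeta\to 0$, and $C_{\ge n-k+1}^{z,\zeta}\to(c_{\ge n-k+2}^z,\ldots,c_{\ge n}^z,0)^T$ is exactly what is needed; note only that the base-change discussion of the geometric ring in your first paragraph is not strictly required, since the statement concerns the abstract rings $\widetilde{\QK}(\Gr(k;n))$ and $\widehat{\QK}(\Gr(k;n))$ themselves, and your alternative in the third paragraph (redo the elimination argument with $\zeta=0$) is in fact the more direct route.
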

\subsubsection{Projective spaces}
To illustrate both presentations we take two `opposite' examples, for the projective
space $\Gr(1;n)$ and its dual $\Gr(n-1;n)$. These are isomorphic manifolds, and their quantum
$\K$ rings are also isomorphic. But the isomorphism is highly non-trivial;
this is expected, given that the Grassmannians are realized as different GIT quotients. 
A worked out example for $\QK_T(\Gr(2;5)$ is included in Appendix \ref{app:g25}.

If $k=1$, then $G_n(z) = h_n(z) = z_1^n$. The presentations 
from \Cref{lemma:noneq-twopres} are
\[ \widehat{\QK}(\Gr(1;n)) = \Z[[q]][z]/\langle z^{n} - q \rangle = \widetilde{\QK}(\Gr(1;n)) \/. \]
Consider now $k=n-1$. From \eqref{E:Gj} it follows that 
the Grothendieck polynomials $G_j$ are
significantly more complicated than the polynomials $h_j$. To illustrate, consider
$\QK(\Gr(3;4))$. The $\QK$ Whitney presentation is
\[ \widetilde{\QK}(\Gr(3;4)) = \frac{\Z[[q]][e_1(z_1,z_2,z_3),e_2(z_1,z_2,z_3),e_3(z_1,z_2,z_3)]}{\langle \tilde{g}_2 + h_2, \tilde{g}_3 + e_1 h_2 - h_3, 
\tilde{g}_4 + e_2 h_2 - e_1 h_3 + h_4 \rangle } \]
where
\[ \tilde{g}_2 = \frac{q}{q-1}(1+e_1) (3-e_1) \/; \quad  \tilde{g}_3 = \frac{q}{q-1}(1+e_1)(3-e_2)\/; \quad 
\tilde{g}_4 = \frac{q}{q-1}(1+e_1)(1-e_3) \/. \]
The Coulomb presentation is 
\[ \widehat{\QK}(\Gr(3;4)) = \frac{\Z[[q]][e_1(z_1,z_2,z_3),e_2(z_1,z_2,z_3),e_3(z_1,z_2,z_3)]}{\langle \hat{g}_2 + G_2, \hat{g}_3 + e_1 G_2 - G_3- e_3, 
\hat{g}_4 + e_2 G_2 - e_1 G_3 + G_4 \rangle } \]
where $\hat{g}_2 = e_3 -  q, {\hat{g}_3} = -  2q, \hat{g}_4 = -  q$ and
 \[ \begin{split} G_2 = h_2 - s_{2,1}+ s_{2,1,1} \/; \quad G_3 = h_3 - s_{3,1}+ s_{3,1,1} \/;
\quad G_4  = h_4 - s_{4,1}+ s_{4,1,1} \/. \end{split} \]
%
\subsubsection{Relation to the Gorbounov and Korff presentation}\label{sec:GKpres}
In this section we recall the description of the non-equivariant quantum 
$\K$-theory ring $\QK(\Gr(k;n))$ from Gorbounov and Korff's paper 
\cite{Gorbounov:2014}, based on the Bethe Ansatz equations. 
Then we indicate an isomorphism between this and our 
non-equivariant Coulomb presentation $\widehat{\QK}(\Gr(k;n))$.

For a variable $\xi$, denote by $\ominus \xi= \frac{-\xi}{1-\xi}$. Consider a sequence of
indeterminates $E_1, \ldots , E_k, H_1, \ldots, H_{n-k}$, and extend this 
sequence by requiring
$E_0 = H_0 = 1$ and $H_{r} =0$ for $r > n-k$ and $E_{r}=0$ for $r > k$.
Consider the generating series
\[ H(\xi) = \sum_{r=0}^{n-k}(H_r - H_{r+1}) \xi^{n-k-r}  \/; 
\quad E(\xi)= \sum_{r=0}^{k}(E_r - E_{r+1}) \xi^{k-r} \/. \]
The following is stated in \cite[Theorem 1.1]{Gorbounov:2014}.
\begin{thm} [Gorbounov-Korff]
The non-equivariant quantum K theory ring of $\Gr(k;n)$ is generated by $H_1, \ldots, H_{n-k}$,
$E_1, \ldots, E_k$ with relations given by the coefficients of $\xi$ in the expansion of
        \begin{equation}\label{eq:GK1}
                H(\xi)E(\ominus \xi)\:=\: \left( \prod_{i=1}^k \ominus \xi \right) \xi^{n-k} (1-H_1) + q \/.
        \end{equation}
\end{thm}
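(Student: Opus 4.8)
The statement is quoted from \cite{Gorbounov:2014}; what we must supply is the identification of their presentation with ours, and for this the plan is to exhibit an explicit $\Z[[q]]$-algebra isomorphism between the ring defined by \eqref{eq:GK1} and the non-equivariant Coulomb branch ring $\widehat{\QK}(\Gr(k;n))$ of \eqref{eq:ring-phys}, in the reduced form of \Cref{lemma:noneq-twopres}. Since \Cref{thm:main-result} already identifies $\widehat{\QK}(\Gr(k;n))$ with the geometric ring $\QK(\Gr(k;n))$, this suffices. The candidate map $\varphi$ sends $E_r\mapsto e_r(z)$ (the $r$-th elementary symmetric polynomial in $z_1,\dots,z_k$, i.e.\ the single-column Grothendieck polynomial) and $H_r\mapsto G_r(z)$, the single-row Grothendieck polynomial; by the non-equivariant identity \eqref{E:cove} one has $G_r(z)\equiv(-1)^re_r(\hat z)$ modulo $\hat J$, so on the unreduced presentation \eqref{eq:ring-phys} this is the same assignment as $H_r\mapsto(-1)^re_r(\hat z)$. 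A first consistency check: under $\varphi$, formula \eqref{E:Gj} gives $1-H_1\mapsto 1-G_1(z)=\prod_{i=1}^k(1-z_i)$, exactly the factor $c^z$ occurring in $\hat g_n$; and at $q=0$ the relation \eqref{eq:GK1} degenerates to $H(\xi)E(\ominus\xi)=(\prod\ominus\xi)\,\xi^{n-k}(1-H_1)$, a presentation of the classical ring $\K(\Gr(k;n))$ (Lascoux), matching $\widehat{\QK}(\Gr(k;n))|_{q=0}=\K(\Gr(k;n))$.

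The heart of the proof is to check that $\varphi$ annihilates the Gorbounov--Korff relations, i.e.\ that every coefficient of a power of $\xi$ in $H(\xi)E(\ominus\xi)-(\prod_{i=1}^k\ominus\xi)\,\xi^{n-k}(1-H_1)-q$ lands in the ideal $\widehat I$. I would begin by simplifying the generating functions: the telescoping sums give $H(\xi)=\xi^{n-k}+(1-\xi)\sum_{r=1}^{n-k}H_r\xi^{n-k-r}$, and, using $1-\ominus\xi=(1-\xi)^{-1}$ together with $\ominus\xi=-\xi/(1-\xi)$, a parallel expression for $E(\ominus\xi)$ with common denominator $(1-\xi)^{k+1}$. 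Multiplying \eqref{eq:GK1} through by $(1-\xi)^{k+1}$ turns it into a genuine polynomial identity in $\xi$ over the coefficient ring. The claim to establish is that, after the substitution $E_r=e_r(z)$, $H_r=G_r(z)$ and the change of spectral variable relating $\xi$ to $\ominus\xi$, this polynomial identity becomes a unit multiple of the characteristic--polynomial relation $f(\xi,z,0,q)=0$ of \eqref{eq:char}--\eqref{eq:betheeq2}; equating coefficients of the powers of $\xi$ then reproduces exactly the Vieta relations \eqref{E:Vietazhatz}, i.e.\ the generators of $\widehat I$. The algebraic inputs are the Cauchy-type identity \eqref{E:cauchy-groth} for row Grothendieck polynomials (which converts $\sum_{i+j=\ell}e_i(z)G_j(z)$ into the truncated Chern polynomial appearing in $\hat g_\ell$), the explicit formula for $\hat g_\ell$ from \Cref{lemma:gell} (non-equivariantly $\hat g_\ell=c^z_{\ge\ell+1}+(-1)^{n+k}q\binom{k-1}{n-\ell}$) together with \Cref{prop:Ehat_groth}, and the telescoping identities above; the lone $+q$ on the right of \eqref{eq:GK1} is accounted for by the $q$-term of $\hat g_n$, up to a sign absorbed by the variable change. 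I expect this bookkeeping --- reconciling the spectral variable and the K-theoretic inverse $\ominus$ of \cite{Gorbounov:2014} with ours, and tracking the denominators $(1-\xi)$ introduced by $\ominus$ --- to be the only real obstacle, and it is worth verifying $\Gr(1;n)$ and $\Gr(2;5)$ first (cf.\ \Cref{lemma:noneq-twopres} and Appendix~\ref{app:g25}), where one checks directly that $\varphi$ carries \eqref{eq:GK1} to $z_1^n=q$, respectively to the relations of Appendix~\ref{app:g25}.

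Granting that $\varphi$ is a well-defined $\Z[[q]]$-algebra homomorphism, it is an isomorphism by the mechanism used throughout the paper. It is surjective because $\widehat{\QK}(\Gr(k;n))=\Z[[q]][e_1,\dots,e_k]/\widehat I$ is generated by the elements $e_r(z)=\varphi(E_r)$. Its reduction modulo $q$ is the classical isomorphism onto $\K(\Gr(k;n))$, hence bijective, and both source and target are complete and free over $\Z[[q]]$ --- the target of rank $\binom{n}{k}$ by \Cref{thm:main-result}. Therefore $\varphi$ is an isomorphism by the filtered Nakayama statement \Cref{prop:Nakiso}, exactly as in the proof of \Cref{thm:qkpres}. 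Conceptually, the two presentations agree because each is a symmetrization, with respect to the Weyl group $S_k$, of the same vacuum (Bethe Ansatz) system \eqref{eq:ordqkpredict}: \eqref{eq:GK1} packages it through the generating functions $H(\xi),E(\xi)$, whereas \eqref{eq:ring-phys} packages it through the characteristic polynomial \eqref{eq:char}, and the two packagings differ only by the invertible substitution above. Combined with \Cref{thm:main-result}, this yields the Gorbounov--Korff presentation.
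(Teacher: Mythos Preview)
The theorem itself is not proved in the paper: it is quoted from \cite[Theorem~1.1]{Gorbounov:2014} and used as a black box. What the paper does contribute is the Proposition immediately following, which exhibits an explicit isomorphism $\Xi:\QK^{GK}(\Gr(k;n))\to\widehat{\QK}(\Gr(k;n))$. Your proposal is really an attempt at that Proposition, not at the cited theorem, so the comparison should be made there.

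Your map differs from the paper's in one essential place, and the difference is an error rather than an alternative route. You send $E_r\mapsto e_r(z)$ and justify this by saying $e_r(z)$ \emph{is} the single-column Grothendieck polynomial. It is not: the paper's map is $\Xi(E_j)=G_{1^j}(z)$, and the two are related by the Lenart identity $e_\ell(z)=\sum_{j=\ell}^k\binom{j-1}{j-\ell}G_{1^j}(z)$ quoted in the paper's proof (from \cite[Thm.~2.2]{lenart:combinatorial}). This identity is exactly what makes the verification work: under $\Xi$, the bracketed combination $\sum_{s=j}^k\binom{s-1}{s-j}E_s$ appearing in \eqref{eq:GK2} collapses to $e_j(z)$, after which the Cauchy-type formula \eqref{E:cauchy-groth} (equivalently \Cref{lemma:cauchy-h}(b)) reduces the left-hand side of \eqref{eq:GK2} to the non-equivariant $\hat g_\ell$. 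Under your assignment $E_r\mapsto e_r(z)$ that collapse does not occur, and the bookkeeping you anticipate as ``the only real obstacle'' will not close. (Already for $k=2$ one has $G_1(z)=z_1+z_2-z_1z_2\neq e_1(z)$, so the two maps disagree on the very first generator.)

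The rest of your outline---surjectivity from the $e_r$'s, and the Nakayama/\Cref{prop:Nakiso} argument for injectivity once the $q=0$ reduction is known---matches the paper's methodology and is fine. The fix is simply to replace $E_r\mapsto e_r(z)$ by $E_r\mapsto G_{1^r}(z)$ and to invoke the Lenart identity above; with that change your sketch becomes the paper's proof.
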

Denote by $\QK^{GK}(\Gr(k;n))$ the Gorbounov and Korff's presentation.
A direct computation gives that
the coefficient of $\xi^\ell$ in \eqref{eq:GK1} is equal to 
\begin{multline}\label{eq:GK2}
                (-1)^k(H_{\ell} - H_{\ell+1}) + \sum_{j=1}^\ell (H_{\ell-j} - H_{\ell+1-j})\left[(-1)^{k-j}\sum_{s=j}^k\binom{s-1}{s-j}E_s\right] \\
                \:=\: \left\{\begin{array}{cl}
                        0 & 1\leq \ell \leq n-k-1, \\
                        (-1)^{n-\ell}q \binom{k}{n-\ell} & n-k\leq \ell \leq n \/.
                        \end{array}\right.
        \end{multline}
We provide next an algebra isomorphism
between the (non-equivariant) 
Coulomb branch presentation $\widehat{\QK}(\Gr(k;n))$
and the presentation $\QK^{GK}(\Gr(k;n))$. 
\begin{prop} Consider the map 
$\Xi: \QK^{GK}(\Gr(k;n)) \to \widehat{\QK}(\Gr(k;n))$
defined by 
\[ \Xi (H_j) = G_j(z) \textrm{ for } 1 \le j \le n-k \/; \quad \Xi(E_j) = G_{1^j}(z) \textrm{ for } 1 \le j \le k \/. \]
Then $\Xi$ is an algebra isomorphism.
\end{prop}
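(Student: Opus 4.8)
The plan is to prove that $\Xi$ is a well-defined ring homomorphism (i.e.~the Gorbounov--Korff relations map into the defining ideal $\hat{J}$ of $\widehat{\QK}(\Gr(k;n))$, as in \eqref{eq:ring-phys}) and then that it is bijective. I would begin by recording the classical expansion of a single-column Grothendieck polynomial,
\[ G_{1^a}(z) \: = \: \sum_{b \ge a} (-1)^{b-a} \binom{b-1}{a-1} e_b(z) \/, \]
(cf.~\cite{lenart:combinatorial}), whose inverse is $e_a(z) = \sum_{b \ge a} \binom{b-1}{a-1} G_{1^b}(z)$. The inverse relation shows at once that $\Xi$ is surjective, since $\widehat{\QK}(\Gr(k;n))$ is generated over $\Z[[q]]$ by $e_1(z), \ldots, e_k(z)$; and, using $\binom{s-1}{s-j} = \binom{s-1}{j-1}$, it shows that $\Xi$ carries the bracketed combination $(-1)^{k-j}\sum_{s=j}^{k}\binom{s-1}{s-j} E_s$ occurring in \eqref{eq:GK2} to $(-1)^{k-j} e_j(z)$.

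For well-definedness I would apply $\Xi$ to the generators of the Gorbounov--Korff ideal, i.e.~the relations \eqref{eq:GK2} for $1 \le \ell \le n$. Writing $\widehat{G}_m := G_m(z)$ for $0 \le m \le n-k$ and $\widehat{G}_m := 0$ for $m > n-k$ (this truncation accounting for $H_m = 0$ when $m > n-k$), the map sends the left-hand side of \eqref{eq:GK2} to $\sum_{j=0}^{\ell} (\widehat{G}_{\ell-j} - \widehat{G}_{\ell+1-j})(-1)^{k-j} e_j(z)$. By \eqref{E:cove} (the non-equivariant case of \Cref{prop:Ehat_groth}) one has $\widehat{G}_m \equiv (-1)^m e_m(\hat{z}) \bmod \hat{J}$ for every $m \ge 0$, both sides vanishing when $m > n-k$. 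Substituting and collecting signs, the expression becomes $(-1)^{\ell+k}\bigl(\sum_{i+j=\ell} e_i(z) e_j(\hat{z}) + \sum_{i+j=\ell+1} e_i(z) e_j(\hat{z}) - e_{\ell+1}(z)\bigr)$ modulo $\hat{J}$; applying the Vieta relations \eqref{E:Vietazhatz} that generate $\hat{J}$, this is congruent to $(-1)^{\ell+k}\bigl(\hat{g}_\ell + \hat{g}_{\ell+1} - e_{\ell+1}(z)\bigr)$. The proof then reduces to the purely polynomial identity $\hat{g}_\ell + \hat{g}_{\ell+1} - e_{\ell+1}(z) = (-1)^{n+k} q \binom{k}{n-\ell}$ in $\Z[[q]][e_1(z), \ldots, e_k(z)]$, which follows from the telescoping $c^z_{\ge \ell+1} + c^z_{\ge \ell+2} = e_{\ell+1}(z)$ and Pascal's rule $\binom{k-1}{n-\ell} + \binom{k-1}{n-\ell-1} = \binom{k}{n-\ell}$, together with the non-equivariant formula $\hat{g}_\ell = c^z_{\ge \ell+1} + (-1)^{n+k} q \binom{k-1}{n-\ell}$ recalled in \S\ref{sec:2dlimit}. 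Since $(-1)^{\ell+k}(-1)^{n+k} q \binom{k}{n-\ell} = (-1)^{n-\ell} q \binom{k}{n-\ell}$ is exactly the right-hand side of \eqref{eq:GK2}, the images of all these generators lie in $\hat{J}$; the remaining coefficients of \eqref{eq:GK1} (the constant term, which yields $H_{n-k} E_k = q$, and the coefficients of $\xi^\ell$ for $\ell > n$) are handled the same way, using the Vieta relation at $\ell = n$, namely $e_k(z) e_{n-k}(\hat{z}) \equiv (-1)^{n+k} q$, and a degree count.

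For bijectivity, surjectivity is already noted, and injectivity follows because both rings are free $\Z[[q]]$-modules of rank $\binom{n}{k}$ --- the target by \Cref{thm:main-result}, the source by \cite[Thm.~1.1]{Gorbounov:2014} --- so that, after composing with any $\Z[[q]]$-module isomorphism between them, $\Xi$ becomes a surjective endomorphism of a finitely generated module over the Noetherian ring $\Z[[q]]$, hence an isomorphism. Alternatively one may run the filtration argument of \Cref{thm:isomorphism}: with $\deg q = n$ and with $z_i$, $H_j$, $E_j$ placed in their natural degrees, $\Xi$ is filtration-preserving because the lowest-degree terms of $G_j(z)$ and of $G_{1^j}(z)$ are $h_j(z)$ and $e_j(z)$ respectively (by the expansions above), so on associated graded rings $\Xi$ induces the standard identification of the two presentations of $\QH^*(\Gr(k;n))$, both of which reduce to Witten's presentation (cf.~\S\ref{sec:2dlimit}), whence $\Xi$ is an isomorphism. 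I expect the main obstacle to be the well-definedness computation: matching the generating-function relation \eqref{eq:GK1} with the Vieta relations of the Coulomb presentation while correctly tracking the two incompatible truncation conventions ($H_m = 0$ for $m > n-k$ versus $G_m(z) \neq 0$ in general) and the proliferation of signs.
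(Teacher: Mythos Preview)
Your proposal is correct and follows essentially the same route as the paper's own proof, which merely sketches the argument: it cites the identity $e_\ell(z)=\sum_{j=\ell}^k\binom{j-1}{j-\ell}G_{1^j}(z)$ (exactly your inverse expansion) together with the non-equivariant Cauchy identity \Cref{lemma:cauchy-h}(b), and leaves the rest to the reader. Your only deviation is that, instead of invoking \Cref{lemma:cauchy-h}(b) directly, you pass through its consequence \eqref{E:cove}, namely $e_m(\hat z)\equiv(-1)^m G_m(z)$, and then apply the Vieta relations \eqref{E:Vietazhatz}; since \eqref{E:cove} is itself derived in the paper from that Cauchy identity, the two computations are equivalent. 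Your added arguments for surjectivity and injectivity (rank count over $\Z[[q]]$ or the filtration argument) are fine and more explicit than the paper, which focuses only on well-definedness.
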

\begin{proof} The main argument is showing that $\Xi$ is well defined.
To calculate the image of the left hand side of \eqref{eq:GK2}, 
one utilizes the polynomial identity
$e_\ell(z) =  \sum_{j=\ell}^k\binom{j-1}{j-\ell}G_{1^j}(z)$
(see, e.g., \cite[Thm. 2.2]{lenart:combinatorial}),
together with the (non-equivariant) Cauchy identity from 
\Cref{lemma:cauchy-h}(b).~We leave the details to the reader.
\end{proof}

\subsection{Quantum cohomology as a limit}
\label{app:2dlimit} Next we discuss how the quantum K theory reduces to the quantum cohomology.
Mathematically, this is achieved by taking leading terms, or, equivalently,
taking the associated graded rings.
In physics this corresponds to `taking a two-dimensional
limit' which we recall next, see \cite{Gu:2020zpg}. 
If we assume the theory is defined on a 3-manifold
$S^1 \times \Sigma$ for some Riemann surface $\Sigma$, where
$S^1$ has diameter $L$, then 
\begin{equation}\label{E:limits}
\begin{split}
	q &= L^n q_{2d} \/, \quad X_a = \exp\left(  L \sigma_a \right) = 
	1 + L \sigma_a + \frac{L^2}{2} {\sigma_a}^2 + \cdots, \\
	T_i &= \exp\left(  L m_i \right) = 1 + L m_i + \frac{L^2}{2} {m_i}^2 + \cdots \end{split}
\end{equation}
Here $q_{2d}$ is the quantum parameter in the $2d$ theory and $m_i$ 
are elements in $H^*_T(pt)$ (in physics terminology, the twisted masses).
The term $-L m_i$ corresponds to the cohomological parameter $t_i \in H^*_T(pt)$. 
We have 
\[ z_a = 1 - X_a = -L \sigma_a - \frac{L^2}{2} {\sigma_a}^2 - \ldots \/; \quad
\zeta_i =  1-T_i = -L m_i - \frac{L^2}{2} {m_i}^2 - \ldots \/. \]

In the two-dimensional limit, $L \rightarrow 0$ and the leading terms 
will dominate. We now take the $2d$ limit in Equation~(\ref{eq:betheeq})
arising from the Coulomb branch. After taking the leading terms,
\eqref{eq:betheeq} becomes
\begin{equation*}
	\prod_{i=1}^n\left(\sigma_a - m_i \right) = (-1)^{k-1} q_{2d}, \quad a = 1, \dots, k.
\end{equation*}
These are the generators which determine the chiral rings of the $2d$ gauge linear sigma 
model (GLSM) for $\Gr(k;n)$ when twisted masses $m_i$ are turned on. 
The $2d$ limits of the polynomials $\hat{g}_{\ell}(z,\zeta,q)$
are 
$\hat{g}_{\ell}(z,\zeta,q) \rightarrow L^\ell g_{\ell}^{2d}(\sigma,m,q_{2d})$,
with $\hat{g}_{\ell}^{2d}(z,m,q_{2d})$ given by
\begin{equation*}
	\hat{g}_{\ell}^{2d}(\sigma,m,q_{2d}) = e_{\ell}(m)+ (-1)^{n+k} q_{2d} \delta_{\ell, n},\quad \ell = 1,\ldots,n.
\end{equation*}
where $\delta_{\ell, n}$ is the Kronecker delta. Then the $2d$ limit of the Coulomb branch presentation
in \eqref{eq:ring-phys} is
\begin{equation}
\label{eq:2dphysring}
	\sum_{i+j=\ell} e_{i}(\sigma) e_{j}(\hat{\sigma}) = \hat{g}_{\ell}^{2d}(\sigma,m,q_{2d}),\quad \ell=1,\ldots,n \/.
\end{equation}
We may identify $\{-\sigma_a\}$ with Chern roots of $\cS$ and $\{-\hat{\sigma}_a\}$ with Chern 
roots of $\cQ$. This recovers the equivariant quantum Whitney relations from
\eqref{E:wittenrel}:
\begin{equation*}
	c^T(\cS) \star c^T(\cQ) = c^T(\C^n) + (-1)^{k} q_{2d}.
\end{equation*}
As we observed earlier, 
the $2d$ limits of the Coulomb branch and the Whitney presentations 
from \eqref{eq:ring-phys} and \eqref{eq:ring-math} coincide. Indeed, under the change of variables from \eqref{E:limits}
and after taking leading terms we obtain
\begin{equation*}
	\tilde{g}_{\ell}^{2d}(\sigma,m,q_{2d}) = e_{\ell}(m) + (-1)^{n-k} q_{2d} \delta_{\ell, n} = {g}_{\ell}^{2d}(\sigma,m,q_{2d}), \quad \ell = 1,\ldots,n.
\end{equation*}

\appendix


\section{Completions of filtered modules}\label{sec:filtered} For the convenience of the reader, in this appendix 
we gather some results about $I$-adic completions of modules.
We utilize them to deduce properties of the (equivariant) 
quantum K theory in terms of those of the ordinary (equivariant) K theory, but 
they may be of more general interest. 
We follow the terminology from \cite[Ch.~10]{AM:intro}.

From now on we will consider a commutative ring $R$ with a descending filtration 
by additive groups 
$R = R_0 \supset R_1 \supset \ldots$ such that  
$R_i \cdot R_j \subset R_{i+j}$. 
Consider a
filtered $R$-module $M$, i.e., $M$ is equipped with a filtration by additive groups 
$M = M_0 \supset M_1 \supset \ldots$ such that $R_i M_j \subset M_{i+j}$. 

Fix $I \subset R$ an ideal. This determines filtrations
$R_n:=I^n R$ and $M_n:=I^n M$ of $R$ respectively $M$. We 
denote by $\widehat{R}, \widehat{M}$ the $I$-adic completions.

If we assume in addition that $R$ is Noetherian 
then $\widehat{R}$ is Noetherian and it is equipped with an ideal 
$\widehat{I}\simeq I \otimes_R \widehat{R}$ included in the Jacobson radical of $\widehat{R}$ \cite[Prop.~10.15]{AM:intro}. For a finitely generated $R$-module $M$, 
there is an isomorphism 
\begin{equation} \label{E:completion} M \otimes_R \widehat{R} \simeq \widehat{M} \/; \end{equation} 
see \cite[Prop.~10.13]{AM:intro}. 
Furthermore, $\widehat{R}$ is a flat $R$-algebra, in particular the operation
$M \mapsto \widehat{M}$ preserves short exact sequences. Finally, if we 
assume that $M$ is equipped with another filtration compatible with $I$, that is, 
$I. M_n \subset  M_{n+1}$, then
\eqref{E:completion}
equips $\widehat{M}$ with a natural filtration induced from $M$ which is compatible with respect to $\widehat{I}$. 

We recall next a version of Nakayama's Lemma. 
\begin{lemma}\label{cor:gen} Let $R$ be a Noetherian ring, 
$I \subset R$ an ideal, 
and let $M$ be a finitely generated $\widehat{R}$-module.
Let $M' \subset M$ be a submodule such that 
\[ {M/M'}=\widehat{I}.({M/M'}) \/.\]
Then ${M}={M'}$ as $\widehat{R}$-modules. 
\end{lemma}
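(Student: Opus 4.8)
The plan is to reduce this to the classical Nakayama lemma applied to a finitely generated module over the Noetherian ring $\widehat{R}$. First I would observe that the hypothesis on $M'$ can be rephrased entirely in terms of the quotient module $N := M/M'$: we are given that $N = \widehat{I}\cdot N$, and we want to conclude $N = 0$, which is exactly the statement $M = M'$. Since $M$ is finitely generated over $\widehat{R}$ and $\widehat{R}$ is Noetherian (by \cite[Prop.~10.15]{AM:intro}, $\widehat{R}$ is Noetherian whenever $R$ is), the quotient $N$ is also a finitely generated $\widehat{R}$-module.

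The key point is then that $\widehat{I}$ sits inside the Jacobson radical of $\widehat{R}$. This is precisely the content of \cite[Prop.~10.15]{AM:intro}: for the $I$-adic completion of a Noetherian ring, the ideal $\widehat{I} \simeq I\otimes_R \widehat{R}$ is contained in the Jacobson radical of $\widehat{R}$. (Intuitively, $1 - x$ is a unit in $\widehat{R}$ for every $x \in \widehat{I}$, because the geometric series $1 + x + x^2 + \cdots$ converges $\widehat{I}$-adically.) With this in hand, the hypothesis $N = \widehat{I}\cdot N$ together with the classical Nakayama lemma — in the form: if $N$ is a finitely generated module over a ring $S$, $\mathfrak{a} \subseteq S$ an ideal contained in the Jacobson radical, and $N = \mathfrak{a} N$, then $N = 0$ (see \cite[Cor.~2.5 and Prop.~2.6]{AM:intro}) — forces $N = 0$, i.e.\ $M = M'$.

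I do not expect any genuine obstacle here; the statement is essentially a packaging of standard commutative algebra, and the only subtlety is the one already flagged in the Acknowledgments, namely that one must work over $\widehat{R}$ (or a localization in which the elements of $1 + I$ are inverted) rather than over $R$ itself — otherwise $I$ need not lie in the Jacobson radical of $R$ and the conclusion can fail. Concretely, the proof is: set $N = M/M'$; note $N$ is finitely generated over $\widehat{R}$ since $\widehat{R}$ is Noetherian and $M$ is finitely generated; invoke $\widehat{I} \subseteq \mathrm{Jac}(\widehat{R})$ from \cite[Prop.~10.15]{AM:intro}; apply classical Nakayama to $N = \widehat{I} N$ to get $N = 0$; conclude $M = M'$ as $\widehat{R}$-modules. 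The one place requiring a word of care is confirming that $N$ really is finitely generated \emph{as an $\widehat{R}$-module} (not merely as an $R$-module), but this is immediate: $N$ is a quotient of the finitely generated $\widehat{R}$-module $M$.
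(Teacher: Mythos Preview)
Your proposal is correct and follows exactly the same route as the paper: reduce to the ordinary Nakayama lemma \cite[Prop.~2.6]{AM:intro} using that $\widehat{I}$ lies in the Jacobson radical of $\widehat{R}$ \cite[Prop.~10.15]{AM:intro}. The paper's proof is a one-line invocation of these two facts; your version spells out the same argument in more detail (passing to $N=M/M'$, checking finite generation of $N$), but there is no substantive difference.
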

\begin{proof} The claim follows from the ordinary
Nakayama Lemma \cite[Prop.~2.6]{AM:intro}, utilizing that $\widehat{I}$ is included in the Jacobson radical
of $\widehat{R}$. 
\end{proof}

\begin{lemma}\label{lemma:basis} Let $M$ be a free $R$-module of rank $p$ where $R$ is an integral domain. 
Then any set of $R$-module generators $m_1, ..., m_p \in M$ of cardinality $p$ forms a basis.\end{lemma}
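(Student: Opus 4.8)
The plan is to exhibit a surjective $R$-linear endomorphism of $M$ and deduce it is an isomorphism, using that $R$ is a domain. First I would fix a basis $b_1,\dots,b_p$ of $M$, and consider the generators $m_1,\dots,m_p$. Since the $m_i$ generate, write $m_i = \sum_j a_{ij} b_j$ for a matrix $A=(a_{ij})\in M_p(R)$; equivalently, define the $R$-linear map $\varphi\colon M\to M$ sending $b_i\mapsto m_i$, whose matrix in the basis $(b_i)$ is (the transpose of) $A$. Because the $m_i$ generate $M$, the map $\varphi$ is surjective.

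The heart of the argument is then to show that a surjective endomorphism of a finitely generated module over a commutative ring is injective. I would invoke the standard Cayley–Hamilton / determinant-trick result (e.g.\ the argument behind \cite[Prop.~2.4 and its corollaries]{AM:intro}): regard $M$ as a module over the polynomial ring $R[t]$ with $t$ acting by $\varphi$; surjectivity of $\varphi$ gives $M = tM = \langle t\rangle M$, so by the determinant trick there is a polynomial relation $(1 + t r(t))\cdot M = 0$ for some $r(t)\in R[t]$, i.e.\ $\mathrm{id}_M + r(\varphi)\circ\varphi = 0$ as endomorphisms of $M$. Hence $\psi := -r(\varphi)$ satisfies $\psi\circ\varphi = \mathrm{id}_M$, so $\varphi$ is also injective, therefore an isomorphism.

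Finally, injectivity of $\varphi$ translates back into the statement that $(m_1,\dots,m_p)$ is a basis: it is a generating set by hypothesis, and if $\sum_i c_i m_i = 0$ then $\varphi(\sum_i c_i b_i) = 0$, so $\sum_i c_i b_i = 0$, whence all $c_i = 0$ since $(b_i)$ is a basis. Thus $(m_i)$ is linearly independent and spans, i.e.\ is a basis. I expect the only subtle point to be the citation of the determinant trick in the exact form "surjective endomorphism of a finitely generated module over a commutative ring is an isomorphism"; note this holds for \emph{any} commutative ring, so the hypothesis that $R$ is an integral domain is not even strictly needed for this formulation — though if one instead prefers a rank/localization argument (pass to $\Frac(R)$, compare dimensions of vector spaces, then argue $\det A$ is a unit), the domain hypothesis becomes the natural setting and that is the route I would actually write out if a self-contained proof is wanted.
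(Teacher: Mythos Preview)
Your proposal is correct, but your primary route differs from the paper's. You argue via the determinant trick that a surjective endomorphism of a finitely generated module over any commutative ring is injective; this is valid and, as you note, does not even require $R$ to be a domain. The paper instead takes the fraction-field route you mention at the end: it tensors the surjection $R^p \to M$ with $K = \Frac(R)$, uses flatness of $K$ over $R$ and a dimension count to see that $m_1 \otimes 1, \ldots, m_p \otimes 1$ form a $K$-basis of $M \otimes_R K$, and then reads off $R$-linear independence of the $m_i$ from the injectivity of $M \to M \otimes_R K$ (which is where the domain hypothesis is actually used). Your Cayley--Hamilton argument is more general and conceptually cleaner, at the cost of invoking a result one step removed from \cite[Prop.~2.4]{AM:intro}; the paper's argument is more hands-on and explains why the integral-domain hypothesis appears in the statement.
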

\begin{proof} Let $K$ be the fraction field of $R$. Then $M \otimes_R K$ is a $K$-vector space of dimension $p$. 
Consider the map $\Phi: K^p \to M$ given by $\Phi(e_i) = m_i$, where $e_i$ is the natural basis for $K^p$. 
We have an exact sequence 
\[ 0 \to \ker(\Phi) \to K^p \to M \to 0 \/. \] 
Now $K$ is a flat $R$-module (see \cite[Cor.~3.6]{AM:intro}), and therefore we have an exact sequence of $K$-vector spaces 
\[ 0 \to \ker(\Phi) \otimes_R K \to K^p \otimes_R K \to M \otimes_R K \to 0 \/. \]
Since $\dim_K M \otimes_R K = p$, the last map is an isomorphism. The hypothesis implies that $m_1 \otimes 1, ..., m_p \otimes 1$ is a generating set for the $K$-vector space, 
therefore it is also a basis. If $c_1 m_1 + ... + c_p m_p = 0$ in $M$ then 
$c_1 m_1 \otimes 1 + ... + c_p m_p \otimes 1 = 0$, 
from where we get that $c_i = 0$, which finishes the proof. 
\end{proof}
\begin{prop}\label{prop:Nakiso} Let $R$ be a Noetherian integral domain, 
and let $I \subset R$ be an ideal.~Assume that $R$ is complete in the $I$-adic topology.
Let $M,N$ be finitely generated $R$-modules.

Assume that the $R$-module $N$, and the $R/I$-module $N/IN$, 
are both free modules 
of the same rank $p< \infty$, and
that we are given  
an $R$-module homomorphism $f: M \to N$ such that
the induced $R/I$-module map 
$\overline{f}: M/I M \to N / IN$
is an isomorphism of $R/I$-modules.

Then $f$ is an isomorphism.  
\end{prop}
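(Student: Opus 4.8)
The plan is to reduce the statement to the version of Nakayama's lemma already recorded as \Cref{cor:gen}. First I would prove surjectivity of $f$. Let $C = N/\mathrm{im}(f)$; this is a finitely generated $R$-module because $N$ is. The hypothesis that $\overline{f}\colon M/IM \to N/IN$ is surjective says precisely that $C = IC$, equivalently $C = \widehat{I}\cdot C$ since $R = \widehat{R}$ is already $I$-adically complete (so $\widehat{I} = I$). Then \Cref{cor:gen}, applied with the pair $(N, \mathrm{im}(f))$ in place of $(M, M')$, gives $\mathrm{im}(f) = N$, i.e. $f$ is surjective.

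Next I would upgrade the surjection to an isomorphism using the freeness and finite-rank hypotheses. Since $f\colon M \to N$ is surjective and $N$ is projective (being free), the surjection splits: $M \cong N \oplus K$ where $K = \ker(f)$. Tensoring with $R/I$ and using that $\overline{f}$ is \emph{injective} as well, we get $K/IK = 0$, i.e. $K = IK = \widehat{I}\cdot K$. Now $K$ is a direct summand of the finitely generated module $M$, hence itself finitely generated, so a second application of \Cref{cor:gen} (with $M' = 0$) forces $K = 0$. Therefore $f$ is injective as well, and hence an isomorphism.

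An alternative, slightly more hands-on route that avoids invoking projectivity of $N$: once surjectivity of $f$ is established, choose elements $m_1,\dots,m_p \in M$ whose images $f(m_1),\dots,f(m_p)$ form an $R$-basis of $N$ (possible since $N$ is free of rank $p$ and $f$ is onto — lift a basis). These generate $M$: indeed their images in $M/IM$ map under the isomorphism $\overline{f}$ to a basis of $N/IN$, hence span $M/IM$, and then \Cref{cor:gen} applied to the submodule $M' = \langle m_1,\dots,m_p\rangle \subset M$ (using $M/M' = \widehat{I}\cdot(M/M')$) gives $M' = M$. So $M$ is generated by $p$ elements; but is $M$ free of rank $p$? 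This is where I would need a little care, and the main obstacle lies here: $M$ is only assumed finitely generated, not free. However, the map $f$ being surjective with $N$ free means $M \cong N \oplus \ker f$ regardless, so one cannot avoid the splitting argument entirely. The cleanest packaging is therefore the one in the previous paragraph: split, reduce mod $I$, and apply \Cref{cor:gen} to the kernel.

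I expect the only genuinely substantive point to be the \emph{injectivity} half — surjectivity is an immediate application of \Cref{cor:gen}, whereas injectivity requires one to produce a complement to $\ker f$ (via freeness/projectivity of $N$) so that $\ker f$ inherits finite generation and one can feed it back into Nakayama. Everything else is bookkeeping: checking $\widehat{I} = I$ under the completeness assumption, and checking that cokernel and kernel are finitely generated over the Noetherian ring $R$.
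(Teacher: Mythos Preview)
Your proof is correct. Both halves go through as written: surjectivity via Nakayama on the cokernel, and injectivity by splitting off the free module $N$ and applying Nakayama to the kernel.

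The paper takes a somewhat different route, closer to your ``alternative'' sketch. It lifts a basis of $M/IM \cong N/IN$ to elements $m_1,\dots,m_p \in M$, uses \Cref{cor:gen} to see that these generate $M$ and that $f(m_1),\dots,f(m_p)$ generate $N$, and then invokes \Cref{lemma:basis} (any $p$ generators of a free rank-$p$ module over an integral domain form a basis) to conclude that the $f(m_i)$ are a basis of $N$. Injectivity then follows because a relation $\sum c_i m_i = 0$ maps to $\sum c_i f(m_i) = 0$, forcing all $c_i = 0$. Your splitting argument is arguably cleaner and in fact more general: it never uses the hypothesis that $R$ is an integral domain, whereas the paper's route needs it for \Cref{lemma:basis} (passing to the fraction field). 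What the paper's approach buys is a slightly more explicit description of $M$ --- it exhibits $m_1,\dots,m_p$ as a generating set and effectively shows $M$ is free --- at the cost of the extra hypothesis.
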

\begin{proof} We start by observing that since $R$ is complete in the $I$-adic topology,
$\widehat{R} \simeq R$ and $\widehat{I} \simeq I$; cf.~\cite[Prop.~10.5 and Prop.~10.15]{AM:intro}.
Let $m_1, \ldots, m_p \in M$ be any lifts of a basis of the free $R/I$-module
$M/IM \simeq N/IN$. By \Cref{cor:gen} applied to the modules $M$ and $M' = \langle m_1, \ldots, m_p \rangle$, the elements $m_1, \ldots, m_p$ 
generate $M$, and similarly the elements $f(m_1), \ldots, f(m_p)$ 
generate $N$. Since $N$ is free of rank $p$, it follows from \Cref{lemma:basis}
that $f(m_1), \ldots, f(m_p)$ is a basis over $R$. In particular, $f$ must be surjective. 
To prove that $f$ is an isomorphism, it suffices to show that 
$m_1, \ldots, m_p$ are linearly independent over $R$. 
If $\sum c_i m_i = 0$ with $c_i \in R$, taking the image under $f$ and using
that $\{f(m_i)\}$ form a basis implies that $c_i = 0$. This finishes the proof. 
\end{proof}

{\begin{remark}\label{rmk:Eis-fg} A useful criterion for finite generation of a module 
is provided in \cite[Ex.~7.4]{eisenbud:CAbook}.
Assume that $R \subset S$ are Noetherian rings such that
$R$ is complete with respect to an ideal $\mathfrak{m} \subset S$, and that
$\mathfrak{m}$ is contained in the Jacobson radical of $S$.
If $M$ is a finitely generated $S$-module and $M/\mathfrak{m}M$ is a finitely generated
$R/\mathfrak{m}$-module, then $M$ is a finitely generated $R$-module.
See also \cite[Appendix A]{GMSXZZ:whitney-math} for more details.
\end{remark}}

\begin{remark}\label{rmk:filtration} By \cite[Prop.~10.24]{AM:intro} the hypothesis that $M,N$ are finitely generated 
may be deduced if we know that $R$ is $I$-complete, and $M,N$ are equipped with good and separated filtrations compatible with respect to $I$.~(Recall that a filtration of $M$ is good if the associated graded $\mathrm{gr} M = \bigoplus_i M_i/M_{i+1}$ 
is a finitely generated $\mathrm{gr} R$-module, and it is separated if $\bigcap_i M_i = 0$.)\end{remark}

\begin{remark} If $f: M \to N$ is a filtered homomorphism of $R$-modules (i.e. $f(M_i) \subset N_i$)
which induces an isomorphism $\mathrm{gr} f: \mathrm{gr} M \to \mathrm{gr} N$ of graded
$\mathrm{gr} R$-modules, then the induced map between the completions 
$\widehat{f}: \widehat{M} \to \widehat{N}$ is an isomorphism \cite[Lemma 10.23]{AM:intro}.
This is an alternative to \Cref{prop:Nakiso}.
\end{remark}
\begin{remark}\label{rmk:1+q} Instead of working with completions in \Cref{cor:gen} and \Cref{prop:Nakiso}, 
one may
work with the localizations with respect to the multiplicative set $S= 1 + I $. The hypothesis that
$R$ is Noetherian is not needed in this case, and
the ideal $S^{-1}I$
is included in the Jacobson radical of $S^{-1} R$; cf.~\cite[Ch.~3, Ex.~2]{AM:intro}.
If $R$ is Noetherian, then $S^{-1} R \hookrightarrow \widehat{R}$ is a subring \cite[Remark, p.~110]{AM:intro}.
\end{remark} 


\section{Example: Equivariant quantum K theory of $\Gr(2;5)$}
\label{app:g25}
In this section, we illustrate the Whitney and Coulomb branch 
presentations for $\QK_T(\Gr(2;5))$. The $\QK$ Whitney 
relations are obtained by equating powers of $y$ in 
\begin{equation}
\label{eq:ex25lambda}
\begin{split} (1+yX_1)(1+yX_2)& (1+y\tilde{X}_1)(1+y\tilde{X}_2)(1+y\tilde{X}_3)= \\ 
&\prod_{i=1}^5 (1+T_i y) - \frac{q}{1-q} y^{4}
	\tilde{X}_1 \tilde{X}_2 \tilde{X}_3 (X_1+X_2 + y X_1 X_2) \/.\end{split} 
\end{equation}
Under the changes of variable $X_i = 1-z_i$, ($1 \le i \le 2$), $\tilde{X}_j = 1-\tilde{z}_j$,
($1 \le j \le 3$), $T_s \equiv 1 - \zeta_s$ ($1 \le s \le 5$), the $\QK$ Whitney relations become
\begin{equation*} 
\sum_{i+j=\ell} e_i(z) e_j(\tilde{z}) \equiv e_\ell(\zeta) - 
\frac{q}{1-q} \left( \delta_{\ell,4}\Delta_1 + \delta_{\ell,5} (\Delta_1 - \Delta_2) \right) \/,
\end{equation*}
for $1 \le \ell \le 5$, with 
\[\begin{split} \Delta_1 = e_1(1-z_1, 1-z_2)(1-\tilde{z}_1)(1-\tilde{z}_2)(1-\tilde{z}_3)\/; \\
\Delta_2 = e_2(1-z_1, 1-z_2)(1-\tilde{z}_1)(1-\tilde{z}_2)(1-\tilde{z}_3) \/. \end{split}\]
One may solve for $e_i(\tilde{z})$ in terms of $e_i(z)$ to obtain:
\[ \begin{split} e_1(\tilde{z}) &\equiv e_1(\zeta) - h_1(z) = - h_1'(z,\zeta)\/; \\
e_2(\tilde{z}) & \equiv e_2(\zeta) - h_1(z) e_1(\zeta) + h_2(z) = - h_2'(z,\zeta) \/;\\
e_3(\tilde{z})& \equiv e_3(\zeta) - h_1(z) e_2(\zeta)+ h_2(z) e_1(\zeta) - h_3(z) = - h_3'(z,\zeta) \/. \end{split}
\]
This allows the elimination of the variables $\tilde{z}$. The remaining two relations,
and the Cauchy formula from \Cref{lemma:cauchy-h}
give the generators for the ideal $\widetilde{I}$:
\[ \begin{split} e_4(\zeta) - h_4'(z,\zeta) & = -e_1(z) h_3'(z,\zeta) + e_2(z) h_2'(z,\zeta) \\
& \equiv e_4(\zeta) -\frac{q}{1-q}(1+h_1(z)+h_2(z))(2-e_1(z)) \/. \end{split} \]
\[ \begin{split} e_5(\zeta) + h_5'(z,\zeta) - e_1(z) h_4'(z,\zeta) & = - e_2(z) h_3'(z,\zeta) \\
& \equiv e_5(\zeta) -\frac{q}{1-q}(1+h_1(z)+h_2(z))(1-e_2(z)) \/. \end{split} \]

We now turn to the Coulomb branch relations for $\QK_T(\Gr(2;5))$. The Vieta relations give
\begin{equation*}
\label{eq:g25physring}
	\sum_{i+j=\ell }e_i(z) e_j(\hat{z}) \equiv \hat{g}_\ell(z,\zeta,q) \quad \/, 
\end{equation*}
for $ 1 \le \ell \le 5$, where the polynomials $\hat{g}_{\ell}(z,\zeta,q)$ are given by
\begin{align*}
	\hat{g}_{\ell} (z,\zeta) &\:=\: e_\ell(\zeta) + e_{\ell-1}(\zeta) e_2(z),\quad \ell = 1, 2, 3, \\
	\hat{g}_4(z,\zeta) &\:=\: e_4(\zeta) + e_3(\zeta) e_2 (z) - e_5(\zeta) - q \sum_{s=0}^5 (-1)^s e_s(\zeta), \\
	\hat{g}_5(z,\zeta) &\:=\: e_5(\zeta) + e_2(z)e_4(\zeta)  - e_1(z) e_5(\zeta) - q \sum_{s=0}^5 (-1)^s e_s(\zeta).
\end{align*}
One may solve for $e_i(\tilde{z})$ in terms of $e_i(z)$ to obtain:
\[ \begin{split} e_1(\hat{z}) &= e_1(\zeta) - G_1(z) = - G_1'(z,\zeta)\/; \\
e_2(\hat{z}) &= e_2(\zeta) - G_1(z) e_1(\zeta) + G_2(z) = G_2'(z,\zeta) \/;\\
e_3(\hat{z})& = e_3(\zeta) - G_1(z) e_2(\zeta)+ G_2(z) e_1(\zeta) - G_3(z) = - G_3'(z,\zeta) \/. 
\end{split} \]
Here $G_i(z)$ are the Grothendieck polynomials, given by 
\[ \begin{split} G_1(z) & = h_1 - e_2= z_1 + z_2 - z_1 z_2 \/; \\ 
G_2(z) & = h_2 - s_{2,1}= z_1^2+z_1 z_2+z_2^2-z_1^2 z_2-z_1 z_2^2 \/; \\
G_3(z) & = h_3 - s_{3,1} = z_1^3+z_1^2 z_2+z_1 z_2^2+z_2^3-z_1^3 z_2-z_1^2 z_2^2-z_1 z_2^3 \/. 
\end{split}
\] 
After eliminating the variables $\hat{z}$, the remaining two relations,
and the Cauchy formula from \Cref{lemma:cauchy-h}(b)
give the generators for the ideal $\widehat{I}$ of $\widehat{\QK}_T(\Gr(2;5))$: 
\[ \begin{split} \hat{g}_4(z,\zeta,q) & \equiv - G_4'(z,\zeta) + c_{\ge 4}'=
 - G_4'(z,\zeta)+ e_4(\zeta) +e_3(\zeta) e_2(z) \\
\hat{g}_5(z,\zeta,q) & \equiv -e_1(z) G_4'(z,\zeta) + G_5'(z,\zeta) + c_{\ge 5}'
\\ & =-e_1(z) G_4'(z,\zeta) + G_5'(z,\zeta) + e_5(\zeta) + e_4(\zeta) e_2(z) 
\/, \end{split} \]
with the equivariant Grothendieck polynomials defined by 
\[ \begin{split} G_4'(z,\zeta) & = G_4(z) - G_3(z) e_1(\zeta) + G_2(z) e_2(\zeta) - G_1(z)e_3(\zeta) + e_4(\zeta) \/; \\ G_5'(z,\zeta) & = G_5(z) - G_4(z) e_1(\zeta) + G_3(z) e_2(\zeta) - G_2(z)e_3(\zeta) + G_1(z) e_4(\zeta) - e_5(\zeta) \/, \end{split}\] 
and $G_4(z), G_5(z)$ are given by \eqref{E:Gj}.


\bibliographystyle{alpha}
\bibliography{biblio.bib}

\end{document}